\newenvironment{customthm}[1]
  {\innercustomthm}
  {\endinnercustomthm}
\newcommand{\defn}[1]{\textcolor{blue}{\emph{#1}}}
\DeclareMathOperator{\sing}{Sing}
\newcommand{\RR}{\mathbb R}
\newcommand{\EE}{\mathbb E}
\newcommand{\R}{\mathbb R}
\newcommand{\bna}{\begin{eqnarray}}
\newcommand{\ena}{\end{eqnarray}}
\newcommand{\ba}{\begin{eqnarray*}}
\newcommand{\ea}{\end{eqnarray*}}
\newcommand{\bs}[1]{}
\newcommand{\edgecard}{N}
\newtheorem{theorem}{Theorem}[section]
\newtheorem{lemma}[theorem]{Lemma}
\newtheorem{proposition}[theorem]{Proposition}
\newtheorem{remark}[theorem]{Remark}
\newtheorem{definition}[theorem]{Definition}
\DeclareMathOperator{\real}{Real}
\DeclareMathOperator{\Dim}{Dim}
\newcommand{\ra}{\rangle}
\newcommand{\la}{\langle}
\newcommand{\CC}{{\mathbb C}}
\newcommand{\CS}{{\mathcal S}}
\newcommand{\QQ}{{\mathbb Q}}
\newcommand{\QQB}{{\overline{\mathbb Q}}}
\newcommand{\bl}{{\bf l}}
\newcommand{\LL}{{L_{2,4}}}
\newcommand{\LN}{{L_{d,n}}}
\def\p{{\bf p}}
\def\pn{\p =(\p_1, \dots, \p_{n}) }
\def\q{{\bf q}}
\def\bm{{\bf m}}
\def\v{{\bf v}}
\def\w{{\bf w}}
\def\e{{\bf e}}
\def\r{{\bf r}}
\def\E{{\bf E}}
\def\x{{\bf x}}
\def\M{{\bf M}}
\def\I{{\bf I}}
\def\N{{\bf N}}
\def\A{{\bf A}}
\def\B{{\bf B}}
\def\S{{\bf S}}
\def\K{{\bf K}}
\def\P{{\bf P}}
\newcommand{\genericpoint}{\bl}
\title{
Trilateration using  Unlabeled 
Path or Loop Lengths}
\author{
Ioannis Gkioulekas, 
Steven J. Gortler,
Louis Theran,
and 
Todd Zickler}
\date{}
\begin{document}
\maketitle 

\begin{abstract}
Let $\p$ be a configuration of $n$ points in $\RR^d$ for some 
$n$ and some
$d \ge 2$. Each pair of points defines an edge, which has a Euclidean
length in the configuration. A path is an ordered sequence of the
points, and a loop is a path that begins and ends at the same point. A path or loop, as a sequence of edges, also has a Euclidean length, which is simply the sum of its Euclidean edge lengths.
We are interested in reconstructing $\p$ given a set of edge, path and loop lengths.
In particular, we consider the unlabeled setting where the lengths are
given simply as a set of real numbers, and are 
not labeled with the combinatorial data describing which paths 
or loops gave rise to these lengths.
In this paper, 
we study the question of when $\p$ will be uniquely determined (up to an
unknowable Euclidean transform) from some given set of path or loop 
lengths through an exhaustive trilateration process. 

Such a process has  already been used for the simpler problem
of reconstruction using unlabeled edge lengths. This paper also provides a 
complete proof that this process must work in that edge-setting when
given a sufficiently rich set of edge measurements and assuming that $\p$
is generic.
\end{abstract}

%%%%%%%%%%%%%%%%
%\newpage
%%%%%%%%%%%%%%%%
\section{Introduction} 
\label{sec:intro}

%\subsection{Setup and motivation}
We are motivated by the following signal processing
scenario.
Suppose there is a ``configuration'' $\pn$ of $n$ points 
in, say, $\RR^2$ or $\RR^3$. Let a ``path'' be a finite sequence of these points, 
and a ``loop'' be a path that begins and ends at the same point. (We will use these terms to refer to the formal graph-theoretic notions of a ``walk'' and a ``closed walk''.) Each such path or loop in $\p$ has a Euclidean length.

\begin{figure}[b]
	\centering
	\def\svgwidth{4in}
	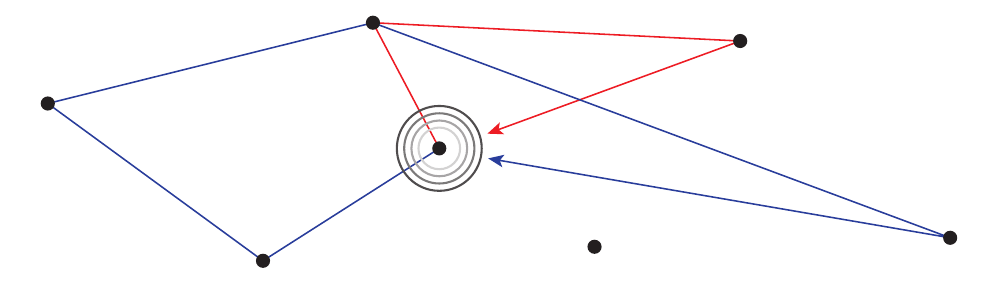
	\caption{An emitter-receiver at point $\p_1$ emits an omnidirectional pulse that bounces among points $\p_i$. The same emitter-receiver records the arrival times of pulse fronts that eventually return. These arrival times measure the Euclidean lengths of loops that begin and end at $\p_1$. 
 %They do not include any labeling information about the number of bounces or the points that were visited in each loop.
}\label{fig:intro}
\end{figure}

Let $\p_1$ be a distinguished point. In our scenario, 
it may represent the location of an omnidirectional emitter and receiver of sound or radiation. Let the other points in $\p$ represent the positions of small objects that behave as omnidirectional scatterers.

An omnidirectional pulse is emitted from $\p_1$ and travels outward at, say, unit speed. Whenever the pulse front encounters an object $\p_i$, an additional omnidirectional pulse is created there through scattering. Pulses continue to bounce around in this manner, and the receiver at $\p_1$ records the arrival times of the pulse fronts that return. We allow for the possibility that some pulse fronts might vanish or not be measurable back at $\p_1$.

By recording the times of flight between emission and reception, we effectively measure the lengths of loops traveled. In the case of light, these are travel times of photons that leave $\p_1$ and return after one or more bounces. In the case of sound, these are delays of direct or indirect echoes.

Each recorded length measurement is a single real number $v$. Importantly, we do not obtain any \emph{labeling} information about which points were visited or how many bounces occurred during the loop. 
We also do not obtain any information about the direction from which energy arrives. 

We wish to understand if we can recover the point configuration (up to Euclidean congruence) from a sufficiently rich sequence of \emph{unlabeled} loop measurements.  Once the loop measurements are labeled, the 
reconstruction problem is closely related to the well-studied 
graph realization (or ``distance geometry'') problem \cite{survey}.
Various techniques work well in practice for sufficiently rich inputs.
The primary difficulty in our settings arises from the lack of labeling.

Having no combinatorial information appears, at first sight, 
to be a very daunting problem.
Our first insight is to apply the notion of trilateration, which has been proposed as a method to recover a molecular shape from unlabeled inter-atomic distances~\cite{dux2}.  
As a side
contribution of this paper,
we provide a complete
proof that ``unlabeled trilateration'' 
must work given 
a sufficiently rich set of such distances as well as 
a genericity assumption on $\p$.

In our scenario, trilateration lets us decompose our labeling problem into a sequence of smaller ones, but in applying this technique to our problem we find a crucial distinction from the molecular one. In molecular applications,
one relies on the assumption that each measurement arises due to some atomic pair in the configuration (represented as an edge between two points). The story changes if
we additionally allow for the possibility of 
path-length measurements to be mixed in with edge-length measurements, because path-length measurements could perhaps be incorrectly interpreted as edge-lengths and thus confuse such a method. 
Our second insight is that we can characterize the effect of such additional path-length data by studying a novel algebraic variety that we call the ``unsquared measurement variety''.
Briefly, path-data that we could confuse for edge-data must arise 
from a linear automorphism of this variety.  Since 
we have classified such linear automorphisms~\cite{loopsAlg}, we can understand
the effect that extra path-length data has on our ability to recover a configuration.

Using these insights, 
we will prove in this paper that if $\p$ is a ``generic'' 
point configuration in $\R^d$ for $d\ge 2$, and we measure the lengths of a
sufficiently rich set of loops, namely
one that ``allows for trilateration'' (formally defined later),
then the configuration $\p$ is uniquely
determined from these measurements up to congruence.  Moreover, this
leads to an algorithm, under a real computation model~\cite{bss},
to calculate $\p$ from such data.  
The assumption of genericity (defined later) 
roughly means that while there are some special $\p$ where
these conclusions do not hold, these special cases
are very rare. In deriving our results, we will not concern ourselves with 
noise or numerical issues. We
plan to address some of these issues in future work.

\subsection*{Acknowledgements}
We would like to thank Dylan Thurston
for numerous helpful
conversations and suggestions throughout this project. 
We thank Brian Osserman with his help on algebraic geometry questions.

Steven Gortler was partially supported by NSF grant DMS-1564473. Ioannis Gkioulekas and Todd Zickler received support from the DARPA REVEAL program under contract no.~HR0011-16-C-0028.

\section{Idea Overview}
\label{sec:introa}

\subsection{Edge measurements}
To put this work in the context of previous mathematical results,
let us begin with the simpler setting, where we are given
an unlabeled set of edge-lengths.

\subsubsection{First case: complete graphs $K_n$}
Boutin and Kemper~\cite{BK1} 
(Theorem~\ref{thm:bkMain} below)
have shown that if $\p$ is a generic
$n$-point configuration in $\R^d$ with $n \geq d+2$, and we are given the complete
set of all $N:=\binom{n}{2}$
edge lengths as an unlabeled set, then
$\p$ is uniquely determined up to Euclidean congruence and point
relabeling from this data.  Since, in this case, we have all possible $N$ edge measurements, 
we can associate these measurements with the edges of the complete graph $K_n$.

The main idea behind the Boutin--Kemper result is to study the linear automorphisms
of the squared measurement variety $M_{d,n} \subseteq \CC^N $,
defined below,
of $n$ points
in $d$ dimensions. The variety $M_{d,n}$ represents all of the possible $N$-sets of 
squared edge-length measurements over all possible configurations
of $n$ points in $d$ dimensions. (For technical reasons,
such varieties  are most easily
studied in the complex setting.) Boutin and Kemper show
(Theorem~\ref{thm:bk-linear-s} below) that if an ``edge
permutation'' (permutation of the coordinate axes of $\CC^N$) 
gives rise to a linear  automorphism of $M_{d,n}$
(maps the variety to itself), then this edge permutation must
arise due to a relabeling of the $n$ vertices.

Theorem~\ref{thm:bkMain} then follows from 
Theorem~\ref{thm:bk-linear-s} and the following general principle
(see Appendix~\ref{sec:geometry} for a proof).
\begin{theorem}
\label{thm:prin2}
Let $V \subseteq \CC^N$ be an irreducible algebraic variety
and
$\genericpoint$ a generic point of $V$.
Let $\A$ be a bijective linear map on $\CC^N$
that maps $\genericpoint$
to  a point in $V$. Let all the above be defined over
$\QQ$. Then $\A(V) = V$, that is, $\A$ 
acts as a linear automorphism of $V$.
\end{theorem}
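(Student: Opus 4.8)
The plan is to exploit the rigidity that genericity imposes: a generic point of $V$ cannot lie on any proper subvariety of $V$ that is defined over $\QQ$ (equivalently, its transcendence degree over $\QQ$ equals $\dim V$). So the strategy is to build a $\QQ$-defined subvariety out of $\A$ and $V$, show $\genericpoint$ lies on it, conclude it is not proper, and then transfer that conclusion from $\genericpoint$ back to all of $V$.

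\medskip

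First I would set $W := \A(V)$. Since $\A$ is a bijective linear map defined over $\QQ$, $W$ is again an irreducible algebraic variety defined over $\QQ$, of the same dimension as $V$. Now consider $Z := V \cap W$, which is defined over $\QQ$. The hypothesis says $\A(\genericpoint) \in V$; since also $\A(\genericpoint) \in \A(V) = W$ trivially, we get $\A(\genericpoint) \in Z$. The key point is that $\A(\genericpoint)$ is itself a generic point of $W$: genericity of $\genericpoint$ in $V$ means it avoids every proper $\QQ$-subvariety of $V$, and applying the $\QQ$-linear isomorphism $\A$ shows $\A(\genericpoint)$ avoids every proper $\QQ$-subvariety of $W$ (the preimage under $\A$ of such a subvariety would be a proper $\QQ$-subvariety of $V$ containing $\genericpoint$). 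Phrased via transcendence degree: $\A$ has entries in $\QQ$, so $\A(\genericpoint)$ has the same coordinate field over $\QQ$ as $\genericpoint$, hence transcendence degree $\dim V = \dim W$ over $\QQ$.

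\medskip

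Next, since $\A(\genericpoint)$ is a generic point of the irreducible variety $W$ and it lies on the $\QQ$-subvariety $Z \subseteq W$, the variety $Z$ cannot be a proper subvariety of $W$; being closed and irreducible-containing, we get $Z = W$, i.e. $W \subseteq V$. Thus $\A(V) \subseteq V$. Because $\A$ is bijective and linear, $\A^{-1}$ is also defined over $\QQ$, and $\dim \A(V) = \dim V$ with both irreducible forces $\A(V) = V$ — an inclusion of irreducible varieties of equal dimension is an equality. Hence $\A$ is a linear automorphism of $V$, as claimed.

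\medskip

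The only real subtlety — the step I would be most careful about — is the precise meaning of ``generic'' and the claim that $\A(\genericpoint)$ inherits genericity in $W$. One must pin down that ``generic point of $V$, all defined over $\QQ$'' means $\genericpoint$ lies on no proper $\QQ$-subvariety of $V$ (equivalently its coordinates generate a field of transcendence degree $\dim V$ over $\QQ$), and then observe that a $\QQ$-linear isomorphism carries $\QQ$-subvarieties to $\QQ$-subvarieties bijectively and preserves transcendence degree. Everything else is the standard fact that a generic point of an irreducible variety cannot lie on a proper closed $\QQ$-subvariety, applied once to $Z \subseteq W$. I would also note the degenerate case where $\A(\genericpoint)$ might a priori fail to be generic is ruled out exactly because $\A$ is invertible over $\QQ$; without invertibility the argument would break.
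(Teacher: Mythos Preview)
Your proof is correct and follows essentially the same approach as the paper: both arguments use genericity to force a $\QQ$-defined subvariety to be improper and then invoke irreducibility plus equal dimension to upgrade $\A(V)\subseteq V$ to equality. The only cosmetic difference is that the paper pulls back---it shows $\genericpoint\in\A^{-1}(V)$ directly and concludes $V\subseteq\A^{-1}(V)$ (this is its Theorem~\ref{thm:prin1})---whereas you push forward, first transferring genericity to $\A(\genericpoint)$ in $W=\A(V)$ and then arguing $V\cap W=W$; the paper's route thus skips your intermediate genericity-transfer step, but the two are dual and equally valid.
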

In the intended application, 
$\genericpoint \in \CC^N$ is the correctly ordered 
set of edge lengths. Since it represents a consistent set
of edge lengths from $K_n$, it is a point in $V:=M_{d,n}$.
The linear map $\A$ represents some potential edge permutation.
If the permuted squared lengths still fit together consistently as an $n$-point
configuration, then $\A(\genericpoint)$
is also in $M_{d,n}$ and 
from Theorem \ref{thm:prin2} $\A$ must act as a linear  automorphism of $M_{d,n}$.
Then from Theorem~\ref{thm:bk-linear-s}, we can conclude that this permutation can only arise
from a vertex relabeling. Any other type of permutation must place 
$A(\genericpoint)$, with $\genericpoint$ generic,  outside of $M_{d,n}$.

We can turn Boutin--Kemper's theorem into an algorithm for 
reconstructing a
generic $\p$ from its unordered edge length 
measurements.   
To check whether an ordering $\genericpoint = (l_{ij})$ of the edge measurements 
is in the complex variety, $M_{d,n}$, we compute the rank of the $(n-1)\times (n-1)$ Gram matrix:
\[
	\begin{pmatrix}
		2l^2_{1n} & \cdots & \cdots & l^2_{1n} + l^2_{(n-1)n} - l^2_{1(n-1)} \\
		\vdots & \ddots & &  \vdots \\
		\vdots &  & \ddots & \vdots \\
		l^2_{1n} + l^2_{(n-1)n} - l^2_{1(n-1)} & \cdots  & \cdots  & 2l^2_{(n-1)n}
	\end{pmatrix}
\]
If this rank is at most $d$, then $\genericpoint\in M_{d,n}$.  
Due to the genericity assumption, if $\genericpoint$ is \emph{not}
correctly ordered, then 
$\genericpoint$ will \emph{not} be 
in $M_{d,n}$ and the matrix will have a larger rank.
As such, no explicit 
positive semidefiniteness test is needed for this step; see Remark~\ref{rem:noPsdTest}.
Algorithmically, 
we can simply try different orderings for 
$\genericpoint$  until we find 
one that is in $M_{d,n}$.  Such an ordering will exist due to the assumption
that this data arose from an actual configuration $\p$.
From the assumed genericity, this ordering will be unique
(up to vertex relabeling).
So this ordering must correspond
to the ordered complete graph that was used to measure $\p$.
Next, since $\p$ was real, the Gram matrix must be positive semidefinite (PSD). Thus, it can be factored to find the real-valued configuration
\cite{schoen,YH38}. 

This algorithm is only applicable in practice for very small $n$, but a 
more efficient approach is based on applying it iteratively to smaller subsets of the data,
as described below.  The overall approach of generating candidate 
combinatorial types and testing them with polynomial predicates 
will appear throughout what follows.

\subsubsection{Second case: $K_{d+2}$ subconfigurations}

Next, suppose that 
$\p$ is a generic $n$-point 
configuration in $\R^d$ with $n \geq d+2$, and we are given
a set 
of $D:=\binom{d+2}{2}$
edge lengths as an unlabeled sequence.
Suppose that these $D$ lengths are 
consistent with 
the measured edge set over one   small complete graph,
$K_{d+2}$. 
We can show (Proposition~\ref{prop:ind} below) that these lengths
must actually have arisen through 
the edges of a $K_{d+2}$.
To do this we will invoke the following general result 
(see Appendix~\ref{sec:geometry} for a proof) which generalizes Theorem \ref{thm:prin2}:
\begin{theorem}
\label{thm:prin1}
Let $V$ be an irreducible algebraic variety
and
$\genericpoint$ a generic point of $V$.
Let $\E$ be a linear map that maps $\genericpoint$
to a point in some variety $W$. Let all the above be defined over
$\QQ$. Then $\E(V) \subseteq W$.
\end{theorem}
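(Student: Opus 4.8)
The plan is to reduce to a statement about prime ideals. Work in the polynomial ring $R = \QQ[x_1,\dots,x_N]$. Since $V$ is irreducible and defined over $\QQ$, its ideal $I(V)$ is a prime ideal of $R$. Similarly let $I(W)$ be the (radical) ideal of $W$, also defined over $\QQ$. A linear map $\E$ induces a ring homomorphism $\E^*\colon R \to R$ by precomposition (substituting the linear forms $\E^*(x_i)$ for the variables), and the desired conclusion $\E(V)\subseteq W$ is equivalent, by the Nullstellensatz over the algebraically closed field $\CC$, to the algebraic inclusion $\E^*(I(W)) \subseteq I(V)$, i.e.\ every defining polynomial of $W$, pulled back along $\E$, vanishes on $V$.

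So the task becomes: show that for each $g \in I(W)$, the polynomial $g\circ\E := \E^*(g)$ lies in $I(V)$. Here is where genericity enters. The polynomial $g\circ\E$ has coefficients in $\QQ$ (it is a $\QQ$-linear combination of products of the $\QQ$-linear forms defining $\E$). By hypothesis $\E(\genericpoint) \in W$, so $g(\E(\genericpoint)) = 0$; that is, the $\QQ$-polynomial $g\circ\E$ vanishes at the generic point $\genericpoint$ of $V$. The key step is the standard fact that a polynomial with coefficients in $\QQ$ that vanishes at a generic point of the $\QQ$-irreducible variety $V$ must vanish identically on $V$ — equivalently, it lies in the prime ideal $I(V)$. (This is essentially the definition of ``generic'': $\genericpoint$ avoids every proper $\QQ$-subvariety of $V$, so the zero locus of $g\circ\E$ inside $V$, being $\QQ$-closed, is either all of $V$ or a proper subvariety missing $\genericpoint$; the latter is impossible.) Applying this to every generator $g$ of $I(W)$ gives $\E^*(I(W))\subseteq I(V)$, hence $\E(V)\subseteq \overline{\E(V)} \subseteq W$.

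The main obstacle — really the only subtle point — is making precise the passage between the three formulations: (i) the set-theoretic inclusion $\E(V)\subseteq W$ over $\CC$, (ii) the ideal inclusion $\E^*(I(W))\subseteq I(V)$ over $\QQ$, and (iii) the vanishing of individual $\QQ$-polynomials at the generic point. Step (i)$\Leftrightarrow$(ii) uses that $W$ is Zariski-closed (so $\E(V)\subseteq W$ iff $\overline{\E(V)}\subseteq W$ iff $I(W)\subseteq I(\overline{\E(V)}) = (\E^*)^{-1}(I(V))$, the last equality because $V$ is the closure of $\{\genericpoint\}$ in the appropriate sense and $\E$ is a morphism) together with the Nullstellensatz; the $\QQ$-rationality of all objects is what lets us talk about a single well-defined notion of generic point. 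Step (ii)$\Leftrightarrow$(iii) is just the generic-vanishing lemma above, applied generator-by-generator. Once these bookkeeping points are in place the proof is immediate. Note that, unlike Theorem~\ref{thm:prin2}, we do \emph{not} need $\E$ to be bijective and we do \emph{not} conclude equality — $\E$ may collapse $V$ into a proper subvariety of $W$ — so no dimension or irreducibility argument on the image is required.
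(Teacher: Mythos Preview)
Your proof is correct and is essentially the same argument as the paper's, rendered in ideal-theoretic rather than set-theoretic language. The paper's version is a three-line contrapositive: if $\E(V)\not\subseteq W$ then $\E^{-1}(W)$ is a $\QQ$-variety not containing $V$, yet it contains $\genericpoint$, contradicting genericity---this is exactly your ``generic-vanishing lemma'' applied once to the preimage variety rather than polynomial-by-polynomial.
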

In our application, $\E$ will be the linear map that projects $M_{d,n}$
onto a specific set of coordinates corresponding to $D$ edges, 
and $W$ will be $M_{d,d+2}$.  
A rigidity-theoretic argument
tells us that if the 
$D$ edges do  \emph{not} form a $K_{d+2}$ subgraph of $K_n$, the
image $\E(M_{d,n})$ will be $D$-dimensional.  On the other hand, 
$M_{d,d+2}$ has dimension $D-1$.  From  Theorem \ref{thm:prin1}, 
we then see that if $\bl$ is generic and 
$\E(\bl)$ is in $M_{d,d+2}$,
then $\E$ must correspond to a $K_{d+2}$, since this is the only 
way for $\E(M_{d,n})$ to have dimension less than $D$.

As in the previous section, we can test whether $D$ ordered measurements
are in $M_{d,d+2}$ by a matrix rank computation.  
(No PSD test is needed here,
as described in Remark~\ref{rem:consistPSD}.)
If they are in $M_{d,d+2}$, we can then
apply Boutin and Kemper's result to this subset, 
and uniquely reconstruct the associated subconfiguration
of these $d+2$ points, up to congruence.

\subsubsection{Trilateration}

This idea can now be used (when $d\ge 2)$
to reconstruct all $n$ of the points
under the assumption that our unlabeled data includes the measurements of
an edge subset that is rich enough to allow for trilateration~\cite{dux2}. 
Loosely speaking, this means that
the measured edge set contains one   complete graph
$K_{d+2}$, and then includes more edges that allow us to 
inductively glue all of the vertices, one by one,  onto the 
currently reconstructed point set. 
Each such inductive step involves one new vertex $v$ with $d+1$
edge measurements connecting $v$ to the already reconstructed
set. This essentially allows us to find another $K_{d+2}$
graph that overlaps sufficiently with the already reconstructed
point set so that they can be glued together
in a unique manner. (See Figure~\ref{fig:pt}, top.)
As argued above, the geometry of each such $K_{d+2}$ is completely determined by
its unlabeled edge lengths.

Assuming our full data set allows for trilateration, then there is
a unique $n$-point configuration consistent with the data, up to congruence.
Finding these $K_{d+2}$ subgraphs
requires a somewhat exhaustive search
over the data set, giving us a running time that is 
exponential in $d$ (which we think of as fixed) but only 
polynomial in $n$.

In this context, we prove 
two statements of slightly different flavors:
Theorem~\ref{thm:triBK2} is a ``global rigidity statement''.  It says that
if $\v$ is the measurements of a generic configuration $\p$ of $n$ points 
by a set of edges $G$ allowing for trilateration, then there is no 
other set of edges $H$   (maybe not allowing for trilateration)  and
$n$-point configuration $\q$ (maybe not generic) such that measuring $\q$
by $H$ produces $\v$.  

Of course, in an algorithmic setting, we may have no way to know 
in advance either 
the number of points $n$ in the configuration that was measured
or whether the graph $G$ describing the combinatorics of the measurements 
allowed for trilateration.
Our second ``certificate'' statement, Theorem~\ref{thm:triBK2ALG}, says 
the following: 
Let $\p$ be an (unknown) generic configuration and $G$ an
(unknown) edge set producing the (known) measurements $\v$.
Let  
$H$ be
 a (reconstructed) edge set that allows for trilateration and let
$\q$ be a (reconstructed) configuration.
Suppose that the measurements $\v^-$ resulting from $\q$ through $H$
agrees with a subset of $\v$.
Then, in fact, $\q$ is congruent to a subconfiguration of $\p$.
We interpret Theorem~\ref{thm:triBK2ALG} as saying 
that a trilateration-based algorithm can correctly reconstruct (part of) $\p$, without any assumptions 
beyond $\p$ being generic. The algorithm does not even 
need to figure out how to explain all of $\v$, but rather
only needs to find a trilaterizable subset.  For edge measurements,
our unlabeled trilateration algorithm coincides with the 
TRIBOND algorithm described in 
proposed in \cite{dux2}.  However, the analysis of its correctness
is new, and the statements we formulate reveal some subtleties 
that have not appeared before.

These two theorems, while complementary, are incomparable,
because of the different types of assumptions.  Since 
each has a natural application, we will prove analogues 
of both for path and loop ensembles below.

\subsection{Path measurements included}
\label{sec:included}
Suppose next that we want to look at data sets that may include path lengths in addition to edge lengths.
In this case can trilateration still work?  This is a much harder problem.
In particular, suppose we find a subset of $D$ measurements that are consistent with 
the edges of a $K_{d+2}$ subgraph. 
It is conceivable that
there is some adversarial, oddball set of $D$ paths among the $n$ points that, for all configurations, 
can be misinterpreted as
a consistent collection of $D$ measurements from the edges
of a $K_{d+2}$.
Theorem~\ref{thm:bk-linear-s} and Proposition~\ref{prop:simplex2}
are no longer sufficient, as now we are not guaranteed that we are just looking
at a subset of edge measurements. 

To answer these questions, we need a better understanding of 
the behaviour of more general 
linear maps applied to edge lengths over $K_n$. 
(Recall, a path length is just some sum of edge lengths.) 
We do this by introducing a new variety, $L_{d,n}$,
called the unsquared measurement variety.
We study its group of linear automorphisms (so we can apply Theorem~\ref{thm:prin2})
and study which linear maps have images with 
deficient dimension images (so we can apply Theorem~\ref{thm:prin1}).
We have relegated this technical study of these linear automorphism groups to its own
dedicated paper~\cite{loopsAlg}. 
With this in hand, we then argue in this paper that trilateration can still work!

\subsection{Loops instead of paths}

Finally, we now can move to the case where all of our measurements are loops, 
and there are no 
simple edge measurements at all. In reality, most of the hard work though has already been done
by the reasoning of Section~\ref{sec:included}.

To reconstruct a configuration of $d+2$ points, we can no longer use the $D$
edges of a $K_{d+2}$. We instead assume that we have a specific canonical collection
of $D$ loop measurements that we can use to reconstruct $d+2$ points. In fact, we will use
two types of canonical loop sets:  one to reconstruct an isolated $K_{d+2}$, and another
to reconstruct a single new point during trilateration. When describing these, it helps to have dedicated terms for two particular types of loops. So we use \emph{ping} for a loop that contains only two points (and so has length equal to twice an edge length) and \emph{triangle} for a loop that contains three points  (and so has length equal to the sum of three edge lengths).   

To reconstruct an isolated $K_{d+2}$, our hope is to find 
the measurements of lengths comprising $d+1$ pings with one common point,
and the $\binom{d+1}{2}$ triangles that include the pinged $d+1$ points (see Figure~\ref{fig:pt}, bottom left). 
 To add an additional point $v$ during
trilateration, our hope is to use 
all of the edge lengths amongst $d+1$ previously reconstructed points, and to find the measurements of lengths comprising 
one ping and $d$ triangles that include $v$
(see Figure~\ref{fig:pt}, bottom right). 
Thus, in the loop setting, we change our definition of allowing for trilateration to mean that
our loop data includes sufficient canonical data of this type to inductively include all of the points.
In the application described in the introduction, all of the pings and triangles will contain the
common point $\p_1$, but we will not require that assumption in what follows.

With these altered definitions, we can again apply 
the reasoning of Section~\ref{sec:included}
and argue that loop-based trilateration will work as well.

\begin{figure}[ht]
	\centering
	\def\svgwidth{.8\textwidth}
	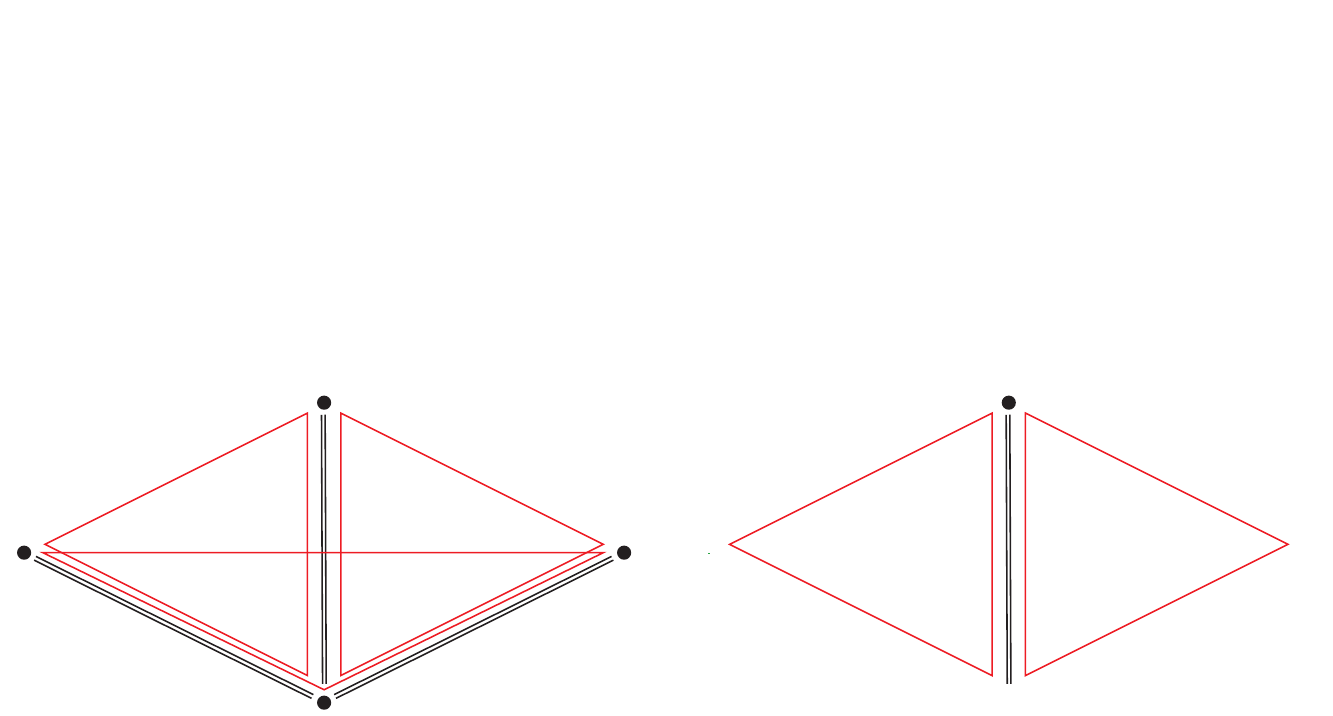
	\caption{Comparison between path measurements (top row) and loop measurements (bottom row). Top row: A $K_{4}$ contained within a path measurement ensemble 
consists of six edges (blue lines) (left). During trilateration using path measurement
 data, three points $\p_1$, $\p_2$, $\p_3$ are known, and a fourth point $\p_4$ is
reconstructed from three edge length measurements (right).
Bottom row: A $K_{4}$ contained within a loop measurement ensemble consists of three pings (double black lines) and three triangles (red lines) (left). During trilateration using loop measurement data, three points $\p_1$, $\p_2$, $\p_3$ are known, and a fourth point $\p_4$ is
reconstructed from one ping and two triangle length measurements (right).}\label{fig:pt}
\end{figure}

%%%%%%%%%%%%%%%%%%%%%%%%%%%
%\newpage

%\tableofcontents

%\newpage
%%%%%%%%%%%%%%%%%%%%%%%%%%%

\section{Definitions and Main Results}\label{sec:defs}

We start by establishing our basic terminology.
\begin{definition}\label{def:constants}
Fix positive integers  $d$  (dimension)
and $n$ (number of points).
Throughout the paper, we will set $N:= \binom{n}{2}$, 
$C:=\binom{d+1}{2}$, and $D:= \binom{d+2}{2}$.
These constants appear often because they are, respectively, the 
number of pairwise distances between $n$ points, the dimension of the 
group of congruences in $\RR^d$, 
and the number of edges in a complete $K_{d+2}$ graph.
\end{definition}

\begin{definition}
\label{def:graph}
A \defn{configuration}, $\pn$ is a  sequence of $n$ points
in $\RR^d$. (If we want to talk about sequences of points in $\CC^d$, we will
explicitly call this a \defn{complex configuration}.) 

We think of each integer in $\{1,\dots,n\}$ as a \defn{vertex}
of an abstract complete graph $K_n$. An \defn{edge}, $\{i,j\}$, is an
unordered distinct pair of vertices. The complete edge set of $K_n$ has cardinality $N$.

A \defn{path} $\alpha:=(i_1,
  \dots, i_z)$ is a finite sequence of 
  $z \geq 2$ vertices, 
  with no vertex immediately repeated.  
  (The simplest kind of path, $(i,j)$, is comprised by a single edge.) 
  A 
\defn{loop} is a path with $z \geq 3$ vertices
where $i_1=i_z$.  
(The
simplest kind of loop $(i,j,i)$ is called a \defn{ping}. Another
important kind of loop $(i,j,k,i)$ is a \defn{triangle}.)

Fixing a configuration $\p$ in $\R^d$, we define the 
\defn{length} of an edge
$\{i,j\}$ to be the Euclidean distance between the points
$\p_i$ and $\p_j$, a real number. 

We define the \defn{length} $v$ of a path or loop $\alpha$ to be
the sum of the lengths of its comprising edges.  
\end{definition}

\begin{definition}
An \defn{edge multiset} is simply
a multiset of edges.
A path or loop naturally gives rise to the edge multiset which contains 
each of the edges traversed, with repetition. 
We will also denote 
an edge multiset as
$\alpha$. 
The length $v$ of an edge multiset $\alpha$ is the sum of the lengths of its comprising edges. We 
denote this measurement process
as $v = \la \alpha, \p \ra$. 
\end{definition}

The motivation for the notation $ \la \alpha, \p \ra$ will become evident in Definition~\ref{def:functionals},
where we describe the measurement process in terms of a linear functional.

\begin{definition}
\label{def:ens}
An \defn{edge measurement ensemble} 
$\pmb{\alpha}:=(\alpha_1,\dots,\alpha_k)$ is a
finite sequence of edges.  (This is the same as an ``ordered graph'' on the vertex set $\{1,\dots,n\}$.)
A \defn{path measurement ensemble} 
$\pmb{\alpha}:=(\alpha_1,\dots,\alpha_k)$ is a
finite sequence of paths. 
We define a 
\defn{loop measurement ensemble} similarly. 
We also define an \defn{edge multiset measurement ensemble} in the same way.

A configuration $\p$ and a measurement
ensemble $\pmb{\alpha}$ give rise to a \defn{data set} $\v$ that is the 
finite sequence 
of real numbers made up of the lengths of its paths or loops or edge multisets. We denote this as 
$\v = \la \pmb{\alpha}, \p \ra$. We say that this data set \defn{arises} from this measurement
ensemble. Notably, a data set $\v$ 
itself does not include any labeling information about the measurement ensemble it arose from.

We denote by $|\v|$ the number of elements in $\v$.
\end{definition}

\begin{definition}
We say that a path or loop $\alpha$ is 
\defn{$b$-bounded}, for some positive integer $b$, 
if no edge appears more than $b$ times in $\alpha$.
We say that a path or loop
measurement ensemble $\pmb{\alpha}$ is $b$-bounded if it comprises only $b$-bounded
loops or paths.
\end{definition}

\begin{remark}
\label{rem:diam}
In a practical setting, we may not know the actual bound $b$ of a $b$-bounded ensemble,
but instead know that it must exist for other reasons. 
In particular, suppose we have some 
bound on the maximal distance between any pair of points
in $\p$. Then we can safely assume that any sufficiently huge
length value $v$ arises from a sufficiently complicated path or loop
that we will not use in our trilateration,
and discard it.
Suppose then that we also have some bound on the minimal distance
between any pair of points in $\p$. Then we know that any non-discarded
value must arise from a $b$-bounded loop or path with some 
appropriate $b$.
\end{remark}

The process of trilateration will involve gluing together smaller pieces of the
configuration. Thus we introduce the following notation.
\begin{definition}
We use $\p_I$ to refer to a \defn{subconfiguration} of a configuration
$\p$ indexed by an index sequence $I$, that is a 
(possibly reordered)
subsequence of
$\{1,\dots,n\}$. In particular, we use $\p_T$ to refer to a
$(d+2)$-point subconfiguration in $\p$, indexed by
a sequence $T=(i_1,\dots,i_{d+2})$ of $\{1,\dots,n\}$. Similarly, we use
$\p_R$ to refer to a $(d+1)$-point subconfiguration  of  $\p$.

We use $\v_J$ to refer to a \defn{sub data set}, 
a (possibly reordered) 
subsequence of the data set $\v$ indexed by an
index sequence $J$, and similarly for a \defn{subensemble} $\pmb{\alpha}_J$.
\end{definition}

We will be interested in measurement ensembles that are sufficient to 
uniquely determine the configuration in a greedy manner using trilateration.
Trilateration starts by finding enough data to reconstruct the location
of ${d+2}$ points. In the path setting, this is done by looking
for the edges of a $K_{d+2}$ graph. In the loop setting, this is done
by looking for a different canonical data set
over $d+2$ points.

\begin{definition}
\label{def:contained}
In the edge or path setting, we say that  
a $K_{d+2}$
 subgraph  of $K_n$ is
\defn{contained} within a path measurement ensemble $\pmb{\alpha}$ if
the ensemble includes a subensemble of size $D$ comprising the edges of this subgraph.
For the $2$-dimensional case, see Figure~\ref{fig:pt} (upper left).

In the loop setting, we say that  
a $K_{d+2}$
 subgraph  of $K_n$ 
with vertices $\{i_1,\dots, i_{d+2}\}$ 
is
\defn{contained} within a loop measurement ensemble $\pmb{\alpha}$ if
the ensemble includes a subensemble of size $D$ comprising:
the $d+1$ pings
$(i_1, i_j , i_1)$ for $j$ spanning  $\{2,\dots,d+2\}$; and also the
triangles 
$(i_1, i_j, i_k, i_1)$
for $j < k$ spanning  $\{2,\dots,d+2\}$.
That is, the ensemble includes
all pings and triangles in this $K_{d+2}$
 with endpoints at vertex
$i_1$.  For the $2$-dimensional case, see Figure~\ref{fig:pt} (bottom left).

\end{definition}

Trilateration proceeds by iteratively adding one more vertex onto an already
reconstructed subset of point locations. This is done by looking for a canonical, sufficient 
set of data. Such data sets differ between the path and the loop setting.

\begin{definition}\label{def:ensemble}
We say that an edge
or a path measurement ensemble \defn{allows for trilateration}%
\footnote{In this definition, and in the rest of the paper, trilaterations 
have a single ``base'' $K_{d+2}$.  One could consider a more general notion that 
allows for multiple bases.  With small modifications, our results carry 
over to that setting.}
if, after reordering the vertices: 
(i) it contains an initial \defn{base} $K_{d+2}$ 
over  $\{1,\dots,d+2\}$; and
(ii) for all subsequent $(d+2) <j \leq n$, it includes as a 
subsequence a \defn{trilateration sequence} comprising the edges
$\{i_1,j\},\dots,\{i_{d+1},j\}$
where all $i_k < j$. For the $2$-dimensional case, see Figure~\ref{fig:pt} (top right).

We say that a loop measurement ensemble \defn{allows for 
trilateration} if, after reordering the vertices:
(i) it contains an initial \defn{base} $K_{d+2}$ over $\{1,\dots, d+2\}$; and
(ii) for all subsequent $(d+2) <j \leq n$, it includes as a subsequence a \defn{trilateration sequence} comprising the triangles  
$(i_1,i_2,j,i_1),\dots,(i_1,i_{d+1},j,i_1)$, and also the ping
$(i_1,j,i_1)$,
where all $i_k < j$.
That is, it includes one ping from $j$ back to one previous $i_1$,
and $d$ triangles back to the previous vertices and including $i_1$.
(See Figure~\ref{fig:pt} (bottom right) for the $2$-dimensional case.)
\end{definition}

\begin{definition}
Trilateration refers to the greedy reconstruction
process whereby one starts by reconstructing a  
base configuration and then inductively adds 
new vertices to the already reconstructed configuration.
We call each such step \defn{trilaterizing a vertex}.

As the process runs, it maintains a set of 
already reconstructed points, and, at each step,
reconstructs a single new point. To find the new point, 
the algorithm searches for an ordered $d+1$-tuple of 
reconstructed points and $d+1$ ordered measurement values 
in the data set so that: the $d+1$ measurements, 
along with the $C = D - (d+1)$ imputed edge lengths among the reconstructed 
points  satisfy a certain
polynomial predicate.
The predicate is based on a Cayley-Menger determinant and proves that these $D$ values arose from $d+2$ points in dimensions $d$.
The $C$ imputed  lengths may or may not be in the data set.

Since $d+1$ of the points are already reconstructed, it is then possible to 
solve uniquely for the last one.
\end{definition}

Note that a path (resp. loop) measurement ensemble that 
allows for trilateration may include any other additional paths (resp. loops) beyond 
those specified in Definition~\ref{def:ensemble}. There 
may also be more than one trilateration sequence
in the ensemble. 

Unlabeled reconstruction from paths or loops
can have difficulties distinguishing between two points with some length between them,
and two points that are half as far from each other but where the edge between them is measured 
twice (as in a ping). Thus we introduce the following scaling notation.

\begin{definition}
For $s$ a real number, the \defn{$s$-scaled} configuration $s\cdot\p$
is the configuration obtained by scaling each of the coordinates of
each point in $\p$ by $s$.  The 
subconfiguration $s\cdot\p_I$
is defined similarly.
For $s$ a positive integer and an
edge multiset $\alpha$, the
\defn{$s$-scaled} edge multiset $s\cdot\alpha$ is 
the edge multiset obtained by scaling the multiplicity of each edge
by $s$.
For $s$ a positive integer, the \defn{$s$-scaled} edge multiset
measurement ensemble $s\cdot\pmb{\alpha}$ is defined by scaling each
element of the ensemble. 

We can then define, for a positive integer $s$, 
the $s$-scaling of a path or loop measurement ensemble by 
considering the elements as edge multisets.
\end{definition}

The main results in this paper do not hold unconditionally. There can be 
special inputs that will fool us or are even inherently ambiguous. 
We explicitly rule out such special inputs in what 
follows.

\begin{definition}
\label{def:genR}
We say that a real point in $\RR^{dn}$
is \defn{generic}
if its coordinates do not satisfy any non-trivial 
polynomial equation with coefficients in $\QQ$.
The set of generic real points have full measure and are 
standard-topology dense in $\RR^{dn}$.

We say that a configuration $\p$ of $n$ points in $\RR^d$ is generic if
it is generic when thought of as a single point in $\R^{dn}$.
\end{definition}

Various theorems in this paper will be shown to hold, 
for each $n$, 
for all generic configurations of the
configuration space, $\RR^{dn}$.  
For example, 
Boutin and Kemper~\cite{BK1,BK2} 
study the question of when an $n$-point configuration in $\R^d$ with $n \geq d+2$
will be uniquely determined from  the complete
set of all $N:=\binom{n}{2}$
edge lengths as an unlabeled set. Their
results show that the configuration will be determined 
unless the coordinates of the configuration satisfy a  polynomial equation 
with rational coefficients
(see Remark~\ref{rem:bkg}).
This means that
such a non-determined configuration must be non-generic.  Contrapositively, genericity
rules out such non-determined configurations.

We note that the set of 
generic points is not open in $\RR^{dn}$.
However, if one imposes any notion of finiteness on the combinatorial objects in question,
such as 
only considering 
measurement ensembles that
are $b$-bounded, for some chosen $b$, then the same 
statements will, in fact, hold over some open and 
dense subset of $\RR^{dn}$. (This set will be Zariski 
open; i.e., the complement of a proper algebraic 
set.)

\begin{remark}
Without some kind of genericity hypothesis, theorems like the ones 
presented in this paper are false: there do exist rare ``bad'' 
inputs for which uniqueness will fail.  Whether it is safe to 
make a genericity assumption depends on the application.  In settings 
where the inputs are unconstrained, such as the sensing ones described 
in the introduction, genericity is a reasonable assumption.
Other applications may impose additional symmetries on the input; for 
example, all the distances might be  drawn from a small set, or the 
structures in question may be invariant to some Euclidean group.
For those applications, whether 
we typically observe the generic behaviour
becomes 
an experimental question. 
\end{remark}

\subsection{Results}\label{sec:results}

The first central conclusion of this paper will be the following
``global rigidity'' 
%(in the sense of  Definition~\ref{def:global-rigidity} below) 
statement:
\begin{mdframed}
\begin{theorem}
\label{thm:punchline}
Let the dimension be  $d\ge 2$. 
Let $\p$ be a generic configuration of $n\ge d+2$ points. Let
$\v= \la \pmb{\alpha}, \p \ra$ where 
$\pmb{\alpha}$
is a path (resp. loop) measurement ensemble
that allows for 
trilateration.

Suppose there is a 
configuration $\q$, 
also of $n$ points,
along with 
an edge multiset
measurement ensemble $\pmb{\beta}$ 
such that
$\v=\la \pmb{\beta},\q \ra$.

Then 
there is a vertex relabeling  of $\q$ such that,
up to congruence,
$s\cdot \q=\p$,
with $s$ an integer $\ge 1$.
Moreover, under this vertex relabeling,
$\pmb{\beta} = s\cdot \pmb{\alpha}$.

\end{theorem}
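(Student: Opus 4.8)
The strategy is to run the trilateration algorithm sketched in Section~\ref{sec:introa} on the data set $\v$, and to prove two things at each inductive step: (a) that any length-$D$ subensemble of $\pmb{\beta}$ whose measurements coincide with a ``canonical'' $K_{d+2}$-block of $\v$ must itself be, up to vertex relabeling and an integer scaling $s$, that same canonical block; and (b) that the reconstructed subconfiguration is therefore congruent to the corresponding $s$-scaled subconfiguration of $\p$. The two key general principles are already in hand: Theorem~\ref{thm:prin1} lets us conclude that a linear map sending a generic point of one variety into a second variety sends the whole first variety in; Theorem~\ref{thm:prin2} upgrades this to an automorphism when the map is bijective and the target equals the source. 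The new ingredient, relative to the pure-edge warm-up, is that a coordinate of $\v$ coming from $\pmb{\beta}$ is an arbitrary $b$-bounded \emph{path or loop} functional applied to $\q$, i.e.\ a nonnegative integer combination of edge lengths, not merely a single edge length. So the relevant ambient variety is the unsquared measurement variety $L_{d,n}$ (or $L_{d,d+2}$ for the blocks), and the relevant maps are the linear maps on these $L$-varieties induced by path/loop functionals; here I would invoke the classification of linear automorphisms and of dimension-deficient linear images of $L_{d,n}$ from the companion paper~\cite{loopsAlg}.

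Concretely, the steps in order. First, set up the base case: the canonical $D$-element block of $\v$ (the edges of the base $K_{d+2}$ in the path setting, or the $d{+}1$ pings plus $\binom{d+1}{2}$ triangles over $\p_1$ in the loop setting) is, by construction, $\la \pmb{\gamma},\p_T\ra$ for an explicit ensemble $\pmb{\gamma}$ on $d{+}2$ vertices; since $\p$ is generic, $\p_T$ is a generic $(d{+}2)$-point configuration. Some subensemble of $\pmb{\beta}$, say $\pmb{\beta}_J$, reproduces these same $D$ numbers from $\q$. Encode $\pmb{\gamma}$ and $\pmb{\beta}_J$ as linear ``measurement'' maps out of the edge-length coordinates of $K_n$ (resp.\ $K_{d+2}$), and use Theorem~\ref{thm:prin1}: the composite carrying the unsquared measurement variety over $\q$'s vertex set into the block coordinates must have image inside the $(D{-}1)$-dimensional variety cut out by $\pmb{\gamma}$ acting on $L_{d,d+2}$. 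A dimension count — the analogue of the rigidity-theoretic argument used for Proposition~\ref{prop:ind}, now carried out inside $L_{d,n}$ using the results of~\cite{loopsAlg} — shows the only way the image can have dimension $<D$ is if $\pmb{\beta}_J$ ``is'' a $K_{d+2}$-block up to relabeling and an integer rescaling $s$ (the rescaling is exactly the ambiguity flagged in the scaling discussion before Definition~\ref{def:genR}, e.g.\ a ping of a half-length edge versus a single full-length edge). Then Theorem~\ref{thm:prin2}, together with the automorphism classification of~\cite{loopsAlg} (the unsquared analogue of Theorem~\ref{thm:bk-linear-s}), forces the induced map to come from a vertex relabeling, and Boutin--Kemper-style reconstruction (Theorem~\ref{thm:bkMain}) pins down $\q_J$ as congruent to $s\cdot\p_T$ up to relabeling. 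Second, the inductive step: suppose the first $j{-}1$ points of $\p$ have been matched, up to a common relabeling and the same scale $s$, to a subconfiguration of $s\cdot\q$. The trilateration sequence for vertex $j$ contributes $d{+}1$ new coordinates of $\v$ that over $\p$ are an explicit ping/triangle (resp.\ edge) bundle tying $\p_j$ to $d{+}1$ already-placed points; matching numbers in $\v$ must again be produced by some subensemble of $\pmb{\beta}$, and the same prin1/prin2-plus-\cite{loopsAlg} argument shows this subensemble is the corresponding canonical bundle up to relabeling and scale $s$. Genericity of $\p$ (hence of each $(d{+}2)$-point subconfiguration, and the fact that the already-placed $d{+}1$ points affinely span $\R^d$) guarantees the new point's position is determined up to reflection, and the overlap with the already-reconstructed set of size $\ge d{+}1$ kills the reflection, so $\p_j$ is forced. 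The scale $s$ is the same throughout because consecutive blocks share $\ge 2$ vertices (indeed $\ge d{+}1$), and a shared edge fixes a common scaling factor. Third, assemble: after processing $j=d{+}3,\dots,n$ we have a single vertex relabeling under which $s\cdot\q=\p$ up to congruence, and tracing through each block we also read off $\pmb{\beta}=s\cdot\pmb{\alpha}$ under that relabeling, which is the last sentence of the theorem.

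A couple of points need care. We are never told $\pmb{\beta}$ is $b$-bounded, nor that $\q$ is generic, so the dimension arguments must be phrased purely on the variety side (where $\p$, hence $\genericpoint$, is generic and everything is defined over $\QQ$) and must allow $\pmb{\beta}$ to contain arbitrarily long paths/loops; this is precisely why we route everything through the linear-algebra of $L_{d,n}$ rather than through combinatorics of the ensembles directly, and why the classification in~\cite{loopsAlg} must cover \emph{all} linear maps, not just integer-combination ones with small support. Also, matching ``a subensemble of $\pmb{\beta}$ produces these $D$ numbers'' to ``a linear map into the block coordinates'' needs the observation that distinct data-set entries may repeat as real numbers; one handles this by fixing, once and for all, an injection from the canonical coordinates of $\v$ into the index set of $\pmb{\beta}$, which exists because $\v=\la\pmb{\beta},\q\ra$ as sequences.

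\textbf{Main obstacle.} The hard part is the dimension/automorphism input: showing that a linear image of $L_{d,n}$ into $D$ block-coordinates has dimension $<D$ \emph{only} for the (relabeled, integer-scaled) canonical blocks, and that a linear automorphism of $L_{d,d+2}$ realized by a path/loop ensemble is a relabeling up to integer scaling. Over squared distances this is Boutin--Kemper (Theorem~\ref{thm:bk-linear-s}) plus a clean rigidity count; over unsquared path/loop data the measurement variety is much less rigid-looking and admits the extra scaling automorphisms, so the classification genuinely requires the separate analysis of~\cite{loopsAlg}. Granting that classification, the rest of the argument is the bookkeeping of an induction whose skeleton already appears in the edge case of Section~\ref{sec:introa}.
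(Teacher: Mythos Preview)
Your plan has a genuine gap at its core. You want to constrain the subensemble $\pmb{\beta}_J$ by applying Theorem~\ref{thm:prin1} to the linear map it induces on $L_{d,n}$ over $\q$'s vertex set, concluding that its image lies in the $(D{-}1)$-dimensional block variety. But Theorem~\ref{thm:prin1} requires the \emph{input} point to be generic, and here that input is $l(\q)$, not $l(\p)$. You explicitly note that $\q$ is not assumed generic and claim this can be sidestepped by ``routing everything through the linear algebra of $L_{d,n}$''; it cannot. Knowing only that the generic vector $\w$ (generic in $\N(L_{d,d+2})$, via $\p$) lies in the image $\pmb{\beta}_J(L_{d,n})$ tells you at most that $\N(L_{d,d+2}) \subseteq \overline{\pmb{\beta}_J(L_{d,n})}$, which is no constraint at all --- any $D$ linearly independent functionals surject onto $\CC^D$. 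So nothing forces $\pmb{\beta}_J$ to have $K_{d+2}$ support, and the whole induction stalls at the base case.

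The paper's proof repairs exactly this point, and the repair is the main idea. Because $\pmb{\alpha}$ allows for trilateration it is infinitesimally rigid; together with the hypothesis that $\q$ has the \emph{same} number $n$ of points, Lemma~\ref{lem:punchline2} then transfers genericity from $\p$ to $l(\q)$ in $L_{d,n}$. With $l(\q)$ generic (and $\p$ generic, hence without coincident points, and $\pmb{\alpha}$ trilaterating $\p$ nicely by Lemma~\ref{lem:Cdep}), one applies the trilateration Lemma~\ref{lem:punchline1} with the roles of $(\p,\pmb{\alpha})$ and $(\q,\pmb{\beta})$ \emph{swapped}: $\q$ plays the generic configuration, and the known trilaterating ensemble $\pmb{\alpha}$ (acting on $\p$) is the one whose functionals are constrained by Theorem~\ref{thm:consist}/\ref{thm:verLoop}. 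This role reversal is what makes the equal-$n$ hypothesis do work and is precisely the step your outline is missing.
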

\end{mdframed}

When 
$\la \pmb{\alpha},\p \ra$ agrees with
$\la \pmb{\beta},\q \ra$ after some permutation, then
the theorem can be applied after appropriately permuting
$\pmb{\beta}$.

Note that if one lets $\q$ be non-generic \emph{and} puts no restrictions on 
the number of points, 
then one can obtain any target $\v$ by 
letting $\pmb{\beta}$ be a tree of edges and then placing
$\q$ appropriately. 

For algorithmic purposes we are better served by the
following variant of Theorem~\ref{thm:punchline}.

\begin{definition}
\label{def:rational:rank}
Given a finite sequence of $k$ complex numbers $w_i$, we say that 
they are \defn{rationally linearly dependent} if there is a
sequence  of rational 
coefficients $c^i$, not all zero, such that 
$0= \sum_i c^i w_i$. Otherwise we say that they are rationally linearly 
independent.
We define the \defn{rational rank} of $w_i$ to
be the size of the maximal subset that is rationally linearly 
independent. 
\end{definition}

\begin{definition}
Suppose that $\q$ is a configuration and 
$\pmb{\beta}$ an
ensemble that allows for trilateration.
We say that a vertex trilateration step is 
\defn{nice} if the $D$ trilaterating length
values ($d+1$ of these are in the data and $C$ 
are imputed from previously  reconstructed vertices) 
used as inputs to the the polynomial predicate
have rational rank $D$.
We say that $\pmb{\beta}$ trilaterates $\q$ \defn{nicely} if 
it has a trilateration sequence such that each step is nice.
\end{definition}

\begin{mdframed}
\begin{theorem}
\label{thm:punchlineALG}
Let the dimension be  $d\ge 2$. 
Let $\p$ be a generic configuration of $n\ge d+2$ points. 
Let
$\v= \la \pmb{\alpha}, \p \ra$
where 
$\pmb{\alpha}$
is an edge multiset measurement ensemble.

Suppose there is a 
configuration $\q$, 
of $n'$ points,
along with 
a path or loop
measurement ensemble $\pmb{\beta}$ 
and let
$\v^-=\la \pmb{\beta},\q \ra$.
Assume that $\pmb{\beta}$ allows for trilateration and
that $\pmb{\beta}$ trilaterates $\q$ nicely.
We also suppose that no two points in $\q$
are coincident.

Suppose that $\v^-$ is contained in $\v$ as a subsequence of values.
Let $\pmb{\alpha}^-$ be the subsequence of 
edge multisets in 
$\pmb{\alpha}$ corresponding to $\v^-$. Let $\p_S$ be the subconfiguration
of $\p$ indexed by the vertices 
that are endpoints of edges in the support  of 
$\pmb{\alpha}^-$.

Then there is a vertex relabeling of $\p_{S}$ such that,
up to congruence,
$s\cdot \p_{S}= \q$,
with $s$ an integer $\ge 1$.
Moreover, under this vertex relabeling,
$s\cdot\pmb{\beta} = \pmb{\alpha}^-$.
\end{theorem}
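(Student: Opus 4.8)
The plan is to reduce Theorem~\ref{thm:punchlineALG} to Theorem~\ref{thm:punchline} by the following strategy: first pin down the combinatorial matching between the reconstructed data $\v^-$ and the original ensemble $\pmb{\alpha}$ on the subconfiguration $\p_S$, and then apply the global-rigidity statement to that subconfiguration. The ``niceness'' and ``no coincident points'' hypotheses are exactly what is needed to make the reduction go through without circular assumptions on $\q$. First I would set up notation: $\pmb{\alpha}^-$ is the subensemble of $\pmb{\alpha}$ whose measurement values are $\v^-$, and $\p_S$ is the restriction of $\p$ to the vertices appearing in $\pmb{\alpha}^-$. Since $\p$ is generic, $\p_S$ is generic as a configuration on $|S|$ points. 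So the natural move is: treat $\pmb{\alpha}^-$ as a path (resp.\ loop) measurement ensemble on $\p_S$, treat $\pmb{\beta}$ as a measurement ensemble on $\q$, note both produce the same data set $\v^-$, and invoke Theorem~\ref{thm:punchline}. The obstruction is that Theorem~\ref{thm:punchline} requires the \emph{first} ensemble $\pmb{\alpha}^-$ to allow for trilateration, which we do not get for free --- we only know $\pmb{\beta}$ allows for trilateration. So the reduction must run in the other direction, with $\q$ playing the role of the ``generic, trilaterating'' configuration; but $\q$ need not be generic. This is the crux.

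To get around this, I would argue step by step along the trilateration order of $\pmb{\beta}$, exploiting the rational-rank-$D$ hypothesis. The key observation is that each trilateration step of $\pmb{\beta}$ involves $D$ measurement values that are rationally linearly independent. When we match these $D$ values into the data set $\v$ of the generic configuration $\p$, the corresponding $D$ paths (resp.\ loops) of $\pmb{\alpha}$ form a linear functional ensemble whose value vector has full rational rank; combined with the genericity of $\p$ and Theorems~\ref{thm:prin1} and~\ref{thm:prin2} (via the unsquared measurement variety $L_{d,n}$ machinery imported from \cite{loopsAlg}), this forces the matched $\pmb{\alpha}$-paths to be, up to an integer scale $s$, the canonical trilateration data (a base $K_{d+2}$, or a ping-plus-$d$-triangles trilateration sequence) on the appropriate vertices of $\p$. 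In other words: any $D$-subset of $\pmb{\alpha}$ whose values can be (rationally-independently) realized as a trilateration step of some non-coincident configuration must \emph{itself} be a scaled trilateration step in $\p$. Iterating this vertex by vertex along $\pmb{\beta}$'s order, one reconstructs the combinatorial identification $\pmb{\alpha}^- = s\cdot\pmb{\beta}$ (after vertex relabeling) and simultaneously sees that $\pmb{\alpha}^-$, being a scaled image of $\pmb{\beta}$, allows for trilateration on $\p_S$.

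Once that combinatorial matching is in hand, the rest is bookkeeping: $s\cdot\pmb{\beta}$ and $\pmb{\alpha}^-$ are the same ensemble (after relabeling) measured on $\q$ and on $\p_S$ respectively, with the $s$-scaling of paths absorbing an $s$-scaling of the configuration exactly as in the scaling definitions (a ping traversed with $s$ times the multiplicity has the same length as the original ping on a configuration $s$ times larger). Applying Theorem~\ref{thm:punchline} --- or really just re-running its proof --- to the generic configuration $\p_S$ with its trilaterating ensemble $\pmb{\alpha}^-$ then yields that $\q$ is, up to congruence and vertex relabeling, $s\cdot\p_S$, which is the claim. The main obstacle, as flagged, is the first part: showing that full rational rank on $\q$'s side, together with genericity of $\p$, is enough to force each matched block of $\pmb{\alpha}$ to be a genuine scaled canonical trilateration block --- this is where one must carefully invoke the classification of linear automorphisms and dimension-deficient linear images of $L_{d,n}$ from \cite{loopsAlg}, and check that no ``adversarial oddball'' ensemble of $D$ paths can masquerade as a trilateration step across all configurations. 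The ``no two coincident points'' hypothesis is what prevents the degenerate scale-$0$ / collapsed cases that would otherwise break the argument, and the rational-rank hypothesis is what lets Theorem~\ref{thm:prin2} bite with a genuinely invertible linear map rather than a merely linear one.
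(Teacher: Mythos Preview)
Your proposal is essentially correct and matches the paper's approach: the heart of the argument is exactly the step-by-step induction along $\pmb{\beta}$'s trilateration order, using the rational-rank-$D$ hypothesis together with genericity of $\p$ and the $L_{d,n}$ automorphism/image results (packaged in the paper as Theorem~\ref{thm:consist}/\ref{thm:verLoop} and Lemma~\ref{lem:partial-trilat2}) to force each matched $D$-block of $\pmb{\alpha}$ to be a scaled canonical trilateration block in $\p$.

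One small point of organization: your final move---``then apply Theorem~\ref{thm:punchline}''---is redundant. The inductive argument you describe already delivers the geometric conclusion $s\cdot\p_S = \q$ directly, not just the combinatorial matching $\pmb{\alpha}^- = s\cdot\pmb{\beta}$; each inductive step simultaneously identifies the new point of $\q$ with a specific point of $\p$ (via Lemma~\ref{lem:noSim}, which you did not name but is what pins down the overlap uniquely). In the paper both Theorems~\ref{thm:punchline} and~\ref{thm:punchlineALG} are deduced from the same inductive lemma (Lemma~\ref{lem:punchline1}), rather than one from the other, so invoking Theorem~\ref{thm:punchline} at the end would be circular or at best repetitive---as you yourself half-noticed with ``or really just re-running its proof.'' Also worth making explicit: the scale $s$ is fixed once and for all at the base step; in each subsequent step the similarity must fix $d+1$ already-located points and is therefore the identity, which is why a single global $s$ suffices.
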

\end{mdframed}

In this theorem, we make no prior assumptions on 
$\pmb{\alpha}$
and the number of vertices that are endpoints of edges in its support. The existence of $\q$ and 
$\pmb{\beta}$
with the appropriate properties is itself a certificate of correctness
(we still need to assume that $\p$ is generic).
Thus if we are able, in any way, to find a way to 
interpret a portion, which is called $\v^-$ in the statement 
of the theorem, of $\v$ using a nice
trilaterating ensemble $\pmb{\beta}$
we know that we have correctly realized a corresponding
part $\p_S$ of $\p$ (up to similarity).

Theorem~\ref{thm:punchlineALG}
provides the basis for a 
computational attack on this reconstruction process. In particular, we will establish the following.
\begin{mdframed}
Let the dimension be $d\ge 2$. 
Let $\p$ be a generic configuration of $n$ points, and
let
$\pmb{\alpha}$ be a $b$-bounded
path (resp. loop) measurement ensemble
that allows for 
trilateration. 
Suppose $\v = \la \pmb{\alpha}, \p \ra$. Then, given $\v$, there
is a 
trilateration-based algorithm, over a real computation model,
that reconstructs $\p$ up to congruence and vertex labeling.
\end{mdframed}

For fixed $d$,  this algorithm (over a real computation model)
will have worst case time 
complexity that  is polynomial 
in $(|\v|, b)$, though with a moderately large exponent.

\begin{figure}[ht]
	\centering
	\def\svgwidth{.9\textwidth}
	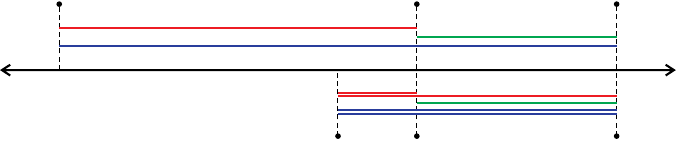
	\caption{Counterexample in $1$ dimension.
  The configuration $\p$ with the shown (upper) three edge measurements
  gives rise to the same length values as the configuration
  $\q$ with the shown (lower) three path measurements. 
  This behavior is stable; as $\p$ is perturbed, $\q$ can
  be appropriately perturbed to maintain this ambiguity, 
  and vice-versa for $\q$ perturbations.    
    }\label{fig:1dambiguity}
\end{figure}

Theorem~\ref{thm:punchline} 
fails for $d=1$. A simple counterexample
to the  theorem for the path case is shown in Figure~\ref{fig:1dambiguity}:
Let $\p_1 < \p_2 < \p_3$ be three generic points on the line.
Let $\alpha_1$ measure the edge $\{1,2\}$, 
$\alpha_2$ measure the edge $\{2,3\}$ and 
$\alpha_3$ measure the edge $\{1,3\}$.
This ensemble clearly allows for trilateration.
In this case we will have 
$\v = \la \pmb{\alpha},\p \ra =
 [ \p_2-\p_1, \p_3-\p_2, \p_3-\p_1]$. 
Now let 
$\q_1$ be arbitrary, and set 
$\q_2 := \q_1 + (\p_2-\p_1) - 1/2 (\p_3-\p_1)$ and 
$\q_3 := \q_1 + 1/2(\p_3-\p_1)$. 
This will give us
$\q_3-\q_2=  (\p_3-\p_1) - (\p_2-\p_1)=
\p_3-\p_2$. Let us also assume that $\p_3-\p_2<\p_2-\p_1$, then this will give us the ordering: $\q_1 < \q_2 < \q_3$. 
Now, let $\beta_1$ measure the path $(2,1,3)$, 
$\beta_2$ measure the edge $\{2,3\}$, and
$\beta_3$ measure the path $(1,3,1)$. 
Then in this case, we will also get 
$\v=\la \pmb{\beta},\q \ra$. But the two 
(edge multiset)
measurement ensembles $\pmb{\alpha}$ and $\pmb{\beta}$
are not related by a scale. 

Likewise, regarding Theorem~\ref{thm:punchlineALG},
let $\q$ be the underlying unknown generic configuration 
measured with the unknown $\beta_i$. 
Looking at these measurement values,
we might incorrectly assume that it comes from the 
reconstructed triangular,
and thus trilaterizable, measurements
described by the $\alpha_i$ on the reconstructed configuration $\p$.
Since we cannot uniquely
reconstruct a triangle on the line, this will kill off 
any attempts at using trilateration for reconstruction in $1$ dimension.

In the language we develop later, 
the failure described in this example essentially happens because  
the variety $L_{1,3}$ is reducible, and
thus Theorem~\ref{thm:prin2} does not apply. The relationship between
these
$\pmb{\alpha}$ and 
$\pmb{\beta}$ 
is not described by a linear  automorphism of $L_{1,3}$. Instead, the relationship is described by only a linear automorphism of one of its (planar) components. 

\begin{figure}[ht]
	\centering
	\def\svgwidth{.9\textwidth}
	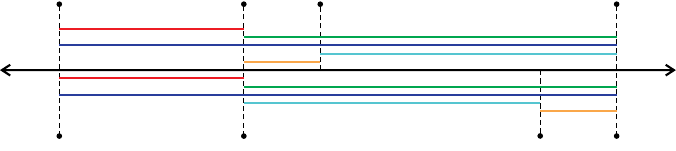
	\caption{$2$-flips ambiguity in $1$ dimension.    
    }\label{fig:2flips}
\end{figure}

There is a second way in which our theorems
fail in $1$ dimension, as demonstrated in 
Figure~\ref{fig:2flips}: 
Let $\p$ consist of $4$ points on a line and
$\pmb{\alpha}$ 
consist of $5$ of the $6$ possible edges. In this case, there
is a vertex, say $\p_4$, with only two measured edges, say
$\{2,4\}$ and $\{3,4\}$. 
If $\pmb{\beta}$
 is obtained from 
$\pmb{\alpha}$ by simply swapping the order of these two edges,
we can maintain $\v$ by appropriately re-locating 
the fourth point.  Essentially, in the unlabeled setting,  there are two ways we can
glue $\p_4$ and its two edges on the triangle of the first
three points. We return to this issue in Remark~\ref{rem:fail}.

We close out this section by remarking that our proofs establish
slightly stronger statements than Theorem \ref{thm:punchline}
and Theorem \ref{thm:punchlineALG}.  In particular, 
the edge multiset measurement ensembles 
$\pmb{\beta}$ in Theorem \ref{thm:punchline} 
and $\pmb{\alpha}$ in Theorem \ref{thm:punchlineALG} 
can, in fact, be any length functional measurement 
ensembles (see Definintion \ref{def:functionals} below), which 
are a generalization of edge multiset measurement ensembles.

\section{Measurement Varieties}\label{sec:varieties}

In this section, we will study the basic properties
of two related families of varieties, the squared and unsquared
measurement varieties. The structure of these varieties
will be critical to understanding 
the problem of 
reconstruction from
unlabeled measurements.
The squared variety is very well studied
in the literature, where it is often called the Cayley-Menger variety,
but the unsquared variety is much less so.
Since we are interested in integer sums of unsquared 
edge lengths, we will need to understand the structure of this
unsquared variety.
Although we are ultimately interested in measuring real lengths
in Euclidean space, we will pass to the complex setting where we can
utilize some tools from algebraic geometry.

\begin{definition}
\label{def:sms}
Let us index the coordinates of $\CC^N$ as $ij$, with
$i < j$ and both between $1$ and $n$.  We also fix an ordering 
on the $ij$ pairs to index the coordinates of $\CC^N$ 
using a single coordinate index with values
between $1$ and $N$.\footnote{This ordering choice does not matter as long 
as we are consistent.  It is there to let us switch between coordinates 
indexed by edges of $K_n$ and indexed using flat vector notation.  For $n=4$, $N=6$ we will 
use the order: $12,13,23,14,24,34$.}
\end{definition}

Let us begin with a 
\defn{complex configuration} $\p$ of $n$ points in $\CC^d$
with $d \geq 1$. 
%We will always assume  $n \geq d+2$.
There are $\edgecard$ vertex pairs (edges), along which we can measure
the complex \emph{squared} length as 
\ba
m_{ij}(\p) := \sum_{k=1}^{d}(\p^k_i-\p^k_j)^2
\ea
where $k$ indexes over the $d$ dimension-coordinates. Here, we measure
complex squared length using the complex squaring operation with no
conjugation. We consider the vector $[m_{ij}(\p)]$ over all of the vertex 
pairs, with $i<j$,
as a single point in $\CC^{\edgecard}$, which we denote as $m(\p)$.

\begin{definition}
Let $M_{d,n}\subseteq \CC^{\edgecard}$ be 
 of $m(\cdot)$ over all $n$-point complex configurations in $\CC^d$. 
This is called the \defn{Cayley-Menger} variety of $n$ points in $d$ dimensions.
We also call this the \defn{squared measurement variety} of $n$ points
in $d$ dimensions.
\end{definition}
When $n \le (d+1)$, then $M_{d,n}= \CC^{\edgecard}$.
(See also Proposition~\ref{prop:infind}
below.)

The next definition, though not needed in what follows, is given for context.
\begin{definition}
If we restrict the domain to  real configurations, then 
we call  under $m(\cdot)$ the \defn{Euclidean squared measurement set} denoted as 
$M^{\EE}_{d,n} \subseteq \RR^{\edgecard}$.
This set has real dimension 
$dn-C$. 
%(In the rigidity literature, this set is often denoted as $M$.)
\end{definition}

The following theorem, save for the last statement,
reviews some basic facts.
For more details, see~\cite{ciprian}
or~\cite{loopsAlg}. 
See Appendix~\ref{sec:geometry}
for our definitions of terms from
algebraic geometry.

\begin{theorem}
\label{thm:Mvariety}
Let $n \ge d+2$.
The set $M_{d,n}$
is linearly isomorphic to 
$\CS^{n-1}_d$,
the variety of complex, symmetric 
%$n-1$-by-$n-1$  
$(n-1)\times(n-1)$
matrices of rank $d$ or less.
Thus, $M_{d,n}$ is a
variety, and also defined over $\QQ$.
It is irreducible.
Its dimension is $dn-C$.
Its singular set $\sing(M_{d,n})$ consists of squared measurements of configurations
with affine spans of dimension strictly less than $d$.
If $\p$ is a generic complex configuration in $\CC^d$ or a 
generic configuration
in $\RR^d$, 
then $m(\p)$ is generic in 
$M_{d,n}$.
\end{theorem}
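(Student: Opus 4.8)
The plan is to verify Theorem~\ref{thm:Mvariety} one clause at a time, reducing everything to the linear isomorphism $M_{d,n}\cong \CS^{n-1}_d$ and then citing standard facts about determinantal varieties. First I would make the linear isomorphism explicit: fix vertex $n$ as a ``base point'' and consider the map sending a configuration $\p$ to the Gram matrix $G$ with entries $G_{kl} = \iprod{\p_k-\p_n}{\p_l-\p_n}$ for $k,l\in\{1,\dots,n-1\}$. This is a symmetric $(n-1)\times(n-1)$ matrix of rank at most $d$ (it factors through the $d\times(n-1)$ matrix of displacement vectors), and conversely any such matrix of rank $\le d$ is realized by some complex configuration, since a complex symmetric matrix of rank $r$ admits a (complex, non-conjugated) factorization $G = \trans{B}B$ with $B$ of size $r\times(n-1)$ — here is where working over $\CC$ rather than $\RR$ matters. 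The key algebraic identity is the polarization formula $m_{ij}(\p) = G_{ii}+G_{jj}-2G_{ij}$ (with the convention $G_{nn}=0$, $G_{nk}=0$), which is linear in the entries of $G$; one checks this linear map $G\mapsto m(\p)$ is a bijection between symmetric $(n-1)\times(n-1)$ matrices and $\CC^N$ (both have dimension $N = \binom{n}{2}$), and it is visibly defined over $\QQ$. Hence $M_{d,n}$ is the image of the rank-$\le d$ locus under an invertible $\QQ$-linear map, so it is a variety defined over $\QQ$.

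Next I would pull the remaining structural statements across this isomorphism from the classical theory of symmetric determinantal varieties $\CS^{m}_d$ (the locus of symmetric $m\times m$ matrices of rank $\le d$), which for $d<m$ is a well-known irreducible projective(-cone) variety of dimension $md - \binom{d}{2}$; specializing $m=n-1$ gives $\dim = d(n-1)-\binom{d}{2} = dn - \binom{d+1}{2} = dn-C$, matching the claim. Irreducibility of $\CS^m_d$ follows, e.g., from its being the image of the irreducible variety of $d\times m$ matrices under $B\mapsto \trans{B}B$, or from the standard parametrization; I would just cite \cite{ciprian} or \cite{loopsAlg} as the excerpt suggests. The singular locus of $\CS^{m}_d$ is exactly $\CS^{m}_{d-1}$ (matrices of rank $\le d-1$); translating back through the Gram correspondence, a configuration lands in $\sing(M_{d,n})$ iff its displacement-vector matrix has rank $\le d-1$, i.e.\ iff the affine span of $\p$ has dimension strictly less than $d$, which is the stated description of $\sing(M_{d,n})$.

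Finally, for the genericity statement — that $m(\p)$ is a generic point of $M_{d,n}$ whenever $\p$ is generic in $\CC^{dn}$ or in $\RR^{dn}$ — I would argue as follows. The map $m(\cdot)\colon \CC^{dn}\to M_{d,n}$ is a dominant polynomial map defined over $\QQ$ between irreducible $\QQ$-varieties. A standard fact (provable via the fibers of the map, or via the fact that a dominant $\QQ$-morphism sends $\QQ$-generic points to $\QQ$-generic points) is then that if $\p$ does not satisfy any nontrivial polynomial over $\QQ$, neither does $m(\p)$ beyond those cutting out $M_{d,n}$; that is, $m(\p)$ is generic in $M_{d,n}$. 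For the real case one additionally uses that $\RR$-generic points of $\CC^{dn}$ are in particular $\QQ$-generic, so the same conclusion applies; and that $m(\cdot)$ restricted to real configurations still dominates $M_{d,n}$ in the Zariski sense because the real configuration space is Zariski dense in $\CC^{dn}$. I expect the main obstacle to be stating the genericity-transfer step cleanly: one must be careful that ``generic in $M_{d,n}$'' means ``avoids every proper $\QQ$-subvariety of $M_{d,n}$'', and check that the preimage under $m(\cdot)$ of such a subvariety is a proper $\QQ$-subvariety of configuration space (which uses dominance plus irreducibility), so that a $\QQ$-generic $\p$ indeed avoids it. Everything else is bookkeeping against the determinantal-variety literature.
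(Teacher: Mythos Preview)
Your proposal is correct and matches the paper's approach: the paper simply cites \cite{ciprian} and \cite{loopsAlg} for the structural claims (linear isomorphism, irreducibility, dimension, singular locus) and invokes Lemma~\ref{lem:genPush} for the genericity statement, while you have spelled out the Gram-matrix/polarization isomorphism and the determinantal-variety facts that those references contain, and your genericity-transfer argument is precisely the content of Lemma~\ref{lem:genPush}. One small redundancy: for the real case you do not need the extra clause about real configurations dominating $M_{d,n}$; once a real generic $\p$ is generic in $\CC^{dn}$ (Definition~\ref{def:gen}), Lemma~\ref{lem:genPush} applies directly.
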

The last statement follows from Lemma~\ref{lem:genPush}.

\begin{remark}
We note, but will not need, the following:
For $d\ge 1$,
the smallest complex variety containing   
$M^{\EE}_{d,n}$ is $M_{d,n}$.
\end{remark}

As an example, the smallest interesting instance, 
$M_{1,3} \subseteq \CC^3$, is defined by the vanishing of the \defn{Cayley-Menger determinant}, that is, the determinant of the following matrix
\ba
\begin{pmatrix}
2m_{13}& (m_{13} + m_{23} - m_{12})\\
(m_{13} + m_{23} - m_{12})& 2m_{23}\\
\end{pmatrix}
\ea
where we use $(m_{12}, m_{13}, m_{23})$ to represent the coordinates of $\CC^3$~\cite{loopsAlg}.  

In the real setting, the square root of the Cayley-Menger determinant 
of $d+2$ points in $\RR^N$ 
is $(d+1)!2^{d+1}$ times the unsigned $(d+1)$-volume of the 
simplex formed by the points.  In this paper, we are 
interested in the case where $N=d$, so this volume must 
be zero.  Hence, the vanishing of the Cayley-Menger determinant
is a predicate showing that the input measurements arose from 
a configuration in dimension $d$.

Next we move on to unsquared lengths.

\begin{definition}
We define 
the \defn{squaring map} $s(\cdot)$ as the map from $\CC^{\edgecard}$ 
onto $\CC^{\edgecard}$ that
acts by squaring each of the $\edgecard$ coordinates of a point.
Let $L_{d,n}$ be the preimage of $M_{d,n}$ under the squaring map.
(Each point in $M_{d,n}$ has $2^{\edgecard}$ preimages in $L_{d,n}$, arising
through coordinate negations.)
We call this the \defn{unsquared measurement variety} of $n$ points
in $d$ dimensions.
\end{definition}

\begin{definition}
We can define the \defn{Euclidean length map}
of a real configuration $\p$ as
\ba
l_{ij}(\p) := \sqrt{\sum_{k=1}^{d}(\p^k_i-\p^k_j)^2}
\ea
where we use the positive square root.
We denote by $l(\p)$
the vector $[l_{ij}(\p)]$ over all vertex 
pairs.
\end{definition}
The next definition, though not needed in what follows, is given for context.
\begin{definition}
We call  of $\p$ under
$l$ the \defn{Euclidean unsquared measurement set} denoted as 
$L^{\EE}_{d,n} \subseteq \RR^{\edgecard}$.
Under the squaring map, we get
$M^{\EE}_{d,n}$. 
 We may consider $l(\p)$ either as a point in 
the real valued $L^{\EE}_{d,n}$
or as a point in 
the complex variety $L_{d,n}$.
\end{definition}

Indeed, $L^{\EE}_{d,n}$
is the set we are truly interested in,
but it will be easier to work with the whole variety $L_{d,n}$. For example,  Theorem~\ref{thm:prin2}
requires
us to work with varieties, and not, say,
with real ``semi-algebraic sets''.

\begin{remark}
The locus of $\LL$ where the edge lengths of a triangle,
$(l_{12}, l_{13}, l_{23})$, are held fixed is studied in 
beautiful detail in~\cite{marco}, where it is shown to be a Kummer surface.
\end{remark}

The following theorem,
save for the last statement,
is 
a result from our companion paper~\cite{loopsAlg}.
\begin{theorem}
\label{thm:Lvariety}
Let $n \ge d+2$.
$\LN$ is a  variety, defined over $\QQ$. 
It is pure dimensional, with dimension
$dn-C$.
Now additionally assume that $d \geq 2$.
$\LN$ is irreducible.
If $\bm$ is generic in $M_{d,n}$, then each point in 
$s^{-1}(\bm)$ is generic in $\LN$. If $\p$ is a generic configuration in 
$\RR^d$, then $l(\p)$ is generic in $\LN$.
\end{theorem}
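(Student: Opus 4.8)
The plan is to prove Theorem~\ref{thm:Lvariety} by leveraging the companion paper~\cite{loopsAlg} for all but the final statement, and then deducing the genericity claim from the already-available structural facts. Since \cite{loopsAlg} establishes that $L_{d,n}$ is a variety defined over $\QQ$, that it is irreducible of dimension $dn-C$ when $d\geq 2$, and (via the relationship to $M_{d,n}$ under the squaring map) that each point in $s^{-1}(\bm)$ is generic in $L_{d,n}$ whenever $\bm$ is generic in $M_{d,n}$, the only thing left to do is handle: ``If $\p$ is a generic configuration in $\RR^d$, then $l(\p)$ is generic in $L_{d,n}$.''

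First I would recall from Theorem~\ref{thm:Mvariety} (whose last statement follows from Lemma~\ref{lem:genPush}) that if $\p$ is a generic real configuration in $\RR^d$, then the vector of squared lengths $m(\p)$ is a generic point of $M_{d,n}$. This is exactly the genericity-pushforward phenomenon: the map $m(\cdot)$ is a polynomial map defined over $\QQ$ that dominates $M_{d,n}$, so it carries generic points to generic points. Next, I would observe that $l(\p)$ lies in $s^{-1}(m(\p))$ by definition of the squaring map and of the Euclidean length map: squaring each coordinate of $l(\p)$ returns $m(\p)$. Now apply the penultimate sentence of Theorem~\ref{thm:Lvariety} (the part imported from \cite{loopsAlg}): since $m(\p)$ is generic in $M_{d,n}$, every point of the fiber $s^{-1}(m(\p))$ is generic in $L_{d,n}$; in particular $l(\p)$ is. This chains the two genericity-preservation facts together.

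The one subtlety worth spelling out is that ``generic'' here means ``not satisfying any nontrivial polynomial over $\QQ$ that vanishes on the ambient variety'' — equivalently, lying outside every proper $\QQ$-subvariety. The argument above is then just: the image under $m(\cdot)$ of a generic point avoids all proper $\QQ$-subvarieties of $M_{d,n}$, and the fiber over such a point consists entirely of points avoiding all proper $\QQ$-subvarieties of $L_{d,n}$ (this second fact is where irreducibility of $L_{d,n}$ for $d\geq2$ is essential, since $s\colon L_{d,n}\to M_{d,n}$ is a finite surjective morphism of irreducible varieties defined over $\QQ$, and such a morphism pulls generic points back to generic points). I would cite Lemma~\ref{lem:genPush} for the general principle and \cite{loopsAlg} for its application to $s$.

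The main obstacle is not in this final deduction, which is short, but in the results being quoted: establishing irreducibility and the dimension count for $L_{d,n}$ when $d\geq2$ requires real work (the $d=1$ counterexamples with $L_{1,3}$ reducible, discussed around Figure~\ref{fig:1dambiguity}, show the hypothesis $d\geq2$ is genuinely needed), and that is precisely why it has been relegated to the companion paper. Given those inputs, the only care required here is to make sure the genericity transfer is valid in both directions of the chain $\p \rightsquigarrow m(\p) \rightsquigarrow l(\p)$; both steps are instances of the same lemma, applied first to the measurement map $m$ and then to the finite covering map $s$.
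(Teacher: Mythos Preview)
Your proposal is correct and matches the paper's approach: quote the companion paper for everything except the final sentence, then chain Theorem~\ref{thm:Mvariety} (genericity of $m(\p)$) with the fiber statement for $s$ to conclude that $l(\p)$ is generic in $L_{d,n}$.

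One small correction: you write that ``both steps are instances of the same lemma'' and cite Lemma~\ref{lem:genPush} for the general principle. That is not quite right. The first step ($\p$ generic $\Rightarrow$ $m(\p)$ generic) is indeed Lemma~\ref{lem:genPush}, a push-forward of genericity along a dominant map. The second step ($m(\p)$ generic $\Rightarrow$ $l(\p)\in s^{-1}(m(\p))$ generic) goes in the opposite direction and is an instance of Lemma~\ref{lem:genPull}, which pulls genericity back along an equi-dimensional dominant map; this is exactly what the paper cites. You describe the phenomenon correctly (``such a morphism pulls generic points back to generic points'') but point to the wrong lemma. Since you also invoke the penultimate sentence of the theorem directly, the argument still goes through; just replace the reference to Lemma~\ref{lem:genPush} with Lemma~\ref{lem:genPull} for the second step.
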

The last statement of this theorem follows
from Lemma~\ref{lem:genPull}
and Theorem~\ref{thm:Mvariety}.

In $1$ dimension, the variety $L_{1,3}$ is reducible
and thus  has no generic points. We elaborate on this below.

\begin{remark}
We note, but will not need the following:
For $d\geq 2$,
the smallest complex variety containing   
$L^{\EE}_{d,n}$ is $L_{d,n}$.
\end{remark}

Returning to our minimal example:
The variety $L_{1,3} \subseteq \CC^3$ is defined by the vanishing of the determinant of the
following matrix
\ba
\begin{pmatrix}
2l^2_{13}& (l^2_{13} + l^2_{23} - l^2_{12})
\\
(l^2_{13} + l^2_{23} - l^2_{12})& 2l^2_{23}
\end{pmatrix}
\ea
where we use $(l_{12}, l_{13}, l_{23})$ to represent the coordinates of $\CC^3$.

\begin{figure}[ht]
	\begin{center}
		\includegraphics[width=2.5in]{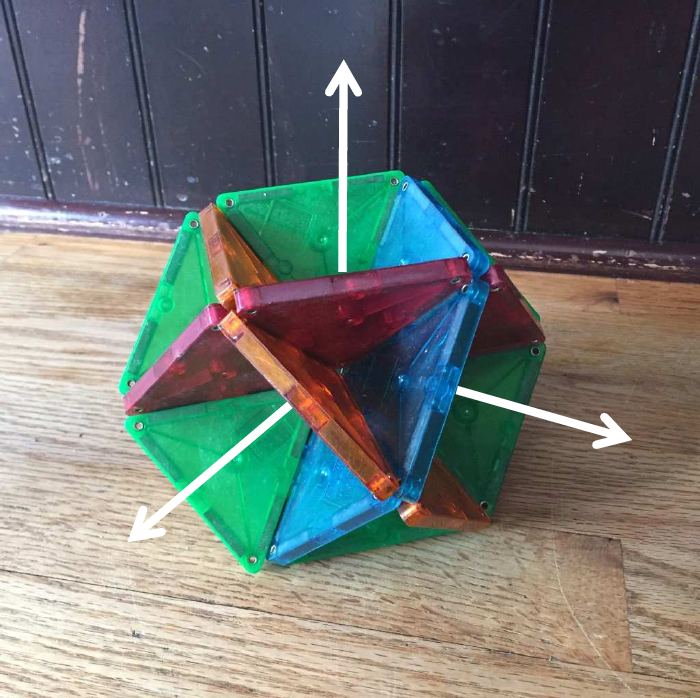}
	\end{center}
	\caption{A model of the real locus of $L_{1,3}$, a subset of $\RR^3$. It comprises $4$ planes. Coordinate axes are in white.}\label{fig:l13}
\end{figure}

\begin{remark}
It turns out that $L_{1,3}$ is reducible and consists of the four hyperspaces defined,
respectively, by the vanishing of one of the following equations:
\ba
l_{12} + l_{23} - l_{13} \\
l_{12} - l_{23} + l_{13} \\
-l_{12} +l_{23} + l_{13} \\
l_{12} + l_{23} + l_{13} 
\ea
This reducibility can make the $1$-dimensional case quite different from dimensions 2 and 3, as already discussed in Section~\ref{sec:results}. See also Figure~\ref{fig:l13}.

Notice that the first octant of the real locus of $3$ of these hyperspaces 
arises as the Euclidean lengths of a triangle in $\RR^1$ (that is,
these make up $L^{\EE}_{1,3}$). 
The specific hyperplane
is determined by the order of the $3$ points on the line.
\end{remark}

\section{The case of edge measurement ensembles}
\label{sec:warm}

We will first look at
the case when our measurement ensemble consists only of
edge measurements. 
Within this, we will start with the case where the measurement ensemble
consists of the complete edge set of cardinality $\edgecard$, which was 
 studied carefully in~\cite{BK1}.
The results from \cite{BK1} have been cleverly applied in~\cite{echo} to determine
the shape of a room from acoustic echo data
(this connection is made explicit in~\cite{dokThesis}).
Then we will consider the case of a trilateration 
ensemble of edges, and prove the correctness of the
TRIBOND
algorithm described in \cite{dux2}.

\subsection{First case: complete graphs $K_n$}

In this section, we will consider 
an {edge measurement ensemble} 
$G:=(E_1,\dots,E_k)$, a
finite sequence of distinct edges of $K_n$.
This is the same thing as a graph on $n$ vertices
with some ordering on its edges.
For an edge measurement ensemble 
$G$, we will write
$\la G, \p \ra^2$ to denote the sequence
of squared edge lengths.

We start with  a central result of
Boutin and Kemper~\cite{BK1}, stated 
in our terminology.

\begin{theorem}
\label{thm:bkMain}
Let the dimension be  $d$, and $n \geq d+2$. 
Let $\p$ be a generic configuration of $n$ points
in $d$ dimensions. Let
$\v= \la G, \p \ra^2$, where 
$G$
is an edge measurement ensemble made up of exactly the $\edgecard$ edges of
$K_n$ in some order.

Suppose there is a 
configuration $\q$, 
also of $n$ points,
along with 
an edge
measurement ensemble $H$, 
where 
$H$
is an edge measurement ensemble made up of exactly the $\edgecard$ edges of
$K_n$, in some other order such that
$\v=\la H,\q \ra^2$.

Then 
there is a 
%unique 
vertex relabeling  of $\q$ such that,
up to 
%unique 
congruence,
$\q=\p$.
Moreover, under this vertex relabeling,
$G=H$.
\end{theorem}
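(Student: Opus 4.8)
The plan is to apply the general principle of Theorem~\ref{thm:prin2} to the squared measurement variety $M_{d,n}$, using the structure result of Theorem~\ref{thm:Mvariety} together with the edge-permutation rigidity statement (referred to in the overview as Theorem~\ref{thm:bk-linear-s}). Set $V := M_{d,n} \subseteq \CC^{N}$, which by Theorem~\ref{thm:Mvariety} is an irreducible variety defined over $\QQ$. Given the hypotheses, let $\genericpoint := \la G, \p \ra^2$ be the correctly ordered squared-length vector; since $\p$ is a generic configuration in $\RR^d$, the last statement of Theorem~\ref{thm:Mvariety} gives that $\genericpoint$ is a generic point of $V$. Now the two ensembles $G$ and $H$ are both orderings of the complete edge set of $K_n$, so there is a permutation $\sigma$ of the $N$ coordinates with $\la H, \q\ra^2 = \sigma(\la G, \q\ra^2)$ for every configuration; equivalently, thinking of $\sigma$ as a $0/1$ permutation matrix $\A$ acting linearly on $\CC^N$, the hypothesis $\v = \la H, \q\ra^2$ says $\A^{-1}(\genericpoint) = \la G, \q\ra^2 = m(\q) \in M_{d,n}$ (as $\q$ is an $n$-point configuration in $\RR^d \subseteq \CC^d$). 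Hence $\A^{-1}$ is a bijective linear map, defined over $\QQ$, carrying the generic point $\genericpoint$ of $V$ back into $V$.

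By Theorem~\ref{thm:prin2}, $\A^{-1}(V) = V$, i.e.\ the coordinate permutation $\A$ (equivalently, the edge permutation $\sigma$ relating $G$ and $H$) is a linear automorphism of $M_{d,n}$. Here is where I invoke the Boutin--Kemper classification of such automorphisms (Theorem~\ref{thm:bk-linear-s} in the overview, which will be recalled/proved in this section): any edge permutation that induces an automorphism of $M_{d,n}$ is realized by a relabeling of the $n$ vertices, i.e.\ $\sigma$ is the action on $\binom{[n]}{2}$ of some permutation $\pi \in S_n$ of the vertex set. Relabel the vertices of $\q$ by $\pi^{-1}$; then $H$, reindexed by this relabeling, becomes exactly $G$ as an ordered edge sequence, giving the claim $G = H$ under the relabeling.

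It remains to upgrade the equality of squared length data to congruence of the configurations. After the relabeling, $\q$ (reindexed) satisfies $\la G, \q\ra^2 = \la G, \p\ra^2$, i.e.\ $m(\q) = m(\p)$ as points of $M_{d,n}$, since $G$ contains all $N$ edges. Equality of all $\binom{n}{2}$ squared Euclidean distances between two real $n$-point configurations implies they are related by a Euclidean congruence: this is the classical fact (Cayley--Menger / Gram matrix; see~\cite{schoen}) that the full squared-distance matrix determines a real configuration up to an isometry of $\RR^d$ (possibly including reflections, but note that as $\p$ is generic its affine span is all of $\RR^d$, so no extra subtlety arises). Thus $\q$ is congruent to $\p$, completing the proof.

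\textbf{Main obstacle.} The genuinely substantive input is the classification step: showing that an edge permutation inducing a linear automorphism of $M_{d,n}$ must come from a vertex relabeling. Everything else is either a direct citation (Theorem~\ref{thm:prin2}, Theorem~\ref{thm:Mvariety}, \cite{schoen}) or bookkeeping about how permutations act on coordinates. Since the excerpt signposts this as Theorem~\ref{thm:bk-linear-s}, in the write-up I would state that theorem and defer its proof (it rests on analyzing how such an automorphism must act on the singular locus $\sing(M_{d,n})$ and on low-dimensional strata, where the combinatorics of which coordinate-subsets can be simultaneously "small" forces the permutation to respect the vertex structure); the reduction above is the easy part, and I expect the real work to live entirely in that classification lemma.
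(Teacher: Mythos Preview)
Your proposal is correct and follows essentially the same route as the paper: the paper's proof is the one-line remark that Theorem~\ref{thm:bk-linear-s} together with Theorem~\ref{thm:prin2} directly yield Theorem~\ref{thm:bkMain}, with the final congruence step handled by the classical Lemma~\ref{lem:mds} (your citation of \cite{schoen} plays the same role). The only cosmetic difference is that you implicitly take $G$ to be the canonical edge ordering so that $\la G,\cdot\ra^2 = m(\cdot)$; stating this ``without loss of generality'' explicitly would remove the one small ambiguity in your write-up.
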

By way of comparison, the labeled setting
is classical.  Recall that  configurations 
$\p$ and $\q$ of $n$ points in dimension $d$ 
are congruent if there is a Euclidean isometry $T$ of 
$\RR^d$ so that $\q_i = T(\p_i)$ for all $1\le i\le n$.
\begin{lemma}[{\cite{YH38}}]\label{lem:mds}
Suppose that $\p$ and $\q$ are configurations
of $n$ points and that for all $N$ edges $ij$ of $K_n$,
we have $|\p_i - \p_j| = |\q_i - \q_j|$.  Then 
$\p$ and $\q$ are congruent.
\end{lemma}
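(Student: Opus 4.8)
\textbf{Proof plan for Lemma~\ref{lem:mds}.}

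The plan is to reduce the statement to the rank characterization of the squared measurement variety already recorded in Theorem~\ref{thm:Mvariety}, combined with the classical fact (see \cite{schoen}) that a configuration is recovered up to congruence from its Gram matrix relative to a basepoint. First I would translate the distance hypothesis into a statement about squared distances: since $|\p_i-\p_j| = |\q_i-\q_j|$ for every edge $ij$ of $K_n$, we also have $m_{ij}(\p) = m_{ij}(\q)$ for all $ij$, i.e. $m(\p) = m(\q)$ as points of $\RR^{\edgecard}$.

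The key step is then to pass through the linear isomorphism of Theorem~\ref{thm:Mvariety} between $M_{d,n}$ and symmetric $(n-1)\times(n-1)$ matrices of rank at most $d$. Concretely, translate both configurations so that $\p_n = \q_n = 0$, and form the Gram matrices $\G^{\p}$ and $\G^{\q}$ with entries $\iprod{\p_i}{\p_j}$ and $\iprod{\q_i}{\q_j}$ for $i,j \in \{1,\dots,n-1\}$. The standard polarization identity $\iprod{\p_i}{\p_j} = \tfrac12\bigl(m_{in}(\p) + m_{jn}(\p) - m_{ij}(\p)\bigr)$ (with $m_{ii}:=0$) expresses each Gram entry as a fixed rational linear combination of the squared edge lengths. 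Since $m(\p) = m(\q)$, this gives $\G^{\p} = \G^{\q}$.

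The final step is the classical recovery: if $\G^{\p} = \G^{\q}$, then writing $\G^{\p} = \trans{\P}\P$ and $\G^{\q} = \trans{\Q}\Q$ for the matrices $\P,\Q$ whose columns are the translated points, we have $\trans{\P}\P = \trans{\Q}\Q$, so $\P$ and $\Q$ have the same column space dimension and there is an orthogonal matrix $\O$ (extend an isometry between the column spans to all of $\RR^d$) with $\Q = \O\P$. Undoing the translations, $\q$ is obtained from $\p$ by an orthogonal transformation followed by a translation, hence the two configurations are congruent. I do not expect any genuine obstacle here; the only point requiring minor care is the degenerate case where the affine span has dimension strictly less than $d$, but the argument via orthogonal extension of an isometry of subspaces handles that uniformly, so no separate case analysis is needed.
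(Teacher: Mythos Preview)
The paper does not actually prove Lemma~\ref{lem:mds}; it simply records it as a classical result with a citation to \cite{YH38} (and implicitly to \cite{schoen}). Your argument is correct and is exactly the standard Gram-matrix proof underlying those references: pass to squared distances, recover the Gram matrix via polarization after translating a basepoint to the origin, and then use the fact that two real matrices with the same Gram matrix differ by an orthogonal transformation. There is nothing to compare against in the paper itself, and your handling of the lower-dimensional affine span via extension of the isometry is the right way to cover all cases.
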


Boutin and Kemper prove
Theorem~\ref{thm:bkMain}  using
a characterization of  
permutation automorphisms of $M_{d,n}$.

\begin{definition}\label{def:linear-automorphism}
A \defn{linear automorphism} of a variety 
$V$ in $\CC^N$ is map that bijectively takes $V$ to itself 
that arises as the
restriction of 
a non-singular linear transformation acting on $\CC^N$.
\end{definition}
In our setting, $V$ will always have a full linear span in $\CC^N$. 
So a linear automorphism on $V$ will uniquely correspond to a linear map 
acting on $\CC^N$. Thus, we may identify a
linear automorphism with a linear map on $\CC^N$.
In our setting, the embedding space $\CC^N$ is equipped
with a fixed set of coordinate axes
that are associated with the edges of $K_n$. 
Thus, we may identify a linear transformation with the complex $N\times N$ matrix 
representing it in the standard basis of $\CC^N$.
We will freely use the symbol $\A$ to represent
a linear automorphism, a linear transformation acting  
on $\CC^N$, or its representing matrix as needed.

\begin{definition}\label{def:gen-perm}
An $N\times N$ matrix $\P$ is a
\defn{permutation matrix} if each row and 
column has a single non-zero entry, and this entry
is $1$.
\end{definition}
\begin{definition}
A permutation $\pi$ of the coordinate axes of $\CC^N$
is \defn{induced by a vertex relabeling} if, under the 
association between the edges of the complete graph $K_n$
and the coordinate axes of $\CC^N$ from Definition \ref{def:sms}, 
there is a permutation $\sigma$ of the vertices of $K_n$ so that, 
for all edges $ij$, $\pi(ij) = \sigma(i)\sigma(j)$.
\end{definition}

\begin{definition}
An $N\times N$ 
permutation matrix $\P$
is 
\defn{induced by a vertex relabeling}
if it  corresponds to a 
permutation, $\pi$, of the edges of $K_n$, that is induced 
by a vertex relabeling.
\end{definition}

The key result of~\cite{BK1} is the following:
\begin{theorem}[{\cite[Lemma 2.4]{BK1}}]\label{thm:bk-linear-s}
Suppose that $\A$ is a permutation matrix
that gives rise to a linear automorphism of $M_{d,n}$.  Then 
$\A$ is induced by a vertex relabeling.
\end{theorem}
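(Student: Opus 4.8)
The plan is to work in the linear model of $M_{d,n}$ provided by Theorem~\ref{thm:Mvariety}: via a fixed $\QQ$-linear change of coordinates, $M_{d,n}$ becomes $\CS^{n-1}_d$, the variety of symmetric $(n-1)\times(n-1)$ matrices of rank $\le d$. Concretely, one picks a base vertex, say vertex $n$, and sends the squared-length vector $(m_{ij})$ to the Gram matrix $X$ with entries $X_{ii} = m_{in}$ and $X_{ij} = \tfrac12(m_{in}+m_{jn}-m_{ij})$ for $i,j \in \{1,\dots,n-1\}$; then $m(\p) \in M_{d,n}$ iff $X = \trans{P}P$ for a $d\times(n-1)$ matrix $P$ (the centered configuration), i.e.\ iff $\rank X \le d$. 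An edge permutation $\A$ that is a linear automorphism of $M_{d,n}$ conjugates to a linear automorphism $\widetilde{\A}$ of $\CS^{n-1}_d$.

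The core of the argument is the classical description of the linear automorphisms of the determinantal variety $\CS^{n-1}_d$ (the ``rank $\le d$'' locus of symmetric matrices), for $1 \le d \le n-2$: every such automorphism has the form $X \mapsto \trans{S} X S$ for some invertible $(n-1)\times(n-1)$ matrix $S$. The standard route to this is first to pass to the singular locus, which is $\CS^{n-1}_{d-1}$, and iterate down to $\CS^{n-1}_1$ (the cone over the degree-two Veronese), or alternatively to use that the span of $\CS^{n-1}_1$ is everything and that $\CS^{n-1}_1$ is cut out by the $2\times2$ minors; a linear map preserving the variety of rank-one symmetric matrices, i.e.\ preserving $\{vv^{\top}\}$ projectively, must send rank-one to rank-one, hence is $v \mapsto Sv$ up to scalar on the underlying space, giving $X \mapsto \trans{S}XS$. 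So the first main step is: identify $\widetilde{\A}$ with a congruence $X \mapsto \trans{S} X S$.

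The second step uses that $\A$ is not an arbitrary linear automorphism but an \emph{edge permutation} — it permutes the $N$ coordinate axes of $\CC^N$. Translating this rigidity through the coordinate change, one shows the matrix $S$ must be (a scalar multiple of) a signed permutation that, moreover, fixes the base vertex up to the symmetry of the construction; the combinatorics of which entries of $X$ are ``diagonal'' ($m_{in}$) versus ``off-diagonal'' ($m_{ij}$) forces $S$ to be an honest permutation matrix on $\{1,\dots,n-1\}$, which then extends to a permutation $\sigma$ of all $n$ vertices. Finally one checks that the edge permutation induced on $\{i,j\}\mapsto\{\sigma(i),\sigma(j)\}$ by this $\sigma$ agrees with the original $\A$ on all $N$ coordinates; since both are permutations of the same coordinate set and they agree on the locus $M_{d,n}$, which is not contained in any coordinate hyperplane, they are equal. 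Hence $\A$ is induced by the vertex relabeling $\sigma$.

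The main obstacle is the classification of linear automorphisms of the symmetric determinantal variety $\CS^{n-1}_d$ — keeping careful track of the degenerate low-dimensional cases (e.g.\ $d = n-2$, where $\CS^{n-1}_d$ is a hypersurface cut out by a single determinant, and the automorphism group could \emph{a priori} be larger) and of the base cases of the induction on $d$. Boutin and Kemper handle this directly; an alternative is to cite the known structure of $\Aut(\CS^{n-1}_d)$ from the companion paper~\cite{loopsAlg}. Once $\widetilde{\A}$ is pinned down as a congruence, the descent to a vertex relabeling is a bookkeeping exercise in how edge permutations interact with the Gram-matrix coordinates, and presents no real difficulty.
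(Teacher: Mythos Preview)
The paper does not prove Theorem~\ref{thm:bk-linear-s}; it is stated as a citation of \cite[Lemma~2.4]{BK1} and used as a black box. So there is no in-paper proof to compare your proposal against.

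That said, your route is heavier than what is needed and heavier than what Boutin--Kemper actually do. You propose to first classify \emph{all} linear automorphisms of $\CS^{n-1}_d$ as congruences $X\mapsto \trans{S}XS$, and only afterwards exploit that $\A$ is a coordinate permutation. The full classification is a nontrivial result in its own right (and, as you note, the hypersurface case $d=n-2$ needs care); the paper itself remarks that the linear automorphism group of $M_{d,n}$ is ``quite large'', and the companion paper~\cite{loopsAlg} is devoted to sorting out such automorphism groups. Boutin--Kemper's original argument is more direct and combinatorial: because $\A$ is assumed from the start to be an edge permutation, one can work with how edge permutations act on specific degenerate configurations (e.g., configurations with coincident points, which produce zero patterns in $m(\p)$) and on the Cayley--Menger relations, and deduce that the permutation on edges must come from a permutation on vertices, without ever invoking the structure of $\Aut(\CS^{n-1}_d)$.

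Your approach is not wrong, and would succeed once the determinantal-automorphism classification is in hand; it just imports a stronger theorem than the statement requires, whereas the cited source gets there by exploiting the permutation hypothesis earlier.
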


Indeed, Theorem~\ref{thm:bk-linear-s} together with Theorem~\ref{thm:prin2}
directly provide a proof for Theorem~\ref{thm:bkMain}.

\begin{remark}
\label{rem:bkg}
Suppose for some specific $(G,\p)$, there is also
an
$(H,\q)$ with the same edge measurements $\v$, 
where $G$ and $H$ are comprised of different orderings of the edges of $K_n$, but
$H$ is not related to $G$ via a vertex relabeling.
Then $\v$ lies in both $M_{d,n}$ and in 
$\P_{HG}(M_{d,n})$, where $\P_{HG}$ is a permutation
matrix that does not give rise to a linear  automorphism of $M_{d,n}$.
So this $\v$, by virtue of it also being in 
$\P_{HG}(M_{d,n})$, is  
not generic in $M_{d,n}$ and thus
$\p$ is not a generic configuration
(Lemma~\ref{lem:genPush}).
\end{remark}

\begin{remark}
\label{rem:noPsdTest}
Theorems~\ref{thm:bk-linear-s} and \ref{thm:prin2}
tell us that, for a generic $\p$,
there will be only one ordering (up to vertex labeling) of the $N$ squared lengths that will 
give us a point in $M_{d,n}$. Testing for membership in $M_{d,n}$
is just a rank test, and does not require any PSD testing. Since, 
by assumption, $\p$ exists and 
is real, its correctly ordered squared lengths must
then automatically satisfy any relevant PSD conditions.
\end{remark}
\subsection{Small images}

We next wish to show that if our point configuration is generic, and we have an ordered subsequence of 
$D$ edge lengths that are consistent with $K_{d+2}$, then indeed, it must arise due to
exactly this subgraph. Using Theorem~\ref{thm:bkMain}, we 
can then uniquely reconstruct these
$d+2$ points.

To this end, 
our first step is to establish  that if the $D$ edges do not
form a $K_{d+2}$ graph, then as we vary over $\p$, we should be able
to vary each of these $D$ numbers independently.

\begin{definition}
\label{def:maps}
Let $d$ be some fixed dimension. 
Let $E:= (E_1,\ldots, E_k)$ be  an edge measurement 
ensemble over $n$ vertices.
The ordering on the edges of $E$
fixes  an association between each edge in $E$ 
and a coordinate axis of $\CC^{k}$. 
Let $m_E(\p) :=
\la E, \p \ra^2$
be the map from $d$-dimensional 
configuration space to $\CC^{k}$
measuring the squared lengths of the edges
of $E$. 

We denote by $\pi_{{E}}$
the linear map  from $\CC^N$
to $\CC^{k}$ 
that forgets the edges not in $E$, and is consistent with the 
ordering of $E$.
Specifically, we have an association between each edge of
$K_n$ and an index in $\{1,\dots,N\}$, and thus we can think of each 
$E_i$ as simply its index in $\{1,\dots,N\}$. Then,
$\pi_{{E}}$ is defined by the conditions:
$\pi_{{E}}(e_j) = 0$ when $j\in \bar{E}$
and
$\pi_{{E}}(e_j) = e'_i$ when $E_i=j$,
where 
$\{e_1, \ldots, e_N\}$ denotes  the coordinate basis for $\CC^N$
and
$\{e'_1, \ldots, e'_k\}$ denotes  the coordinate basis for $\CC^k$.
We call $\pi_{{E}}$ an \defn{edge  map}.

The map $m_E(\cdot)$ 
is simply the composition of the complex measurement map $m(\cdot)$ 
and $\pi_{{E}}$.

Finally, we denote by $M_{d,E}$ the Zariski closure of 
of $m_E(\cdot)$ over all $d$-dimensional configurations. 
\end{definition}

\begin{definition}
\label{def:ind}
We say an edge set  $E$  is \defn{infinitesimally independent} in $d$
dimensions if, starting from a generic complex configuration 
$\p$ in $\CC^d$, we can
\emph{differentially}
vary each of the $|E|$ 
squared lengths independently by
appropriately differentially varying our  configuration $\p$. 
Formally, this means that the 
image 
of the differential of $m_E(\cdot)$ at 
a generic $\p$ is $|E|$-dimensional.
This exactly
coincides with the notion of infinitesimal independence
from graph 
rigidity theory~\cite{L70}.

An edge set that is 
not infinitesimally independent in $d$ dimensions is 
called \defn{infinitesimally dependent} in $d$ dimensions.
Note that in this case the rank of the differential, $dm_E$,  can never rise to 
$|E|$.
\end{definition}

The following is implicit in the rigidity theory literature. 
\begin{proposition}
\label{prop:infind}
An edge measurement ensemble
$E$ is infinitesimally  independent in $d$ dimensions
iff  of $m_E(\cdot)$ over all complex 
configurations of $n$ points has dimension $|E|$.  
\end{proposition}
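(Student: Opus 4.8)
\textbf{Proof proposal for Proposition~\ref{prop:infind}.}

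The plan is to relate the dimension of the image of the algebraic map $m_E(\cdot)$ to the rank of its differential at a generic point, and then invoke the definition of infinitesimal independence directly. First I would recall the standard fact from algebraic geometry that for a polynomial (hence everywhere-defined, everywhere-smooth as a map of affine spaces) map $f : \CC^M \to \CC^k$ defined over $\QQ$, the dimension of the Zariski closure of the image equals the maximal rank of the Jacobian $df_{\x}$ as $\x$ ranges over $\CC^M$, and moreover this maximal rank is attained at every generic point of the domain (the locus where the rank drops is a proper algebraic subset, cut out by the vanishing of the appropriate-size minors, which are polynomials with rational coefficients). Apply this with $M = dn$, $f = m_E(\cdot)$, noting that $m_E$ is indeed a polynomial map defined over $\QQ$ since each squared length $m_{ij}(\p)$ is a polynomial with integer coefficients in the coordinates of $\p$.

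Next I would chase the two directions. If $E$ is infinitesimally independent in $d$ dimensions, then by Definition~\ref{def:ind} the image of $dm_E$ at a generic $\p$ is $|E|$-dimensional, i.e.\ the Jacobian has full rank $|E|$ at a generic point; by the fact just quoted, the image of $m_E(\cdot)$ over all complex configurations then has dimension exactly $|E|$ (it cannot exceed $|E|$ since the target is $\CC^{|E|}$). Conversely, if that image has dimension $|E|$, then the generic rank of $dm_E$ is $|E|$, so at a generic $\p$ the differential is surjective onto $\CC^{|E|}$, which is precisely the statement that we can differentially vary the $|E|$ squared lengths independently; hence $E$ is infinitesimally independent. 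This also justifies the remark in Definition~\ref{def:ind} that when $E$ is infinitesimally dependent the rank can never rise to $|E|$: the generic rank \emph{is} the maximal rank.

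The only subtlety — and I expect it to be the main point requiring care rather than a genuine obstacle — is the interplay between the real and complex genericity notions and the word ``generic'' as used in Definition~\ref{def:ind}. The differential $dm_E$ has a Jacobian matrix whose entries are $\QQ$-polynomials in $\p$, so its rank is a lower-semicontinuous function that attains its maximum on the complement of a $\QQ$-algebraic set; thus ``generic real configuration,'' ``generic complex configuration,'' and ``generic point of $\CC^{dn}$'' all see the same maximal rank, and the argument is insensitive to which flavor of generic point one uses. One should also note that this maximal rank equals the dimension of $M_{d,E}$ as defined in Definition~\ref{def:maps} (the Zariski closure of the image), since the image of a morphism contains a Zariski-dense constructible set of its closure and a dominant morphism onto a variety of dimension $r$ has generic fiber dimension $dn - r$, forcing generic Jacobian rank $r$. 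With these identifications in place the proposition is immediate; the proof is essentially a dictionary entry translating between the rigidity-theoretic language of ``independent edges'' and the algebro-geometric language of ``image dimension.''
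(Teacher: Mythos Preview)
Your proposal is correct and takes essentially the same approach as the paper: both identify the dimension of the image of $m_E$ with the generic rank of its Jacobian, the paper citing Sard's theorem from Harris and you citing the equivalent standard fact about polynomial maps. Your version is more explicit about the two directions and about the real/complex genericity issue, but the underlying argument is the same.
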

\begin{proof}[Proof sketch]
The basic principle is that the generic rank of the differential tells us the dimension of .
In particular we consider the map $m_E(\cdot)$.
First remove the non-smooth points of , and then remove
the preimages
of these non-smooth points from the domain (all non-generic).
Sard's Theorem (e.g.,~\cite[Theorem 14.4]{harris}) then 
tells us that the inverse image of 
every generic point in this image
consists entirely
of configurations $\p$ where the differential
has rank equal to the dimension of  of $m_E(\cdot)$.
\end{proof}

\begin{remark}\label{rem:complex-rigid}
These notions are usually studied in the real setting, but the
tools used in the proof sketch above 
work the same way in the complex setting.  Complexification 
is used to study rigidity problems in, e.g., \cite{ST10,cgr,OP07,ciprian}.
\end{remark}

The following is a standard result from rigidity theory.  
\begin{proposition}
\label{prop:simplex}
Let $E$ be an edge measurement ensemble (with all its edges
distinct).
Suppose $|E| \le \binom{d+2}{2}$ and $E$ is 
infinitesimally dependent
in $d$ dimensions.
Then $|E| = \binom{d+2}{2}$ and
$E$ consists of the edges of a  $K_{d+2}$ subgraph (in some order).
\end{proposition}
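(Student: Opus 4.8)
The statement is a standard fact from combinatorial rigidity, so the plan is to reduce it to the well-known degree-of-freedom count for generic rigidity in $\RR^d$ (equivalently $\CC^d$, by Remark~\ref{rem:complex-rigid}). First I would recall the Maxwell--Laman-type dimension count: for an edge set $E$ on vertex set $V(E)$ with $|V(E)| = m$ vertices, the rank of the rigidity matrix (equivalently, by Proposition~\ref{prop:infind}, the dimension of the image of $m_E(\cdot)$) is at most $dm - \binom{d+1}{2}$ whenever $m \ge d+1$, and is at most $\binom{m}{2}$ always (it cannot exceed the number of edges, and on $\le d+1$ vertices every edge set is independent so the bound is exactly $\binom{m}{2}$). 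So if $E$ is infinitesimally dependent, then $|E|$ strictly exceeds the rank, and hence
\[
|E| > \min\!\left\{\binom{m}{2},\; dm - \binom{d+1}{2}\right\},
\]
where $m = |V(E)|$ (using that $E$ has distinct edges, so $|E| \le \binom{m}{2}$).

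Next I would run the counting argument. Since $|E| \le \binom{d+2}{2}$ by hypothesis, combining with $|E| \le \binom{m}{2}$ forces nothing yet, but combining with $|E| > dm - \binom{d+1}{2}$ (the relevant bound once $m \ge d+1$) gives $dm - \binom{d+1}{2} < \binom{d+2}{2}$, i.e. $dm < \binom{d+2}{2} + \binom{d+1}{2} = d(d+2)$, so $m < d+2$, i.e. $m \le d+1$. But on $m \le d+1$ vertices every edge set is infinitesimally independent (the affine span is generically all of $\RR^{m-1} \subseteq \RR^d$, and a simplex on $\le d+1$ points has independent edge set), contradicting dependence — unless the case $m \ge d+1$ bound didn't apply, i.e. we must instead have $m = d+2$ and the binding constraint is $|E| \le \binom{m}{2} = \binom{d+2}{2}$. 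Tracking the inequalities: dependence forces $|E| > dm - \binom{d+1}{2}$, and with $m \le d+2$ and $|E| \le \binom{d+2}{2}$ this pins $m = d+2$ and $|E| = \binom{d+2}{2}$ exactly. Having $m=d+2$ vertices and $\binom{d+2}{2}$ distinct edges among them means $E$ is all of $K_{d+2}$, which is the conclusion.

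I would present this cleanly by first isolating, as the key lemma, the statement: \emph{if $E$ is infinitesimally dependent in $d$ dimensions then $E$ spans at most $d+2$ vertices} — or more precisely that a minimal (circuit) dependent set spans exactly $d+2$ vertices and is $K_{d+2}$ — and then note that any dependent $E$ contains a circuit $E'$, and since $|E| \le \binom{d+2}{2} = |K_{d+2}|$ and $E' \subseteq E$ with $|E'| = \binom{d+2}{2}$, we get $E = E' = K_{d+2}$. This circuit-structure fact is exactly \cite[Corollary 2.6.2]{GSS}, so I can cite it; the self-contained version just needs the two dimension bounds above plus Proposition~\ref{prop:infind} to translate "dependent" into "image dimension $< |E|$".

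The main obstacle is not conceptual but bookkeeping: one must be careful that the count $dm - \binom{d+1}{2}$ is the correct generic rank bound (it requires $m \ge d+1$; for $m \le d$ one uses $dm - \binom{m}{2}$ or simply that all sets are independent), and one must handle the edge case $m = d+1$ where $dm - \binom{d+1}{2} = \binom{d+1}{2}$ equals the total number of edges, so $K_{d+1}$ is independent — this is why the dependent circuit must jump up to $d+2$ vertices. Since the paper is content to cite \cite{GSS} here, the cleanest route is to state the reduction to Proposition~\ref{prop:infind} and invoke the cited corollary, spelling out only the one-line deduction that $|E| \le \binom{d+2}{2}$ together with the circuit having $\binom{d+2}{2}$ edges forces equality.
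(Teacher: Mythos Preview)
Your counting argument has a genuine gap. From ``rank $\le \min\{\binom{m}{2},\, dm - \binom{d+1}{2}\}$'' and ``$|E| > {}$rank'' you cannot conclude ``$|E| > \min\{\ldots\}$''; the two inequalities point the same way and do not chain. What you would need is that the rank of $E$ \emph{equals} that minimum, i.e.\ that $E$ is generically rigid on its own vertex set, and nothing in the hypotheses gives you that. (There is also an arithmetic slip: $\binom{d+2}{2} + \binom{d+1}{2} = (d+1)^2$, not $d(d+2)$; with the correct value the inequality only yields $m \le d+2$, not $m \le d+1$, so the subsequent ``unless'' patch does not rescue the logic either.)

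The paper's sketch supplies the missing idea by a different mechanism. It passes to an inclusion-minimal dependent subset and then shows directly that every vertex has degree at least $d+1$: if some vertex $v$ had degree $\le d$, the $\le d$ incident edge lengths could be varied independently by moving $\p_v$ alone, so those rows of the rigidity matrix are independent of the rest, and deleting them leaves the remainder still dependent---contradicting minimality. Once minimum degree $\ge d+1$ is in hand, the handshake bound gives $(d+1)m \le 2|E| \le (d+1)(d+2)$, so $m \le d+2$; and since the maximum possible degree is $m-1$, also $m \ge d+2$; hence $m = d+2$ and $E = K_{d+2}$. Your final paragraph correctly identifies the circuit reduction and the citation to \cite{GSS}, which is fine as a black box, but the self-contained argument hinges on this degree-removal step, not on a global rank count.
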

\begin{proof}[Proof Sketch]
Assume, without loss of generality, that $E$ is infinitesimally
dependent and inclusion-wise minimal with this property.  
If $E$ does not consist of the edges of a $K_{d+2}$ subgraph, 
then it has a vertex $v$ of degree at most $d$. Suppose that 
$\p$ is a configuration in affine general position.
A geometric argument then shows that 
the coordinate subspaces 
spanned by the edges incident to $v$ must be 
in  of $dm_E$ at $\p$.  Since  of 
$dm_E$ at $\p$ is not $|E|$-dimensional, removing 
the edges incident to $v$ yields a smaller set of 
edges $E'\subseteq E$ that is still infinitesimally dependent.
This  contradicts the assumed minimality of $E$.
\end{proof}

Combining Propositions~\ref{prop:infind} and~\ref{prop:simplex} we arrive at the following:
\begin{proposition}
\label{prop:simplex2}
Let $E$ be an edge measurement ensemble (with all its edges
distinct).
Suppose $|E| \le \binom{d+2}{2}$ and 
 of 
$m_E(\cdot)$ over all complex 
configurations of $n$ points has dimension less than $|E|$.  
Then $|E| = \binom{d+2}{2}$ and
$E$ consists of the edges of a  $K_{d+2}$ subgraph (in some order).
\end{proposition}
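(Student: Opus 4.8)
The plan is simply to chain Propositions~\ref{prop:infind} and~\ref{prop:simplex} together, so very little remains beyond bookkeeping. First I would note that the image of $m_E(\cdot)$ over all complex configurations is a constructible subset of $\CC^{|E|}$, so its dimension is a well-defined integer that is at most $|E|$; moreover it agrees with the dimension of its Zariski closure $M_{d,E}$. Consequently the hypothesis ``the image of $m_E(\cdot)$ has dimension $<|E|$'' is precisely the negation of ``the image has dimension $=|E|$''.

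Next I would invoke Proposition~\ref{prop:infind}, which states that the image of $m_E(\cdot)$ has dimension $|E|$ exactly when $E$ is infinitesimally independent in $d$ dimensions. Combined with the previous observation, this means that under our hypothesis $E$ is \emph{not} infinitesimally independent, i.e.\ $E$ is infinitesimally dependent in $d$ dimensions. Now Proposition~\ref{prop:simplex} applies verbatim: since $|E|\le\binom{d+2}{2}$ and $E$ is infinitesimally dependent in $d$ dimensions, that proposition forces $|E| = \binom{d+2}{2}$ and $E$ to consist of the edges of a $K_{d+2}$ subgraph (in some order), which is exactly the desired conclusion.

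There is no genuine obstacle here; all the substantive content has already been established in the two cited propositions — Sard's theorem identifying the generic rank of the differential with the dimension of the image (Proposition~\ref{prop:infind}), and the rigidity-theoretic low-degree-vertex peeling argument that produces the $K_{d+2}$ (Proposition~\ref{prop:simplex}). The only point I would spend a sentence on is confirming that the two occurrences of ``dimension of the image of $m_E(\cdot)$'' — the one in Proposition~\ref{prop:infind} and the one in the present statement — refer to the same quantity, so that the trichotomy collapses cleanly to the dichotomy ``$=|E|$ or $<|E|$'' and the two propositions dovetail without any gap.
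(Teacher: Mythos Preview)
Your proposal is correct and matches the paper's own argument exactly: the paper simply states that Proposition~\ref{prop:simplex2} follows by combining Propositions~\ref{prop:infind} and~\ref{prop:simplex}, which is precisely the chaining you describe. Your extra sentence confirming that the two notions of ``dimension of the image'' coincide is a reasonable bit of hygiene but not something the paper dwells on.
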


\subsection{Consistent with  $K_{d+2}$}

We can now complete the argument that when our data looks consistent
with a single $K_{d+2}$, then we can be certain that this must be
how this data arose. The key idea is that unless we were measuring 
 a $K_{d+2}$, then for a generic $\p$, and using Proposition~\ref{prop:simplex2},
 we should not expect to find a measurement that satisfies any extra algebraic
 condition defined using rational coefficients.

\begin{proposition}
\label{prop:ind}
Let the dimension be  $d\ge 1$. 
Let $\p$ be an 
$n$-point configuration in $\RR^d$
such that $m(\p)$ is generic in $M_{d,n}$.
Suppose there is a sequence of $D$, not-necessarily distinct,  edges $E = (E_1, \ldots, E_D)$,
such that $w_i:=\la E_i,\p \ra^2$ form a vector
$\w:=(w_1, \dots, w_D)$
that
is in $M_{d,d+2}$.
%with no two of the $w_i$ identical. 

Then
there must be a  subconfiguration  
$\p_T$ of $\p$ with $d+2$ points 
such that
$\w = m(\p_T)$.
%the squared edge measurement 
%action of the $E_i$ on $\p$ computes
%$\w = m(\p_T)$.
\end{proposition}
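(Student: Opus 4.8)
The plan is to apply the general principle of Theorem~\ref{thm:prin1} together with the dimension-counting results of the previous subsection. First I would set up the right linear map: let $E = \{E_1,\dots,E_D\}$ be the given sequence of $D$ edges, viewed (via the fixed indexing of coordinates) as a multiset of indices in $\{1,\dots,N\}$. If all $D$ edges were distinct, I would simply use the edge-forgetting map $\pi_{\bar E}\colon \CC^N \to \CC^D$ of Definition~\ref{def:maps}; since the $E_i$ need not be distinct, I instead take the linear map $\E\colon \CC^N \to \CC^D$ that sends the coordinate basis vector $e_j$ (for $j$ an edge of $K_n$) to $\sum_{i : E_i = j} e'_i$, i.e. it copies the value at edge $j$ into every output slot whose label is $j$. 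By construction, $\E(m(\p)) = (w_1,\dots,w_D) = \w$, and all data here is defined over $\QQ$. Since $m(\p)$ is assumed generic in $V := M_{d,n}$ (which is irreducible, of dimension $dn - C$, and defined over $\QQ$ by Theorem~\ref{thm:Mvariety}), and $\E$ maps this generic point into $W := M_{d,d+2}$, Theorem~\ref{thm:prin1} gives $\E(M_{d,n}) \subseteq M_{d,d+2}$.

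Next I would extract the combinatorial conclusion that $E$ must literally be a $K_{d+2}$. The variety $M_{d,d+2} \subseteq \CC^D$ has dimension $d(d+2) - C = D - 1 < D$ (using $D = \binom{d+2}{2} = \binom{d+1}{2} + (d+1) = C + (d+1)$ and $d(d+2) = C + \binom{d+2}{2} - 1$; the clean way is $\dim M_{d,d+2} = d(d+2) - \binom{d+1}{2} = \binom{d+2}{2} - 1 = D-1$). Hence $\E(M_{d,n})$, being contained in $M_{d,d+2}$, has dimension $< D$. Now I need to rule out the possibility that the $E_i$ are distinct and fail to form a $K_{d+2}$, and also rule out genuine repetitions. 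If the $E_i$ are pairwise distinct, then $\E = \pi_{\bar E}$, so the image of $m_E(\cdot) = \pi_{\bar E}\circ m(\cdot)$ has dimension $< D = |E|$, and Proposition~\ref{prop:simplex2} forces $E$ to be the edge set of a $K_{d+2}$ subgraph. If some edge $j$ is repeated among the $E_i$, then the image of $\E\circ m(\cdot)$ lies in the linear subspace where the corresponding output coordinates are forced equal; I would argue this case is either excluded by hypothesis (the statement is really about $D$ edges labeling the $D$ coordinates of a $K_{d+2}$, so I would either assume distinctness or handle repetition by noting that a repeated coordinate cannot help us land in $M_{d,d+2}$ generically) — more carefully, the underlying distinct-edge set $E'$ has $|E'| < D$, and its image under $m_{E'}$ must then also have deficient dimension, which by Proposition~\ref{prop:simplex} is impossible unless $|E'| = D$, a contradiction. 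So $E$ consists of the $D$ distinct edges of some $K_{d+2}$ on a vertex set $T = \{i_1,\dots,i_{d+2}\}$.

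Finally I would identify the values with the subconfiguration. Once $E$ is known to be exactly the edge set of the $K_{d+2}$ on $T$, the vector $\w = (w_1,\dots,w_D)$ is precisely $\pi_{\bar E}(m(\p))$, which by definition of $m$ equals $m(\p_T)$ — the squared-length vector of the $d+2$ points $\p_T$, read off in whatever order $E$ imposes. This is the desired conclusion: $\w = m(\p_T)$. The main obstacle I anticipate is the bookkeeping around non-distinct edges: Proposition~\ref{prop:simplex2} is phrased for ensembles with distinct edges, so the argument must either invoke a hypothesis (or convention) that the $D$ measured coordinates correspond to distinct edges, or carefully reduce the repeated case to the distinct-edge case by collapsing duplicate coordinates and re-applying the dimension count. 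I would also want to double-check the dimension identity $\dim M_{d,d+2} = D - 1$ against Theorem~\ref{thm:Mvariety} ($\dim M_{d,n} = dn - C$ with $n = d+2$), and confirm that genericity of $m(\p)$ in $M_{d,n}$ is preserved well enough to legitimately invoke Theorem~\ref{thm:prin1} (it is, since $m(\p)$ generic in the irreducible $\QQ$-variety $M_{d,n}$ is exactly the hypothesis of that principle).
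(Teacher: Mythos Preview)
Your overall architecture---apply Theorem~\ref{thm:prin1} to get $\E(M_{d,n})\subseteq M_{d,d+2}$, then use dimension counting via Proposition~\ref{prop:simplex2} to force $E$ to be a $K_{d+2}$---matches the paper. But there are two genuine gaps.

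\textbf{Repeated edges.} Your argument here is backwards. You write that if $|E'|<D$ then ``its image under $m_{E'}$ must then also have deficient dimension, which by Proposition~\ref{prop:simplex} is impossible unless $|E'|=D$.'' But you never justify the deficiency claim, and in fact the opposite holds: by the contrapositive of Proposition~\ref{prop:simplex}, any $E'$ with $|E'|<D$ is infinitesimally \emph{independent}, so $m_{E'}$ has full image dimension $r:=|E'|$. The containment $\E(M_{d,n})\subseteq M_{d,d+2}$ only tells you the image has dimension $\le D-1$, which is no constraint once $r\le D-1$. The paper's fix (Lemmas~\ref{lem:1} and~\ref{lem:2}) is different: since $E'$ is independent, the image of $\E$ is an $r$-dimensional constructible set inside the $r$-dimensional linear space $X_I=\{\x\in\CC^D: x_i=x_j \text{ whenever } E_i=E_j\}$, so its Zariski closure is all of $X_I$, forcing $X_I\subseteq M_{d,d+2}$. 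But every such $X_I$ contains the all-ones vector, and the equilateral $(d{+}1)$-simplex has nonzero volume, so the all-ones vector is \emph{not} in $M_{d,d+2}$---contradiction.

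\textbf{The ordering step.} Even after you know $E$ is the edge set of a $K_{d+2}$ on some vertex set $T$, you cannot conclude $\w=m(\p_T)$ just by saying ``read off in whatever order $E$ imposes.'' The map $m(\p_T)$ uses the fixed canonical edge ordering induced by an ordering of the \emph{vertices} of $T$; an arbitrary ordering of the $D$ edges need not arise this way (for $d=2$ there are $6!=720$ edge orderings but only $4!=24$ vertex orderings). The paper handles this by writing $\pi_{\bar E}=\P\,\pi_{\bar K}$ for an edge permutation $\P$, observing that $\P(M_{d,d+2})\subseteq M_{d,d+2}$ and hence (by bijectivity and irreducibility, as in Theorem~\ref{thm:prin2}) $\P$ is a linear automorphism of $M_{d,d+2}$, and then invoking Theorem~\ref{thm:bk-linear-s} (Boutin--Kemper) to conclude $\P$ is induced by a vertex relabeling. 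That last step is essential and is entirely missing from your proposal.
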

The condition that $m(\p)$ is generic in $M_{d,n}$ allows
for the possibility that $\p$ itself is non-generic. 
Here, we are essentially only
requiring that $\p$ is congruent to a generic configuration.
Since the proof requires several lemmas, we defer it for now.

\begin{remark}
\label{rem:consistPSD}
This proposition only requires that $\w \in M_{d,d+2}$ which 
is a zero-determinant test; no PSD test is required.
The conclusion tells us that these squared lengths come from a real
configuration, and so must automatically satisfy the relevant
PSD conditions.
\end{remark}

In the statement, we do not assume, a priori, that the $D$ edges in $E$ are distinct.
So we will prove first that under our genericity
assumption on $\p$, this will be automatically 
guaranteed.

\begin{lemma}\label{lem:1}
Let $d\ge 1$.  Let $I = \{I_1, \ldots, I_{r}\}$, 
for $r <  D$,
be a partition of $\{1, \ldots, D\}$ into  
$r$ subsets.  Let $X_I\subseteq \CC^D$ be the linear span of the 
vectors $\v_j = \sum_{i\in I_j} \e_i$, for $1\le j\le r$ and 
elementary vectors $\e_i$; i.e., these 
are vectors with only $r$ distinct values.

Then $M_{d,d+2}$ does not contain the space 
$X_I$.
\end{lemma}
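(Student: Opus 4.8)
The plan is to prove the contrapositive via a dimension count. Observe that $M_{d,d+2}$ has dimension $d(d+2) - C = d(d+2) - \binom{d+1}{2}$, which equals $D - 1 = \binom{d+2}{2} - 1$ (a short arithmetic check: $d(d+2) - \binom{d+1}{2} = \frac{2d^2+4d - d^2 - d}{2} = \frac{d^2+3d}{2} = \binom{d+2}{2}-1$). On the other hand, the subspace $X_I$ has dimension exactly $r$, with $r < D$. So both $X_I$ and $M_{d,d+2}$ are subsets of $\CC^D$ of dimension at most $D-1$, which is not yet enough to conclude containment fails — two varieties of the same dimension can of course have one contained in the other only if they are equal, but $X_I$ is a linear space while $M_{d,d+2}$ is not. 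So I need to identify some structural obstruction.

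The cleaner approach: suppose for contradiction that $X_I \subseteq M_{d,d+2}$. Since $X_I$ is a linear subspace containing the origin, and $M_{d,d+2}$ is irreducible (Theorem \ref{thm:Mvariety}) of dimension $D-1$, and $\dim X_I = r$. If $r = D-1$ then $X_I = M_{d,d+2}$ as varieties (equal dimension, one irreducible containing the other forces equality), but $M_{d,d+2}$ is not a linear space — e.g. it is cut out by the nonvanishing-to-vanishing of Cayley–Menger-type determinants and genuinely curved; concretely, its defining ideal contains irreducible polynomials of degree $> 1$ (the rank-$d$ condition on the $(n-1)\times(n-1)$ Gram matrix from Theorem \ref{thm:Mvariety}, with $n-1 = d+1$, is the vanishing of the determinant, a degree-$(d+1)$ polynomial which is irreducible). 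A linear space cannot satisfy such an equation identically unless the polynomial vanishes on it, but a nonzero polynomial of degree $\geq 1$ restricted to $\CC^D$ cannot vanish on all of a $(D-1)$-dimensional coordinate-type subspace unless it is divisible by the linear form cutting out that hyperplane — and the relevant determinant is irreducible of degree $d+1 \geq 2$, contradiction. If instead $r < D-1$, then $X_I$ is a proper subvariety of $M_{d,d+2}$; this case I would handle by the same divisibility argument applied to any defining polynomial of $M_{d,d+2}$ that does not already vanish on $X_I$ (such a polynomial exists since $X_I \neq M_{d,d+2}$), deriving that this polynomial, restricted to the linear space $X_I \cong \CC^r$, is identically zero, hence the polynomial lies in the ideal of $X_I$; running this over a generating set of $I(M_{d,d+2})$ forces $I(M_{d,d+2}) \subseteq I(X_I)$, i.e. $M_{d,d+2} \subseteq X_I$, which combined with the reverse inclusion gives equality and the degree contradiction above.

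Alternatively — and this is likely the slicker route the authors intend — argue geometrically: a vector $\w \in X_I$ has at most $r < D$ distinct coordinate values, meaning the corresponding "squared edge lengths" on $K_{d+2}$ take only $r$ distinct values; but for a \emph{generic} configuration $\p_T$ of $d+2$ points, the $D$ squared pairwise distances $m(\p_T)$ are all distinct (indeed rationally independent, by a genericity argument as in Lemma \ref{lem:genPush}), so a generic point of $M_{d,d+2}$ has all coordinates distinct and thus lies outside $X_I$; since generic points are dense in the irreducible variety $M_{d,d+2}$, we cannot have $M_{d,d+2} \subseteq X_I$... but wait, this shows $X_I \not\supseteq M_{d,d+2}$, whereas we want $X_I \not\subseteq M_{d,d+2}$, so this needs the dimension count: since $\dim X_I = r$ and we'd need $X_I \subseteq M_{d,d+2}$ with $M_{d,d+2}$ irreducible of dimension $D-1 \geq r$, containment is possible dimensionally, so I still must rule it out — and for that I return to: pick a point of $X_I$ (e.g. a generic one, with exactly $r$ distinct nonzero values among its $D$ coordinates) and show it is not in $M_{d,d+2}$ by exhibiting a violated Cayley–Menger / rank condition. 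The \textbf{main obstacle} is exactly this last step: showing that a generic vector with only $r < D$ distinct coordinate values fails to be realizable as the squared distances of $d+2$ points in $\CC^d$. I expect this follows from a rigidity-flavored argument — if such a vector were realizable for an open set of the parameters cutting out $X_I$, the corresponding edge set would be infinitesimally dependent in a way forbidden by Proposition \ref{prop:simplex}, or one directly checks that the rank-$d$ Gram condition cannot hold identically on the $r$-parameter family $X_I$ — and pinning down the cleanest version of that contradiction is where the real work lies.
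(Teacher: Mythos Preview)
Your proposal does not complete the proof and contains a genuine logical error. In your case $r < D-1$, you assume $X_I \subseteq M_{d,d+2}$ and then look for ``a defining polynomial of $M_{d,d+2}$ that does not already vanish on $X_I$''. No such polynomial exists: by assumption every element of $I(M_{d,d+2})$ vanishes on $X_I$. Your parenthetical justification ``such a polynomial exists since $X_I \neq M_{d,d+2}$'' has the direction backwards --- the strict inclusion $X_I \subsetneq M_{d,d+2}$ gives polynomials in $I(X_I) \setminus I(M_{d,d+2})$, not the other way around. You also write ``$I(M_{d,d+2}) \subseteq I(X_I)$, i.e.\ $M_{d,d+2} \subseteq X_I$'', but the ideal--variety correspondence is inclusion-reversing, so this containment of ideals is equivalent to $X_I \subseteq M_{d,d+2}$, which is what you assumed. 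The argument is circular. Your second approach is honest about being incomplete: you correctly identify that you need a single point of $X_I$ not in $M_{d,d+2}$, but you never find one.

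The paper's proof is a one-liner that produces exactly such a point. Since $I_1,\ldots,I_r$ partition $\{1,\ldots,D\}$, summing the spanning vectors gives $\sum_{j=1}^r \v_j = \sum_{j=1}^r \sum_{i\in I_j} \e_i = \sum_{i=1}^D \e_i$, the all-ones vector. So the all-ones vector lies in every $X_I$. On the other hand, the all-ones vector is \emph{not} in $M_{d,d+2}$: it would correspond to a $(d+2)$-point configuration in $\CC^d$ with all squared pairwise distances equal to $1$, but the regular $(d+1)$-simplex has nonzero $(d+1)$-volume, so its Cayley--Menger determinant is nonzero. This single witness point finishes the proof with no case analysis, no irreducibility, no dimension counting. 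You were circling the right target (``pick a point of $X_I$ \ldots\ and show it is not in $M_{d,d+2}$'') but missed that the all-ones vector is the obvious universal choice.
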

\begin{proof}
Observe that, for any partition $I$ of the coordinates, the 
all ones vector is a linear combination of the vectors $\v_j$.  
Hence any $X_I$ contains the all ones vector.

On the other hand, it is well-known that the $(d+2)$-simplex with 
all edge lengths equal to one has non-zero $(d+1)$-volume.  Hence 
the all ones vector is not in $M_{d,d+2}$.
\end{proof}

\begin{lemma}\label{lem:2}
Let the dimension be $d\ge 1$. 
Let $\p$ be a $n$-point configuration in $\RR^d$
such that $m(\p)$ is generic in $M_{d,n}$.
Suppose there is a sequence of $D$, not-necessarily distinct,  edges $E = (E_1, \ldots, E_D)$,
such that $w_i:=\la E_i,\p \ra^2$ form a vector
$\w:=(w_1, \dots, w_D)$
that
is in $M_{d,d+2}$.

Then the $D$ edges must be distinct.
\end{lemma}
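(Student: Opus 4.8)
The plan is to prove Lemma~\ref{lem:2} by contradiction, leveraging Lemma~\ref{lem:1} together with Theorem~\ref{thm:prin1} (the ``small image'' principle). Suppose the $D$ edges in $E$ are \emph{not} distinct. Then the measurement map $m_E(\cdot)$ factors through the linear projection that forgets edges of $K_n$ not used, followed by a ``diagonal'' duplication of repeated coordinates. Concretely, if $E$ uses only $r < D$ distinct edges $F = \{F_1,\dots,F_r\}$, then there is a partition $I = \{I_1,\dots,I_r\}$ of $\{1,\dots,D\}$ (grouping the indices of $E$ by which edge of $F$ they equal) and a linear map $\Phi\colon \CC^r \to \CC^D$, sending $\e_j \mapsto \v_j := \sum_{i\in I_j}\e_i$, such that $\w = \Phi(m_F(\p))$. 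So $\w$ lies in the image $\Phi(M_{d,F})$, which is contained in the linear subspace $X_I$ of Lemma~\ref{lem:1}.

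Next I would invoke the genericity hypothesis. Since $m(\p)$ is generic in $M_{d,n}$ (Theorem~\ref{thm:Mvariety} tells us $M_{d,n}$ is irreducible and defined over $\QQ$), and $m_F = \pi_{\bar F}\circ m$ with $\pi_{\bar F}$ defined over $\QQ$, the point $m_F(\p)$ is a generic point of $M_{d,F}$ (the image of a generic point under a $\QQ$-morphism is generic in the image). Now apply Theorem~\ref{thm:prin1} with $V := M_{d,F}$, $\genericpoint := m_F(\p)$, the linear map $\Phi$, and $W := M_{d,d+2}$: we are told $\Phi(\genericpoint) = \w \in M_{d,d+2}$, so the principle yields $\Phi(M_{d,F}) \subseteq M_{d,d+2}$. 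But $M_{d,F} = M_{d,n}$ restricted to $r < D$ coordinates; crucially, even in the worst case where $M_{d,F}$ is \emph{all} of $\CC^r$ (which happens precisely when the $r$ edges of $F$ are infinitesimally independent, by Proposition~\ref{prop:infind}), we still get that $\Phi(\CC^r) = X_I \subseteq M_{d,d+2}$. If $M_{d,F}$ is a proper subvariety, then $\Phi(M_{d,F})$ need not be all of $X_I$, so I need to handle this: but $\Phi$ is injective (it duplicates coordinates, no cancellation), so $\dim \Phi(M_{d,F}) = \dim M_{d,F}$, and the all-ones vector argument of Lemma~\ref{lem:1} still needs to be shown to apply.

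This last point is the main obstacle, and the cleanest fix is to strengthen slightly or to observe that the all-ones vector always lies in $\Phi(M_{d,F})$, not merely in $X_I$. Indeed $M_{d,F}$ always contains the image of a degenerate configuration where all $n$ points coincide at distinct... no: rather, $M_{d,F}$ always contains $m_F(\p')$ for \emph{any} configuration $\p'$, and in particular it contains points whose $F$-coordinates are all equal to $1$ — because one can realize $r$ prescribed squared edge lengths all equal to $1$ by placing the relevant vertices at the corners of a (sub-)simplex when $r \le \binom{d+2}{2}$, and more generally a regular-simplex-type configuration realizes all edge lengths equal; this shows $(1,\dots,1)\in M_{d,F}$, hence $\Phi(1,\dots,1) = (1,\dots,1) \in \Phi(M_{d,F}) \subseteq M_{d,d+2}$, contradicting Lemma~\ref{lem:1}'s observation that the all-ones vector is not in $M_{d,d+2}$ (since a $(d+2)$-simplex with unit edges has nonzero volume). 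Alternatively, and more robustly, I would just cite Lemma~\ref{lem:1} directly with the subspace $X_I$ and argue that $\Phi(M_{d,F})$, being a $\QQ$-constructible set containing a generic point whose image lands in $M_{d,d+2}$, has Zariski closure inside $M_{d,d+2}$; then since $M_{d,F} \ni (1,\dots,1)$ always (regular simplex realization), we get the all-ones contradiction. I would write up the regular-simplex realization as a one-line sub-claim and finish. The contrapositive then gives that the $D$ edges are distinct, completing Lemma~\ref{lem:2}.
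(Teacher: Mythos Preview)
Your overall architecture matches the paper's: build the linear map, apply Theorem~\ref{thm:prin1} to force the image into $M_{d,d+2}$, then contradict Lemma~\ref{lem:1}. The gap is in your treatment of the case ``$M_{d,F}$ a proper subvariety of $\CC^r$''. Your proposed fix---that $(1,\dots,1)\in M_{d,F}$ via a regular-simplex placement---does not work as stated: the $r<D$ distinct edges of $F$ may span far more than $d+1$ vertices (think of a long path or a star), so there is no regular simplex in $\RR^d$ on which to seat them all; and even over $\CC^d$, at most $d+1$ points can have all pairwise squared distances equal to~$1$ (the relevant Gram matrix $\tfrac12 I+\tfrac12 J$ has full rank $n-1$). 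So your one-line sub-claim is unproved, and the ``alternative'' you sketch still relies on the same unproved assertion.

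The clean fix---and this is exactly what the paper does---is to observe that the proper-subvariety case is \emph{vacuous}. By Proposition~\ref{prop:simplex}, any set of $r<D$ distinct edges is automatically infinitesimally independent in $d$ dimensions; then Proposition~\ref{prop:infind} gives $\dim m_F(\CC^{dn})=r$, hence its Zariski closure $M_{d,F}$ equals all of $\CC^r$, hence $\Phi(M_{d,F})=X_I$, and Lemma~\ref{lem:1} finishes. You already cite Proposition~\ref{prop:infind} for what you call the ``worst case''; the missing observation is that Proposition~\ref{prop:simplex} makes it the \emph{only} case. Once you add that line, your argument and the paper's coincide.
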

\begin{proof}
Let $\E$ be the $D$ by $N$ matrix with $ij$-th entry
set to $1$ if $E_i$ is the $j$th edge of $K_n$, and $0$
otherwise. 
We identify $\E$ with the linear map it induces from $\CC^N$ to $\CC^D$.
%We can then restrict its action on $M_{d,n}$.
Then $\E(m(\p))$ gives us 
the $D$ measurements $w_i=\la E_i,\p \ra^2$.

From Theorem~\ref{thm:prin1}, since $m(\p)$ is generic in $M_{d,n}$, 
we see that 
$\E(M_{d,n}) \subseteq M_{d,d+2}$.

Suppose that the $D$ edges comprising
$E$ are not all distinct.  
Then $E'$, the collection of $r<D$
distinct edges, is infinitesimally independent (Proposition~\ref{prop:simplex}).
From Proposition~\ref{prop:infind}, 
 $\E(M_{d,n})$
must be an $r$-dimensional constructible subset  $S$ of
one of the $r$-dimensional  linear (and irreducible)
spaces $X_I$ described in Lemma \ref{lem:1}.
Its Zariski closure, $\bar{S}$, would then be equal to  this
$X_I$.  Since $M_{d,d+2}$ contains $S$ and is Zariski 
closed, $M_{d,d+2}$ must contain $X_I$.
% This closure, $\bar{S}$, must be contained
% in any variety, such as $M_{d,d+2}$, that contains $S$.

But from Lemma~\ref{lem:1}, $X_I$ is not contained 
in $M_{d,d+2}$. This contradiction establishes the lemma.

\end{proof}

\begin{proof}[Proof of Proposition~\ref{prop:ind}]

From Lemma~\ref{lem:2} we know that the edges in $E$ are all distinct.
Our measurement sequence $\w$ arises from  $D$ distinct coordinates of $m(\p)$,
giving us
$\w = 
\pi_{{E}}(m(\p))$.  Since $\p$ is generic, $m(\p)$ is generic in 
$M_{d,n}$.
Recall also that, from Theorem~\ref{thm:Mvariety},
$M_{d,n}$ is irreducible.

From Theorem~\ref{thm:prin1}, 
we see that $\pi_{{E}}(M_{d,n}) \subseteq M_{d,{d+2}}$, since
$\pi_{{E}}(m(\p))\in M_{d,{d+2}}$.

Since  dimension 
of $\pi_{{E}}(M_{d,n})$ is less than 
$D$, 
from Proposition~\ref{prop:simplex2} (which required us to know that the edges
were distinct), we see 
that $E$ must consist of the edges of a $K_{d+2}$.

Let $\pi_{{K}}$ be the edge  map 
where $K$ comprises the edges 
of this $K_{d+2}$, and where $K$ is ordered 
such that 
$\pi_{{K}}(M_{d,n}) = M_{d,d+2}$. 
Let $\K$ be the matrix representing $\pi_{K}$,
and let  $\E$ be the matrix representing $\pi_{E}$.

Then we must have 
%$\pi_{{E}}= \P \pi_{{K}}$
$\E =
\P \K$
where $\P$ is some $D\times D$  permutation matrix.
We identify $\P$ with the linear map that it induces on $\CC^D$.
And we have 
$\P(M_{d,d+2}) = 
\P(\pi_{{K}}(M_{d,n})) = 
\pi_{{E}}(M_{d,n}) 
\subseteq  M_{d,d+2}$.

Thus, from Theorem~\ref{thm:prin2}, $\P$
must induce a linear  automorphism on $M_{d,d+2}$.
Then from Theorem~\ref{thm:bk-linear-s}, 
$\P$ must be induced from a vertex relabeling. 
As  
%$\pi_{{E}}=\P \pi_{{K}}$ 
$\E=\P \K$ 
for such 
a $\P$, 
there must be an ordered $(d+2)$-point
subconfiguration $\p_T$ of $\p$ such that 
$\w = m(\p_T)$.
\end{proof}

\begin{remark}
Because of the way Proposition~\ref{prop:simplex2} is 
used in the above proof, $K_{d+2}$ cannot simply be
replaced by some other generically globally rigid 
graph to obtain a similar result.
\end{remark}

\subsection{Trilateration}
\label{sec:tri1}

Now we wish to extend this result to the case where our edge 
measurement ensemble 
is not complete, but does allow for trilateration in the sense of
Definition~\ref{def:ensemble}. 
Due to the gluing ambiguity we saw in Figure~\ref{fig:2flips}, 
we will restrict our discussion here to $d\ge 2$.
The key idea is to use Proposition~\ref{prop:ind}
iteratively, applied to $K_{d+2}$
subsets of $K_n$. This will lead to the following
global rigidity theorem. 
\begin{mdframed}
\begin{theorem}
\label{thm:triBK2}
Let the dimension be  $d\ge 2$. 
Let $\p$ be a generic configuration of $n\ge d+2$ points. Let
$\v= \la G, \p \ra^2$ where 
$G$
is an edge measurement ensemble
that allows for 
trilateration. 

Suppose there is a 
configuration $\q$, 
also of $n$ points,
along with 
an edge
measurement ensemble $H$,
such that 
$\v=\la H,\q \ra^2$.

Then 
there is a vertex relabeling  of $\q$ such that,
up to congruence,
$\q=\p$.
Moreover, under this vertex relabeling,
$G=H$.
\end{theorem}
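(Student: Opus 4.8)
The plan is to turn the problem into iterated applications of Proposition~\ref{prop:ind}, applied not to $\p$ but to $\q$; the preliminary — and I expect hardest — step is therefore to show that $\q$, although not assumed generic, satisfies the hypothesis ``$m(\q)$ generic in $M_{d,n}$'' needed to feed $\q$ into that proposition. For this, write $\v=\pi_{\bar G}(m(\p))=\pi_{\bar H}(m(\q))$ and set $M_{d,G}:=\overline{\pi_{\bar G}(M_{d,n})}$ and $M_{d,H}:=\overline{\pi_{\bar H}(M_{d,n})}$, which are irreducible and defined over $\QQ$ (images of the irreducible $M_{d,n}$ under linear maps). Since $G$ allows for trilateration, its underlying graph is generically infinitesimally rigid in $\RR^d$ (a $K_{d+2}$, then each later vertex joined to generic points by $d+1\ge d$ edges), so $\dim M_{d,G}=dn-C=\dim M_{d,n}$ by Proposition~\ref{prop:infind}, while always $\dim M_{d,H}\le\dim M_{d,n}$. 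Because $\v\in M_{d,H}$ with $m(\p)$ generic, Theorem~\ref{thm:prin1} gives $M_{d,G}\subseteq M_{d,H}$; by the dimension count and irreducibility these are equal, and $\pi_{\bar H}\colon M_{d,n}\to M_{d,H}$ is dominant with finite generic fibres. Now $\v$ is generic in $M_{d,G}=M_{d,H}$ (push the generic point $m(\p)$ along $\pi_{\bar G}$; cf.\ Lemma~\ref{lem:genPush}) and $m(\q)$ lies in the fibre over it, so $m(\q)$ is generic in $M_{d,n}$ by pull-back of generic points along a generically finite dominant map (cf.\ Lemma~\ref{lem:genPull}).

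Granting this, I would run the trilateration of $G$. For the base: reorder the vertices of $\p$ so that $G$ contains its base $K_{d+2}$ on $\{1,\dots,d+2\}$, occupying some positions $J_0$ in $\v$; then $\v_{J_0}=m(\p_{\{1,\dots,d+2\}})\in M_{d,d+2}$, and via $H$ it equals $\langle H_{J_0},\q\rangle^2$ for a length-$D$ edge subsequence $H_{J_0}$. Proposition~\ref{prop:ind} applied to $\q$ says $H_{J_0}$ is the edge set of a $K_{d+2}$ on some $(d+2)$-set $T_0$ of $\q$'s vertices and $\v_{J_0}=m(\q_{T_0})$, so $\q_{T_0}$ is congruent to $\p_{\{1,\dots,d+2\}}$ by Lemma~\ref{lem:mds}. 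A generic $(d+2)$-simplex has distinct edge lengths and no non-trivial vertex-permuting congruence, so this identification is unique: relabelling $\q$ I may assume $T_0=\{1,\dots,d+2\}$, with $G$ and $H$ agreeing on $J_0$ and a unique congruence $\sigma$ with $\sigma(\p_i)=\q_i$ for $i\le d+2$.

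I would then add one vertex at a time in $G$'s trilateration order, carrying the inductive hypothesis that $\q$ has been relabelled so that $\sigma(\p_i)=\q_i$ for $i<k$ and $G,H$ agree on all trilateration edges used so far. For vertex $k$, with $G$'s trilateration edges $[i_1,k],\dots,[i_{d+1},k]$ (all $i_l<k$), build the $D$-vector $\w$ whose first $\binom{d+1}{2}$ entries are the already-known squared distances among $\p_{i_1},\dots,\p_{i_{d+1}}$ and whose remaining $d+1$ entries are the values of $\v$ at the $k$-trilateration positions; then $\w=m(\p_{\{i_1,\dots,i_{d+1},k\}})\in M_{d,d+2}$, and since $\sigma$ matches the $\p_{i_l}$ with the $\q_{i_l}$ this same $\w$ equals $\langle E',\q\rangle^2$ where $E'$ is the $K_{d+1}$ on $\{i_1,\dots,i_{d+1}\}$ followed by the $d+1$ $H$-edges at the $k$-positions. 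Proposition~\ref{prop:ind} applied to $\q$ forces $E'$ to be a $K_{d+2}$, so $E'=\{i_1,\dots,i_{d+1},b\}$ with a single extra vertex $b$ and $\w=m(\q_{E'})$. Here $d\ge 2$ enters: the $(d+1)$-simplex on $\p_{i_1},\dots,\p_{i_{d+1}}$ has $\ge 3$ vertices, so genericity gives it no non-trivial vertex symmetry and a point of $\RR^d$ is determined by its distances to these $d+1$ affinely spanning points; combined with the fact that a generic $\p$ admits no ``ghost'' point realizing a non-trivial permutation of the distances $|\p_{i_l}-\p_k|$, this forces the $H$-edges at the $k$-positions to be $[i_1,b],\dots,[i_{d+1},b]$ \emph{in this order} (matching $G$) and forces $\q_b=\sigma(\p_k)$. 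Genericity of $\p$ gives $\p_k\ne\p_i$ for $i<k$, hence $b\notin\{1,\dots,k-1\}$, and relabelling $\q$'s vertex $b$ to $k$ closes the induction. After the last step $\q=\sigma(\p)$ and $G,H$ agree on every trilateration edge; for any remaining $G$-edge $e=\{a,a'\}$ at a position $p$, the $H$-edge $e'=\{c,c'\}$ there satisfies $|\p_a-\p_{a'}|^2=\v_p=|\q_c-\q_{c'}|^2=|\p_c-\p_{c'}|^2$, and since all pairwise squared distances of the generic $\p$ are distinct, $e'=e$; hence $G=H$.

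I expect the genuine content to sit in the first step (bootstrapping ``$m(\q)$ generic'' from the bare assumption that $\q$ produces the trilaterizable data $\v$, using the rigidity of $G$ and the behaviour of generic points under projection and pull-back) and in the order-matching inside each trilateration step — this is exactly the phenomenon behind Figure~\ref{fig:2flips}, and is where $d\ge 2$ is indispensable. The remaining ingredients (Propositions~\ref{prop:ind} and~\ref{prop:simplex2}, Lemma~\ref{lem:mds}) are used as black boxes, and the rest is bookkeeping of edge orderings under the genericity (distinct edge lengths, no simplex symmetry) that was arranged in the first two steps.
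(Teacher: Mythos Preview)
Your proposal is correct and follows essentially the same route as the paper. The paper packages the two steps into Lemma~\ref{lem:triBK2} (establishing that $m(\q)$ is generic in $M_{d,n}$ from the infinitesimal rigidity of $G$) and Lemma~\ref{lem:triBK} (running the trilateration, applying Proposition~\ref{prop:ind} to the generic side at each step), then proves Theorem~\ref{thm:triBK2} by invoking these with the roles of $\p,\q$ and $G,H$ swapped; you have reproduced both lemmas inline, with only cosmetic differences (you obtain $M_{d,G}\subseteq M_{d,H}$ via Theorem~\ref{thm:prin1} rather than the paper's intersection argument, and you phrase the order-matching via ``no ghost point'' where the paper invokes Lemma~\ref{lem:noSim}).
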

\end{mdframed}

We note that this result has recently been
greatly strengthened to apply
to a much larger class of graphs than just
the complete graphs or  trilateration graphs~\cite{gugr}.
The proof we give here, for trilateration graphs, is 
much more direct, as it is based on greedily constructing the relabeling 
of $\q$, one point at a time.

The following variant   describes a certificate for correct
reconstruction.

\begin{mdframed}
\begin{theorem}
\label{thm:triBK2ALG}
Let the dimension be  $d\ge 2$. 
Let $\p$ be a generic configuration of $n \ge d+2$ points. Let
$\v= \la G, \p \ra^2$ where 
$G$
is an edge measurement ensemble.

Suppose there is a 
configuration $\q$, 
of $n'$ points,
along with 
an edge
measurement ensemble $H$ that allows for trilateration
such that 
$\v^-=\la H,\q \ra^2$.
We also suppose that no two points in $\q$
are coincident.

Suppose that $\v^-$ is contained in $\v$ as a subsequence of values.
Let $G^-$ be the subsequence of edges in $G$ corresponding to
$\v^-$. Let $\p_S$ be the subconfiguration
of $\p$ indexed by the vertices
that are endpoints of edges in the support of $G^-$.

Then 
there is a vertex relabeling  of $\p_S$ such that,
up to congruence,
$\q=\p_S$.
Moreover, under this vertex relabeling,
$G^-=H$.
\end{theorem}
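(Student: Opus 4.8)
The plan is to reconstruct $\q$ step by step inside $\p$, following the trilateration structure of $\pmb\beta=H$. The inclusion of $\v^-$ in $\v$ gives a length-preserving correspondence between the entries of $H$ and those of the subsequence $\pmb\alpha^-=G^-$ of $\pmb\alpha=G$, so I can think of $H$ and $G^-$ as two ordered graphs with matched edges. Since $H$ allows for trilateration, after reordering the vertices of $\q$ it contains a base $K_{d+2}$ on $\{1,\dots,d+2\}$ and, for each $j$ with $d+2<j\le n'$, a trilateration sequence $[i^j_1,j],\dots,[i^j_{d+1},j]$ with all $i^j_l<j$. I would prove by induction on $k$, for $d+2\le k\le n'$, the statement $P(k)$: there is an injection $\phi_k\colon\{1,\dots,k\}\to\{1,\dots,n\}$ and a congruence $g_k$ of $\RR^d$ with $g_k(\q_v)=\p_{\phi_k(v)}$ for $v\le k$, and $\phi_k$ carries the base edges and the trilateration-sequence edges lying in $\{1,\dots,k\}$ to the $G^-$-edges matched with them. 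Then $P(n')$ gives the theorem: it forces $n'\le n$ (as $\phi_{n'}$ is injective), $S:=\mathrm{im}\,\phi_{n'}$ equals the support of $G^-$, the congruence $g_{n'}$ realizes $\q$ as a relabeling of $\p_S$, and any remaining ``extra'' edge $\{u,v\}$ of $H$ is handled directly: its matched $G^-$-edge $e$ satisfies $\la e,\p\ra^2=|\q_u-\q_v|^2=|\p_{\phi_{n'}(u)}-\p_{\phi_{n'}(v)}|^2$, and genericity of $\p$ (no two point-pairs of $\p$ have equal distance) forces $e=\{\phi_{n'}(u),\phi_{n'}(v)\}$.

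For the base case I would pull from $\v^-$ the $D$ values of the base $K_{d+2}$ --- as a tuple these are $m(\q_{\{1,\dots,d+2\}})$ in some order --- and let $E$ be the matched length-$D$ sequence of edges of $K_n$, so $\la E,\p\ra^2$ is that same tuple. After permuting $E$ so the tuple is $m(\q_{\{1,\dots,d+2\}})$ in standard edge order, it lies in $M_{d,d+2}$; since $\p$ is generic, $m(\p)$ is generic in $M_{d,n}$ (Theorem~\ref{thm:Mvariety}), so Proposition~\ref{prop:ind} applies and shows $E$ is the edge set of a $K_{d+2}$ on a $(d+2)$-set $T$ with $\la E,\p\ra^2=m(\p_T)$ for a suitable ordering of $T$. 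Thus $m(\p_T)=m(\q_{\{1,\dots,d+2\}})$ as standard-order tuples, i.e.\ these configurations have equal \emph{labeled} squared distances, so Lemma~\ref{lem:mds} yields $g_{d+2}$ with $g_{d+2}(\q_v)=\p_{T_v}$; set $\phi_{d+2}(v):=T_v$, and the edge clause reads off from the ordering of $T$. (Alternatively Theorem~\ref{thm:bkMain} applies here.)

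For the inductive step, assume $P(k)$, let $j=k+1$ with trilateration sequence $[i_1,j],\dots,[i_{d+1},j]$ and matched $G^-$-edges $e_1,\dots,e_{d+1}$ (so $\la e_l,\p\ra^2=|\q_{i_l}-\q_j|^2$), and set $I:=\{\phi_k(i_1),\dots,\phi_k(i_{d+1})\}$ --- by $P(k)$ these are $d+1$ distinct, hence affinely independent, points of the generic $\p$, whose pairwise squared distances equal via $g_k$ those of $\q_{i_1},\dots,\q_{i_{d+1}}$. I build the length-$D$ edge sequence $E'$ of $K_n$ whose first $\binom{d+1}{2}$ entries are the edges inside $I$, arranged so that $\la E',\p\ra^2$ on those slots is the standard-order squared-distance tuple of $(\q_{i_1},\dots,\q_{i_{d+1}})$, followed by $e_1,\dots,e_{d+1}$. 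Then $\la E',\p\ra^2=m(\q_{i_1},\dots,\q_{i_{d+1}},\q_j)$ in standard edge order, a point of $M_{d,d+2}$, so Proposition~\ref{prop:ind} produces a $(d+2)$-set $T'$ with $E'$ the edges of $K_{T'}$ and $\la E',\p\ra^2=m(\p_{T'})$. Comparing the first $\binom{d+1}{2}$ entries and using genericity of $\p$, for every pair $\{a,b\}\subseteq\{1,\dots,d+1\}$ the corresponding entry of $E'$ is literally both $\{\phi_k(i_a),\phi_k(i_b)\}$ and $\{(T')_a,(T')_b\}$; since $d\ge2$ makes $d+1\ge3$, intersecting these pairs forces $(T')_l=\phi_k(i_l)$ for all $l\le d+1$, so $T'=I\cup\{t^*\}$ with $t^*:=(T')_{d+2}$ and $e_l=\{(T')_l,(T')_{d+2}\}=\{\phi_k(i_l),t^*\}$. (For $d=1$ only one such pair exists and this collapses to the $2$-flip ambiguity of Figure~\ref{fig:2flips}.)

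The last and most delicate part is gluing $T'$ onto what has already been built. From $m(\p_{T'})=m(\q_{i_1},\dots,\q_{i_{d+1}},\q_j)$ (equal \emph{labeled} distances) and Lemma~\ref{lem:mds} there is a congruence $h$ with $h(\q_{i_l})=\p_{\phi_k(i_l)}$ and $h(\q_j)=\p_{t^*}$; as $h$ agrees with $g_k$ on the affinely spanning set $\{\q_{i_l}\}$, in fact $h=g_k$. If $t^*=\phi_k(v_0)$ for some $v_0\le k$ then $g_k(\q_j)=h(\q_j)=\p_{t^*}=g_k(\q_{v_0})$, forcing $\q_j=\q_{v_0}$ and contradicting the no-coincident-points hypothesis; hence $t^*\notin\mathrm{im}\,\phi_k$, so $\phi_{k+1}:=\phi_k\cup\{j\mapsto t^*\}$ is injective and $g_{k+1}:=g_k$ gives $P(k+1)$ --- its edge clause holds because $\phi_{k+1}([i_l,j])=\{\phi_k(i_l),t^*\}=e_l$. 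I expect the two genuinely substantive points to be (i) this gluing, where $d\ge2$ and the no-coincident hypothesis are both essential, and (ii) setting up Proposition~\ref{prop:ind} correctly, for which the only care needed is permuting each extracted block into standard edge order; everything else is bookkeeping.
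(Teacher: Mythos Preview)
Your proof is correct and follows the same overall inductive scheme as the paper: follow the trilateration of $H$, apply Proposition~\ref{prop:ind} at each step to produce a $K_{d+2}$ inside $\p$, and glue. The one genuine difference is in the gluing mechanism. The paper packages the inductive step as Lemma~\ref{lem:partial-trilat} and, to identify the ``old'' $d+1$ vertices of the new $K_{d+2}$ with the already-visited ones, invokes Lemma~\ref{lem:noSim} (no two subconfigurations of $\ge 3$ points of a generic $\p$ are congruent). You instead exploit that you \emph{built} the first $C$ entries of $E'$ as literal edges on $I=\{\phi_k(i_l)\}$, then use genericity (no two edges of $\p$ have equal length) plus $d+1\ge 3$ to force $(T')_l=\phi_k(i_l)$ vertex by vertex. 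Both arguments are valid; yours is more concrete and avoids introducing Lemma~\ref{lem:noSim}, while the paper's formulation via congruent subconfigurations is what it later reuses (upgraded to similarities) in the path/loop setting of Section~\ref{sec:ugr}. One small remark: the clause ``$E'$ the edges of $K_{T'}$'' is not part of the \emph{statement} of Proposition~\ref{prop:ind}; it is established inside its proof (via Proposition~\ref{prop:simplex2}). You don't actually need it---your argument only uses $\la E',\p\ra^2=m(\p_{T'})$ together with your explicit knowledge of $E'$---so you might phrase that sentence accordingly.
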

\end{mdframed}

The assumption that no two points in $\q$ are coincident is
required. Otherwise one could create a $\q$ that is identical
to $\p$ except that one high valence vertex in $(G,\p)$ 
is split
in $(H,\q)$ into two distinct vertices with coincident locations,
each with enough edges to the rest of $\q$ so that both new vertices can
be trilaterated.

In this theorem, we make no prior assumptions on $G$
and its number of vertices. Nor do we assume, a priori, that $\q$ is generic.
The existence of $\q$ and $H$
with the appropriate properties is itself a certificate of correctness,
though we still need to assume that $\p$ is generic.
Thus if we are able to  
interpret some portion of $\v$, corresponding 
to $\v^-$ in the statement 
of the theorem,  using a 
trilateration $H$, then we know that we have correctly 
realized a corresponding part $\p_S$  of $\p$.

Finally, suppose  we assume 
that $\p$ is a generic configuration of $n$ points
and that $G$ allows for trilateration with
$\v= \la G, \p \ra^2$.
 Thus we must
be able to take the data $\v$, and find (using brute force)
some $H$ and $\q$ of $n$ points
such that $H$ trilaterates  $\q$,
and 
such that 
$\v=\la H,\q \ra^2$. From Theorem~\ref{thm:triBK2ALG}, 
we then know that $\q=\p$. This gives us a formal justification
for TRIBOND,
the brute force unlabeled trilateration algorithm of~\cite{dux2}.

\paragraph{Theorems~\ref{thm:triBK2} and~\ref{thm:triBK2ALG}, sketch}
Informally, Theorems~\ref{thm:triBK2} and~\ref{thm:triBK2ALG} say that, generically, 
we do not need to know the edge labels for the 
trilateration reconstruction to succeed.

The intuitive reasoning is as follows.  The trilateration 
process starts from a known $K_{d+2}$ and then locates
each additional point by ``gluing'' a new $K_{d+2}$ (with one
new point) onto a $K_{d+1}$ inside the already visited $K_{v}$
over the $v$ previously reconstructed vertices.
The idea, then, is to find the labels as we locate points
by using Proposition~\ref{prop:ind}
iteratively: initially 
to find a ``base'' $K_{d+2}$ 
to start the trilateration 
process, and then, after measuring all the edges between the 
visited points, to find subsequent $K_{d+2}$ subconfigurations
that, each, add one more
point.  When $d\ge 2$, there is only one way 
to do the gluing, because generic $(d+1)$-simplices
do not have any ``self-congruences''.

Even though the steps above are conceptually very simple,
the details require some care. 
We now fill in the sketch above.

\begin{lemma}
\label{lem:noSim}
Suppose that $\p$ is a configuration of $n$ points in dimension $d$ so that
either $l(\p)$ is generic in $L_{d,n}$ or $m(\p)$ is generic 
in $M_{d,n}$.  Then no two subconfigurations of 
at least three points in $\p$ are similar to each other, 
unless the two subconfigurations consist of the same points,
 in the same order.
\end{lemma}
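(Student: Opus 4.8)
\textbf{Proof plan for Lemma~\ref{lem:noSim}.}

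The plan is to argue by contradiction using the genericity of $l(\p)$ in $L_{d,n}$, reducing the existence of a nontrivial similarity between subconfigurations to a nontrivial polynomial relation among the coordinates of $l(\p)$, which would contradict genericity. First I would set up notation: suppose $\p_I$ and $\p_J$ are subconfigurations, each with at least three points, indexed by sequences $I = (i_1, \dots, i_m)$ and $J = (j_1, \dots, j_m)$ with $m \ge 3$, and suppose there is a similarity $\phi$ of $\RR^d$ (a scaling composed with an isometry) carrying $\p_{i_a}$ to $\p_{j_a}$ for each $a$. Unwinding this, for every pair $a < b$ we get $l_{j_a j_b}(\p) = s \cdot l_{i_a i_b}(\p)$ for a single fixed scale factor $s > 0$. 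The goal is to show that $I = J$ as ordered sequences (so the similarity is the identity scale $s=1$, forced since a configuration is trivially similar to itself only via $s=1$ once it has a repeated point-to-point matching, or rather: once $I=J$ the statement is vacuous).

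The key step is to eliminate the unknown scale $s$ to obtain honest polynomial relations in the $l_{k\ell}(\p)$. Since $m \ge 3$, pick two pairs of edges, say the edge $\{i_1,i_2\}$ and $\{i_1,i_3\}$ within $\p_I$, and the corresponding edges $\{j_1,j_2\}$, $\{j_1,j_3\}$ within $\p_J$; then $s = l_{j_1 j_2}(\p)/l_{i_1 i_2}(\p) = l_{j_1 j_3}(\p)/l_{i_1 i_3}(\p)$, which gives the scale-free relation $l_{j_1 j_2}(\p)\, l_{i_1 i_3}(\p) = l_{j_1 j_3}(\p)\, l_{i_1 i_2}(\p)$. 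More usefully, squaring everything lands us in the squared picture: writing $m_{k\ell}(\p) = l_{k\ell}(\p)^2$, every relation becomes a polynomial identity in the coordinates of $m(\p)$, and since $\p$ generic implies $m(\p)$ is generic in $M_{d,n}$ (Theorem~\ref{thm:Mvariety}) and $l(\p)$ is generic in $L_{d,n}$ (Theorem~\ref{thm:Lvariety}), any such identity must hold \emph{identically} on $M_{d,n}$, i.e.\ for \emph{all} configurations. So it suffices to exhibit, for each ``bad'' pair of distinct ordered index sequences $I \ne J$, a single configuration $\p^\circ$ in $\RR^d$ for which no similarity matches $\p^\circ_I$ to $\p^\circ_J$ in order — indeed a generic real $\p^\circ$ will do, since a generic configuration has no nontrivial self-similarities among its labeled subsets. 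Thus the whole lemma reduces to the purely classical fact that a generic real point configuration has this no-self-similarity property, which one checks directly: a similarity matching two three-or-more-point subsets imposes a genuine algebraic constraint (e.g.\ the ratio relation above, after clearing the scale, together with the rigidity of the match) that a generic configuration avoids.

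The main obstacle, and the place requiring care, is the bookkeeping in the contradiction-by-genericity step: one must phrase ``$\p_I$ is similar to $\p_J$ in this particular order'' as the vanishing of a specific polynomial in the coordinates of $l(\p)$ (or $m(\p)$), uniformly over the finitely many choices of ordered pairs $(I,J)$ with $I \ne J$, $|I| = |J| \ge 3$; one needs the scale-elimination to produce a relation that is \emph{not} identically zero on $L_{d,n}$, which is exactly where one invokes the existence of one configuration violating it. A secondary subtlety is the degenerate case $s$ complex or the configuration lying in a lower-dimensional affine subspace — but these are excluded by $l(\p)$ generic (equivalently, $\p$ affinely spans $\RR^d$, so $m(\p)$ is a smooth point of $M_{d,n}$ by Theorem~\ref{thm:Mvariety}), so the similarity is a genuine real similarity and the argument goes through. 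I would also remark that the hypothesis $m \ge 3$ is essential: two pairs of points are always similar, which is why the statement restricts to subconfigurations of at least three points.
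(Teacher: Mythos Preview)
Your approach is essentially the paper's: similarity of ordered subconfigurations is proportionality of their edge-length vectors, which the paper packages as the rank-one condition on the $\binom{k}{2}\times 2$ matrix $(l(\p_I)\ \ l(\p_J))$; the vanishing of its $2\times 2$ minors is exactly your scale-eliminated relation $l_{j_1 j_2}\, l_{i_1 i_3} = l_{j_1 j_3}\, l_{i_1 i_2}$, and genericity of $l(\p)$ in $L_{d,n}$ finishes it. One point to tighten: when you argue the relation is not identically zero on $L_{d,n}$, you invoke ``a generic real $\p^\circ$ will do, since a generic configuration has no nontrivial self-similarities,'' which is what you are proving---instead just exhibit one explicit configuration (e.g.\ any three non-collinear points with a fourth chosen to break the specific ratio) where the minor is nonzero. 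Also, the detour through $M_{d,n}$ is unnecessary since your cross-ratio relation is already polynomial in the $l_{ij}$.
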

 Only the case of congruence is needed now, but similarities will be needed later in Section~\ref{sec:ugr}. 
\begin{proof}
Two ordered configurations $\q$ and $\r$ of $k$ points are related by a similarity 
if and only if 
their vectors of $\binom{k}{2}$ (un)squared edges lengths are 
proportional (in the squared case, 
if the scaling in the similarity is $\lambda$, the effect on 
$m(\q)$ is to multiply it by $\lambda^2$).  
That is, if and only if both of the $\binom{k}{2}\times 2$
matrices 
\[
\begin{pmatrix}
    m(\q) & m(\r)
\end{pmatrix}
\qquad \text{and}\qquad 
\begin{pmatrix}
    l(\q) & l(\r)
\end{pmatrix}
\]
have 
rank at most one, which is a polynomial condition defined over $\QQ$ that is 
non-trivial when $k \ge 3$ (so that $\binom{k}{2} > 1$).

If $\q$ and $\r$ are 
similar 
subconfigurations of $\p$ with $k\ge 3$ points, 
then, by the argument above $l(\p)$ and $m(\p)$ 
satisify a non-trivial polynomial condition 
that does not hold over all of $L_{d,n}$ and 
$M_{d,n}$ respectively (there are configurations where 
$\q$ and $\r$ are not similar).  Hence, $l(\p)$ and $m(\p)$ 
are non-generic when $\p$ contains similar subconfigurations.
\end{proof}
\begin{remark}
\label{rem:fail}
The statement of Lemma~\ref{lem:noSim} is not true 
with only two points because any pair of two-point configurations
are similar.  Even worse for our intended application, 
any subconfiguration $(\p_i, \p_j)$ is congruent to
the subconfiguration $(\p_j, \p_i)$.
Because of this,
unlabeled trilateration over an edge measurement ensemble
will not directly work for $d=1$. (See Figure~\ref{fig:2flips}.)
In order to use trilateration over an unlabeled 
edge measurement ensemble in $1$ dimension, 
we would need to have an edge ensemble that 
not only allows for trilateration in $1$ dimension but 
also has enough edges to 
allow for trilateration in $2$ dimensions.
\end{remark}

The next lemma describes our main inductive step.

\begin{lemma}\label{lem:partial-trilat}
Let $d\ge 2$ and let $\p$ and $\q$ be configurations
of $n$ and $n'$ points respectively, 
with 
%so that $\p$ is generic and 
$m(\p)$ generic in 
$M_{d,n}$.  
We also suppose that no two points in $\q$
are coincident.
Let 
$G$ and $H$ be two edge measurement ensembles such that
$\la G, \p\ra^2 = \la H, \q\ra^2$.

Suppose
that we have two 
``already visited'' subconfigurations
$\q_{V'}$ and $\p_{V}$ with 
$\q_{V'} =\p_{V}$.

Suppose  we can 
find a set $F$  of  
$d+1$ 
%distinct 
edges in 
$H$ 
connecting some unvisited
vertex $\q_{i'}\in \q_{\bar{V'}}$ to 
some visited 
subconfiguration $\q_{R'}$ of $\q_{V'}$ with
$d+1$ vertices. 

Then we can find an unvisited
$\p_{i}\in \p_{\bar{V}}$ such that 
the two subconfigurations $\q_{V'\cup\{i'\}}$
and $\p_{V\cup\{i\}}$ are equal.
\end{lemma}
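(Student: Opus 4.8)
\textbf{Proof plan for Lemma~\ref{lem:partial-trilat}.}
The plan is to combine the ``consistency with $K_{d+2}$'' result, Proposition~\ref{prop:ind}, with the rigidity of generic simplices, Lemma~\ref{lem:noSim}. First I would set up the combinatorics. On the $\q$ side we have the unvisited vertex $\q_{i'}$ joined by $d+1$ edges $F\subseteq H$ to a $(d+1)$-point subconfiguration $\q_{R'}$ of the already-matched $\q_{V'}$. Since $\q_{V'}=\p_V$, the subconfiguration $\q_{R'}$ matches some $(d+1)$-point subconfiguration $\p_R$ of $\p_V$ (via the given equality of subconfigurations, which already fixes a vertex correspondence between $V'$ and $V$). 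Now consider the $D = \binom{d+2}{2}$ edges consisting of: the $d+1$ edges of $F$, together with the $\binom{d+1}{2}$ edges internal to $R$ (equivalently $R'$), whose lengths are already known to be realized inside $\p$ and inside $\q$. Collect the corresponding $D$ measured values from $\v$; call this vector $\w$. Because these values come from a configuration of $d+2$ points (namely $\q_{R'\cup\{i'\}}$), the vector $\w$ lies in $M_{d,d+2}$.

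The second step is to apply Proposition~\ref{prop:ind} on the $\p$ side. The hypothesis ``$m(\p)$ generic in $M_{d,n}$'' is exactly what that proposition needs, and we have just exhibited $D$ edges of $K_n$ (here I mean the $D$ edges of $E$ obtained by pulling the $F$-edges back to whatever vertices of $K_n$ they are attached to on the $\q$ side, reinterpreted as edges among $\p$'s vertices via the forgetting map $\pi_{\bar E}$) whose squared lengths $\w$ lie in $M_{d,d+2}$. I should be slightly careful here: the edges of $F$ in $H$ connect $\q_{i'}$ to vertices of $\q_{R'}$, and these same length values appear as coordinates of $\v = \la G,\p\ra^2$, hence as squared lengths of some edges of $G$ among the vertices of $\p$. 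So we get a sequence $E$ of $D$ edges of $K_n$ with $\la E,\p\ra^2 = \w \in M_{d,d+2}$. Proposition~\ref{prop:ind} then yields an ordered $(d+2)$-point subconfiguration $\p_T$ of $\p$ with $m(\p_T)=\w$; in particular $\p_T$ contains a $(d+1)$-point sub-simplex realizing the same squared edge lengths as $\p_R$.

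The third step is to pin down which point of $\p_T$ plays the role of the new vertex and to verify the subconfigurations glue correctly. By Lemma~\ref{lem:noSim} (in its congruence form, which is all we need here), the $(d+1)$-point sub-simplex of $\p_T$ matching $\p_R$'s edge lengths must actually be $\p_R$ itself, in the same vertex order — there are no spurious self-congruences of a generic $(d+1)$-simplex, and $d\ge 2$ is exactly what rules out the $1$-dimensional edge-swap pathology of Figure~\ref{fig:2flips}. Hence the remaining vertex of $\p_T$ is a well-defined $\p_i$ with $i\notin V$ (it cannot already be in $V$: if it were, the $d+2$ points $\p_T$ would lie in $\p_V$, and then $\q_{i'}$ together with $\q_{R'}$ would be congruent to that configuration, forcing $\q_{i'}$ to coincide with an existing vertex of $\q_{V'}$, contradicting the no-coincident-points hypothesis on $\q$). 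Finally, $\p_{V\cup\{i\}}$ and $\q_{V'\cup\{i'\}}$ agree: they agree on $V$ vs.\ $V'$ by assumption, and the position of $\p_i$ relative to $\p_R=\p_{V}|_R$ is determined up to congruence by the $d+1$ lengths in $F$ together with $\p_R$ being a full-dimensional simplex, matching how $\q_{i'}$ sits relative to $\q_{R'}$; since the already-matched part has $\ge d+1$ affinely independent points (generic simplex), the congruence extending $\q_{V'}=\p_V$ is unique and carries $\q_{i'}$ to $\p_i$.

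I expect the main obstacle to be bookkeeping rather than mathematics: carefully tracking the three vertex correspondences (the abstract $K_n$ edge indices behind $G$ and $H$, the correspondence $V'\leftrightarrow V$, and the new correspondence $i'\leftrightarrow i$) and making sure the $D$ chosen edges really are distinct and really do form the edge set reported in $\v$, so that Proposition~\ref{prop:ind} applies verbatim. The only genuinely delicate point is confirming $i\notin V$, i.e.\ that the newly found point is genuinely new; this is where the ``no two points of $\q$ coincident'' hypothesis is used, and it should be stated as a short separate claim.
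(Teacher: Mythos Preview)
Your proposal is correct and follows essentially the same route as the paper: build the $D$-tuple $\w$ from the $\binom{d+1}{2}$ internal edges of $\q_{R'}=\p_R$ together with the $d+1$ edges of $F$, observe $\w\in M_{d,d+2}$, apply Proposition~\ref{prop:ind} to produce $\p_T$, use Lemma~\ref{lem:noSim} to force the $(d+1)$-face of $\p_T$ to coincide with $\p_R$, and then use the no-coincidence hypothesis on $\q$ to certify that the remaining vertex $\p_i$ is unvisited. The paper's write-up makes one step more explicit than yours: it invokes Lemma~\ref{lem:mds} to obtain a congruence $\sigma:\q_{T'}\to\p_T$ and then argues that $\sigma$ fixes the $d+1$ affinely independent points of $\p_R$, hence is the identity, giving $\q_{i'}=\p_i$; your equivalent phrasing (``$\p_R$ is a full-dimensional simplex, so the $d+1$ distances determine the new point uniquely'') is fine, but when you write it up you should drop the qualifier ``up to congruence'' there, since that is precisely the reflection ambiguity you are ruling out.
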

\begin{proof}
Let $\q_{T'}$ be a subconfiguration consisting of, in some order, all the
points of $\q_{R'}$ along with $\q_{i'}$.
Let $\w:=m(\q_{T'})$.

The $d+1$ points of $\q_{R'}$ give rise to $C$ edge length measurements
in $\w$.  Because $\p_{V} = \q_{V'}$ the corresponding $d+1$ points 
$\p_R$ induce the same measurements.  By assumption, there must 
be $D - C = d+1$ edges $E_0\subseteq G$, corresponding to the the $d+1$
edges $F\subseteq H$ connecting the points of $\q_{R'}$ to 
$\q_{i'}$, so that $\la E_0,\p\ra^2 = \la F,\q\ra^2$.  
Adding the $C$ imputed edges from $\p_V$ to $E_0$ we get a 
set of $D$ edges $E\subseteq G$ 
(not necessarily distinct),
so that $\w = \la E,\p\ra^2$.

Since $m(\p)$ is generic, we can now apply 
Proposition~\ref{prop:ind} using 
the existence of $E$  to 
conclude that there must be a  
subconfiguration $\p_{T}$ of $\p$
with
$\w = m(\p_{T})$.
Now, we use  Lemma~\ref{lem:mds}
to  conclude that
$\q_{T'}$ and $\p_{T}$ 
are related by a 
congruence.  
(Now that we have $\p_{T}$, we don't need $E$
any more.)

Since $\q_{R'}$ is a subconfiguration of $\q_{T'}$, 
$\q_{R'}$ must be congruent to its associated 
subconfiguration 
of $\p_{T}$, which we may call
$\p_{R_0}$.
% From genericity of $m(\p)$ in $M_{d,n}$,
% we have, using Lemma~\ref{lem:genPull}, that 
% $l(\p)$ is generic in $L_{d,n}$.
Thus from Lemma~\ref{lem:noSim},
we know that 
$\q_{R'}$ is congruent to no other
subconfiguration of $\p$. 
Meanwhile, 
$\q_{R'}$ is a subconfiguration of $\q_{V'}$ and thus
also equal to  some subconfiguration 
$\p_{R}$
of $\p_{V}$.
Thus $\q_{R'}$, 
$\p_{R_0}$ and $\p_{R}$ must all be 
equal.
Since the congruence $\sigma$
that maps $\q_{T'}$ to $\p_{T}$
fixes the $d+1$ points of $\q_{R'}$, $\sigma$
must be the identity and we must have
$\q_{T'}=\p_{T}$.

Let $\p_{i}$ be the ``new'' point in $\p_{T}\setminus \p_{R}$, which also
must equal $\q_{i'}$.
If $\p_{i}$ was already visited in $\p_{V}$, 
then the same position would have already been visited
by some point in $\q_{V'}$. This together with the fact that no points are
coincident in $\q$ would contradict the assumption that 
$\q_{i'}\in \q_{\bar{V'}}$.
Thus 
$\q_{V'\cup\{i'\}}=\p_{V\cup\{i\}}$.
\end{proof}

We can now apply the above 
lemma iteratively.

\begin{lemma}
\label{lem:triBK}
Let the dimension $d \ge 2$. 
Let $\p$ be  a
configuration of  $n$ points 
such that $m(\p)$ is generic in $M_{d,n}$.
Let $G$ be a measurement ensemble.
Let $\v :=\la G,\p \ra^2$.

Suppose that $\q$ is a 
configuration of $n'$
points
with no two points in $\q$ being coincident.
And suppose that 
$H$
is an edge measurement ensemble
that allows for 
trilateration
and such that 
$\la H, \q \ra^2$ also equals $\v$.

Then, there is a sequence of indices $S=(s_1,\ldots, s_{n'})$ 
so that, up to congruence, $\p_{S} = \q$.  Moreover, 
the vertices appearing in $S$ are exactly those that 
are 
endpoints of edges in the support of $G$.
After renaming each edge $\{i,j\}$ in $H$ as $\{s_i,s_j\}$, 
we have $H = G$.
\end{lemma}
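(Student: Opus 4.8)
The plan is to ``shadow'' the trilateration of $(H,\q)$ on $(G,\p)$, matching vertices one at a time, bootstrapping the base $K_{d+2}$ with Proposition~\ref{prop:ind} and then invoking Lemma~\ref{lem:partial-trilat} for every later vertex. First I would fix the vertex order on $\q$ that witnesses that $H$ allows for trilateration: a base $K_{d+2}$ on $\q$'s first $d+2$ vertices, and, for each $j$ with $d+2 < j \le n'$, a trilateration sequence of $d+1$ edges of $H$ joining $j$ to $d+1$ vertices of index $< j$. Everything past the base step will be pure orchestration of Lemma~\ref{lem:partial-trilat}; the only genuinely new work comes at the very end.

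For the base step, the $D$ edges of $H$'s base $K_{d+2}$ record $\w := m(\q_{\{1,\dots,d+2\}}) \in M_{d,d+2}$, and those $D$ values occur as a subsequence of $\v = \la G,\p\ra^2$, so $\w = \la E,\p\ra^2$ for the length-$D$ subsequence $E$ of $G$'s edges at the matching positions. Since $m(\p)$ is generic in $M_{d,n}$, Proposition~\ref{prop:ind} produces a $(d+2)$-point subconfiguration $\p_T$ of $\p$, ordered so that $m(\p_T) = \w$; as $\w$ lists \emph{all} pairwise squared distances, Lemma~\ref{lem:mds} forces $\p_T$ and $\q_{\{1,\dots,d+2\}}$ to be congruent. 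I would then apply this congruence to $\q$ once and for all (harmless: it changes neither $\la H,\q\ra^2$ nor the non-coincidence of $\q$'s points), so that afterwards $\q_{\{1,\dots,d+2\}} = \p_T$ exactly; this initializes the ``visited'' subconfigurations.

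The inductive step is essentially a restatement of Lemma~\ref{lem:partial-trilat}. Assuming the first $j-1$ vertices of $\q$ have been matched, so $\q_{\{1,\dots,j-1\}} = \p_V$ for some index set $V$, the trilateration sequence of $H$ for vertex $j$ is precisely a set $F$ of $d+1$ edges of $H$ joining the unvisited vertex $j$ to a visited $(d+1)$-vertex subconfiguration $\q_{R'}$ with $R' \subseteq \{1,\dots,j-1\}$ (here one uses that in the chosen order all neighbours of $j$ have index below $j$). The remaining hypotheses of Lemma~\ref{lem:partial-trilat} — $m(\p)$ generic in $M_{d,n}$, no two points of $\q$ coincident, $\la G,\p\ra^2 = \la H,\q\ra^2$ — all hold, so it yields a new vertex of $\p$ to append to $V$, again with $\q_{\{1,\dots,j\}} = \p_V$. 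Running this from $j = d+3$ to $j = n'$ matches all $n'$ vertices and gives a sequence $S$ of distinct vertices of $\{1,\dots,n\}$ with $\p_S = \q$ (equality after the initial congruence, hence $\q \cong \p_S$); in particular $n' \le n$.

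The last step, which I expect to be the only place beyond quoting earlier results that needs care, is to read off $G = H$ under the relabeling $S$ and to identify the supports. For each position $\ell$ of $\v$, with edge $G_\ell$ in $G$ and $H_\ell$ in $H$, we have $\la G_\ell,\p\ra^2 = v_\ell = \la H_\ell,\q\ra^2$, which by $\q = \p_S$ equals the squared length in $\p$ of the edge $S(H_\ell)$; thus $G_\ell$ and $S(H_\ell)$ have equal squared length in $\p$. Since $m(\p)$ is a generic point of the irreducible variety $M_{d,n}$ (Theorem~\ref{thm:Mvariety}), and no hyperplane $\{x_i = x_j\}$ (defined over $\QQ$) contains $M_{d,n}$, the coordinates of $m(\p)$ are pairwise distinct, i.e. distinct edges of $K_n$ have distinct squared lengths in $\p$; this forces $G_\ell = S(H_\ell)$, so $H = G$ under $S$. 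Finally, because $H$ allows for trilateration every vertex of $\q$ is an endpoint of some edge of $H$, so every vertex appearing in $S$, being $S(i')$ for some $i'$, is an endpoint of an edge of $G$; conversely every endpoint of a $G$-edge is $S$ applied to an endpoint of the corresponding $H$-edge, hence appears in $S$. Therefore the vertices appearing in $S$ are exactly the support of $G$. The true obstacle is not a single deep step but keeping the vertex orderings on $\q$, on $\p_T$, and on each newly matched vertex mutually consistent so that the hypotheses of Lemma~\ref{lem:partial-trilat} hold verbatim at every stage, together with this concluding genericity observation about coordinates of points of $M_{d,n}$.
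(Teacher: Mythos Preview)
Your proposal is correct and follows essentially the same approach as the paper: bootstrap the base $K_{d+2}$ via Proposition~\ref{prop:ind} and Lemma~\ref{lem:mds}, then iterate Lemma~\ref{lem:partial-trilat} along the trilateration sequence, and finish by using genericity to force distinct edges to have distinct squared lengths. Your final step is in fact slightly more careful than the paper's, since you argue directly from genericity of $m(\p)$ in $M_{d,n}$ (the actual hypothesis) rather than from genericity of $\p$ itself.
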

\begin{proof}
For the base case, 
the trilateration assumed in $H$
guarantees a $K_{d+2}$ contained in $H$, over
a $(d+2)$-point subconfiguration  $\q_{T'}$ of  $\q$.
Define $\w:=m(\q_{T'})$.
We have 
$\q \in M_{d,d+2}$.

Using the
fact that
$\la G, \p\ra^2 = \la H, \q\ra^2$ 
we can apply Proposition~\ref{prop:ind} to this $\w$,  $\p$
and appropriate sequence of edges $E$ taken from $G$.
From this, we conclude 
that
there is a ${d+2}$ point
subconfiguration  
$\p_{T}$ of 
%$\q_{S'}$ 
$\p$ 
such that
%$m_{E'}(\q_{S'})=\w=m(\q_{T'})$.
$\w=m(\p_{T})$.
From Lemma~\ref{lem:mds},
up to congruence, we have
$\q_{T'} = \p_{T}$.

Going forward, 
assume that  this congruence  
has been factored into $\p$. 
Then, to proceed inductively, assume that we have  
two ``visited'' subconfigurations such that 
$\q_{V'}=\p_{V}$.  Initially $V = T$ and $V' = T'$.
With this setup, we may now follow the trilateration of $\q$, 
iteratively applying
Lemma~\ref{lem:partial-trilat} until we have visited all of  
$\q$.  At the end of the process, $\q_{V'} = \p_{V}$ with 
$\q_{V'}$ a reordering of $\q$.
Inverting this ordering, we have $\q=\p_{S}$, where $S$ is 
an ordering of the visited points in $\p$.

Since $\p$ is generic, then no two distinct edges  can have the same squared length. 
Since $\q$
is a subconfiguration of our generic
$\p$
then no two distinct edges among points
in $\q$
can have the same squared length. 
This means there is a \emph{unique} way for  $\v$ to arise from
$\q$ and a unique way for $\v$ to arise from $\p$.  Hence, 
after vertex relabeling from $S$, we have $H = G$.
Since the vertices
that are endpoints of edges in the support of $H$ correspond exactly to the points of $\q$, 
then the vertices
that are endpoints of edges in the support of $G$
correspond to the points of $\p_{S}$,
as in the statement.

\end{proof}

And we can now finish the proof of one of the main theorems of this section:
\begin{proof}[Proof of Theorem~\ref{thm:triBK2ALG}]
First we remove from $\v$ the measurements which do not
appear in $\v^-$. We also remove the associated edges from
$G$, to obtain $G^-$.
Since $\p$ is generic, then $m(\p)$ is 
generic in $M_{d,n}$ from Theorem~\ref{thm:Mvariety}.
Then we simply apply Lemma~\ref{lem:triBK}.
\end{proof}

With some other added assumptions, 
we can use an assumption of genericity 
on $\p$ to
automatically obtain genericity
for $m(\q)$ in $M_{d,n}$. To see this, we first use the following definition. 

\begin{definition}
Let $d$ be a fixed dimension.
Let $E$ be an edge measurement ensemble
 with $n\ge d+1$.
We say $E$ is 
%A ordered set of $E$ of edges graph $\Gamma$ with $n \ge d+1$ vertices and $|E|$ edges is 
\defn{infinitesimally rigid} in $d$ dimensions,
if, starting at some  generic 
(real or complex) configuration $\p$,
there are no differential  motions of $\p$ in $d$ dimensions that preserve 
all of the squared lengths among the edges of $E$,
except for differential congruences.

When an edge measurement ensemble is infinitesimally  rigid, then the lack
of differential motions holds over a Zariski
open subset of
configurations that includes all generic configurations.

Letting $m_E(\p)$ be the map from
configuration space to $\CC^{|E|}$
measuring the squared lengths of the edges
of $E$, 
infinitesimal rigidity means 
that  of the differential of $m_E(\cdot)$ at $\p$ is $(dn-C)$-dimensional.

\end{definition}
The following proposition follows exactly as Proposition~\ref{prop:infind}.
\begin{proposition}
\label{prop:infrig}
If $E$ is  infinitesimally rigid, then  of $m_E(\cdot)$ acting on all configurations
is $(dn-C)$-dimensional. Otherwise, the dimension of  is smaller.
\end{proposition}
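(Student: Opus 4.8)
The plan is to follow the proof of Proposition~\ref{prop:infind} essentially verbatim, with the rank bound $|E|$ replaced by $dn-C$. First I would invoke the general principle that, for the polynomial map $m_E(\cdot)\colon \CC^{dn}\to\CC^{|E|}$, the dimension of the Zariski closure of its image (which is $M_{d,E}$ by Definition~\ref{def:maps}, and which has the same dimension as the image itself, the image being constructible) equals the maximal, equivalently generic, rank of the differential $d\, m_E$. Concretely, as in the sketch for Proposition~\ref{prop:infind}: delete the non-smooth locus of $M_{d,E}$ and its preimage under $m_E$ from configuration space (both proper Zariski-closed, hence missed by all generic configurations), and apply Sard's theorem (\cite[Theorem 14.4]{harris}) to conclude that over a generic point of $M_{d,E}$ the fiber consists of configurations $\p$ at which $\rank\, d\, m_E(\p) = \dim M_{d,E}$; in particular a generic $\p$ realizes this rank.

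Next I would pin down this generic rank in terms of rigidity. At any configuration $\p$ whose affine span is all of $\CC^d$ — in particular at a generic $\p$, since $n\ge d+1$ — the trivial infinitesimal motions (the image of the differential of the congruence-group action at $\p$) form a subspace of dimension exactly $C=\binom{d+1}{2}$, and this subspace lies in the kernel of $d\, m_E(\p)$ because congruences preserve every squared edge length. Hence $\rank\, d\, m_E(\p)\le dn-C$ always. By the definition of infinitesimal rigidity, $E$ is infinitesimally rigid precisely when at a generic $\p$ the kernel of $d\, m_E(\p)$ is exactly this $C$-dimensional trivial subspace, i.e.\ when $\rank\, d\, m_E(\p)=dn-C$; otherwise the generic rank is strictly smaller. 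Combining with the previous paragraph gives $\dim M_{d,E}=dn-C$ when $E$ is infinitesimally rigid and $\dim M_{d,E}<dn-C$ otherwise, which is the assertion.

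The only point needing a little care — and the reason the proof is not completely automatic — is the semicontinuity step used to pass between the form of the definition (``infinitesimally rigid at \emph{some} generic configuration'') and the statement ``the generic rank of $d\, m_E$ is $dn-C$'': since $\rank\, d\, m_E$ is lower semicontinuous and bounded above by $dn-C$, attaining the value $dn-C$ at one generic configuration forces it to be the generic value and the maximal-rank locus to be Zariski-open containing all generic configurations, matching the remark following the definition of infinitesimal rigidity. Beyond that, the proof is a repetition of the argument for Proposition~\ref{prop:infind}; I expect the mildest obstacle to be the routine bookkeeping that the trivial-motion subspace has dimension $C$ at a generic complex configuration (the complex orthogonal group plus translations has dimension $\binom{d+1}{2}$ and acts with trivial stabilizer on a spanning configuration), which parallels Remark~\ref{rem:complex-rigid}.
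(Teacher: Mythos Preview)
Your proposal is correct and matches the paper's approach: the paper simply states that the proposition ``follows exactly as Proposition~\ref{prop:infind},'' and your write-up is precisely that argument with the rank target $|E|$ replaced by $dn-C$, together with the (standard) observation that the trivial infinitesimal motions form a $C$-dimensional subspace of $\ker d\,m_E(\p)$ at a spanning configuration. The extra care you take with semicontinuity and the dimension of the trivial-motion space is appropriate but does not diverge from what the paper intends.
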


\begin{lemma}
\label{lem:triBK2}
In dimension $d \ge 1$, let  $\p$ and $\q$ be two 
configurations with the same number of points $n \ge d+1$.
Suppose that 
$G$ and
$H$
are two edge measurement ensembles, each with 
the same number $k$ of edges,
and with $G$
infinitesimally  rigid in $d$ dimensions.
And suppose that  
$\v:=\la G,\p \ra^2 =
\la H,\q \ra^2$.

If $\p$ is a generic configuration, then 
$m(\q)$ is generic in $M_{d,n}$.
\end{lemma}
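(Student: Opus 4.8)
\textbf{Proof plan for Lemma~\ref{lem:triBK2}.}
The plan is to compare the dimensions of the relevant images and then use $\v$ as a witness that the image of $m_H(\cdot)$ is at least as big as the image of $m_G(\cdot)$. First I would recall that $G$ being infinitesimally rigid in $d$ dimensions means, by Proposition~\ref{prop:infrig}, that the Zariski closure of the image of $m_G(\cdot)$ over all $n$-point configurations is exactly $M_{d,n}$, which by Theorem~\ref{thm:Mvariety} has dimension $dn-C$ and is irreducible. Since $\p$ is generic in configuration space, by the last statement of Theorem~\ref{thm:Mvariety} (via Lemma~\ref{lem:genPush}) the point $m(\p)$ is generic in $M_{d,n}$; and since $\v = \la G,\p\ra^2$ is just the coordinate projection $\pi_{\bar G}$ applied to $m(\p)$, restricted to $\pi_{\bar G}(M_{d,n}) = M_{d,G}$, genericity is preserved, so $\v$ is a generic point of $M_{d,G}$, which equals $M_{d,n}$ up to the (trivial) reindexing from the infinitesimal rigidity.

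Next I would bring in $H$ and $\q$. Consider the map $m_H(\cdot)$ from $n$-point configuration space to $\CC^k$, and let $W$ be the Zariski closure of its image; $W$ is an irreducible variety defined over $\QQ$ (it is the image of an irreducible variety under a polynomial map, hence constructible, with irreducible Zariski closure), and $\v = \la H,\q\ra^2$ lies in $W$. The key point is that $\v$ is generic in $M_{d,n}$ (identified with $M_{d,G}$), so $M_{d,n}$ is the smallest $\QQ$-variety containing $\v$; since $W$ is a $\QQ$-variety containing $\v$, we get $M_{d,n}\subseteq W$. Hence $\dim W \ge dn - C$. On the other hand, $W$ is the image-closure of $m_H(\cdot)$, so $\dim W \le dn - C$ by Proposition~\ref{prop:infrig} (the image of $m_H$ can have dimension at most $dn-C$, with equality exactly when $H$ is infinitesimally rigid). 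Therefore $\dim W = dn-C$ and, since both $M_{d,n}$ and $W$ are irreducible with $M_{d,n}\subseteq W$ and equal dimension, $W = M_{d,n}$.

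Finally, $\v$ is generic in $M_{d,n}$ and $\v = m_H(\q)$ lies in the image of $m_H(\cdot)$, whose closure is $M_{d,n}$. To conclude that $m(\q)$ itself is generic in $M_{d,n}$, I would argue as follows: $m(\q)$ is obtained from $m_H(\q) = \v$ together with the remaining coordinates, but in fact the whole point $m(\q)\in\CC^N$ is the image of $\q$ under the full measurement map $m(\cdot)$, and $m_H(\q)$ is its projection $\pi_{\bar H}(m(\q))$. Since $\v=\pi_{\bar H}(m(\q))$ is generic in $\pi_{\bar H}(M_{d,n}) = M_{d,H}$ (which, $H$ being infinitesimally rigid, is again $M_{d,n}$ up to reindexing), and since genericity of a projection forces genericity of the point being projected when the fiber is appropriately constrained — here the fiber over $\v$ inside $M_{d,n}$ is finite-to-one once $H$ is rigid — any algebraic relation over $\QQ$ satisfied by all coordinates of $m(\q)$ would, after eliminating the finitely many fiber choices, descend to a nontrivial $\QQ$-relation on the coordinates of $\v$, contradicting genericity of $\v$. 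Hence $m(\q)$ is generic in $M_{d,n}$.

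\textbf{Main obstacle.} The delicate step is the last one: passing from ``$\pi_{\bar H}(m(\q))$ is generic'' back to ``$m(\q)$ is generic''. This requires that the rigidity of $H$ makes the fiber of $\pi_{\bar H}$ over a generic point of $M_{d,H}$ zero-dimensional (finite), so that no genuine algebraic freedom is hidden in the forgotten coordinates; I expect the cleanest route is to invoke that a generic fiber of a dominant morphism between irreducible varieties of equal dimension is finite, and that finiteness together with everything being defined over $\QQ$ lets one transfer genericity. Care is needed to phrase this so it covers the possibility that $m(\q)$ lies in the singular locus or in some proper subvariety not detected by the $H$-coordinates; infinitesimal rigidity of $H$ is exactly what rules this out.
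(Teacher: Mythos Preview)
Your overall strategy matches the paper's: show that the two image varieties $M_{d,G}$ and $M_{d,H}$ in $\CC^k$ coincide, deduce that $\v$ is generic in $M_{d,H}$, and then pull genericity back along $\pi_{\bar H}$ to $m(\q)\in M_{d,n}$.

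However, there is a genuine error in the middle. You write that $M_{d,G}$ ``equals $M_{d,n}$ up to the (trivial) reindexing from the infinitesimal rigidity,'' and then repeatedly treat $M_{d,n}$ and $M_{d,G}$ as the same object. They are not. Infinitesimal rigidity of $G$ only tells you $\dim M_{d,G} = dn - C = \dim M_{d,n}$; the variety $M_{d,G}$ lives in $\CC^k$, not $\CC^N$, and unless $G=K_n$ there is no reindexing identifying them. The correct statement is that $\v$ is generic in $M_{d,G}$ (by Lemma~\ref{lem:genPush}), and since $\v$ also lies in the $\QQ$-variety $M_{d,H}\subseteq\CC^k$, genericity forces $M_{d,G}\subseteq M_{d,H}$; combined with $\dim M_{d,H}\le dn-C$ and irreducibility, this gives $M_{d,G}=M_{d,H}$. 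That is exactly the paper's argument (phrased there via the intersection $M_{d,G}\cap M_{d,H}$), and your version works once you stop conflating $M_{d,G}$ with $M_{d,n}$.

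Your ``main obstacle'' at the end is real but self-inflicted: the finite-fiber elimination you sketch is more delicate than necessary. The paper simply invokes Lemma~\ref{lem:genPull}, whose proof is a one-line dimension argument (a proper $\QQ$-subvariety of $M_{d,n}$ has strictly smaller dimension, hence so does its image under $\pi_{\bar H}$, so its image cannot contain a generic point of $M_{d,H}$). No finiteness of fibers, no worry about the singular locus, is needed---only that $\dim M_{d,H}=\dim M_{d,n}$, which you already established. Replace your last paragraph with a direct appeal to Lemma~\ref{lem:genPull} and the proof is complete.
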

\begin{proof}
Recall the notation introduced in Definition~\ref{def:maps}.
The varieties  $M_{d,G}$ and  $M_{d,H}$, both subsets of 
$\CC^k$,
are 
defined over $\QQ$. 
They are irreducible
since they arise from closing images of 
$M_{d,n}$,
which is irreducible, under 
polynomial (in fact linear) maps.
Because $G$ is infinitesimally rigid,
$M_{d,G}$
is of 
dimension $dn-C$ from Proposition~\ref{prop:infrig}.
Likewise, $M_{d,H}$ is of dimension at most
$dn-C$. 
% if 
% $H$ is infinitesimally rigid, otherwise it is 
% of smaller dimension.

Our assumptions give us $\v \in M_{d,G}$
and
$\v \in M_{d,H}$.

We claim $M_{d,G} = M_{d,H}$.
If not, then 
$M_{d,G} \cap M_{d,H}$ is an algebraic
variety, defined over $\QQ$, of dimension strictly less 
than $dn-C$ (due to irreducibility),  
and thus could contain no generic points of $M_{d,G}$.
But we have assumed that $\v$ is in both, and thus also in
this intersection set. But since $\p$ is generic,
then $\v$ is  generic in $M_{d,G}$ (Lemma~\ref{lem:genPush}).
This contradiction thus establishes our claim.

Since
$M_{d,G} = M_{d,H}$, then $\v$ is also a generic point of 
of $M_{d,H}$.

Finally, since $M_{d,H}$ is the 
Zariski closure of the 
image of $M_{d,n}$ under 
the linear map $\pi_{{H}}(\cdot)$,
and since they 
have the same dimension, 
then from Lemma~\ref{lem:genPull} 
the preimage of $\v$
under $\pi_{{H}}(\cdot)$, which is $m(\q)$,
must be a generic point in 
$M_{d,n}$. 
\end{proof}

And we can now finish the proof of the other main theorem of this section:
\begin{proof}[Proof of Theorem~\ref{thm:triBK2}]
An edge measurement ensemble that allows for 
trilateration is always infinitesimally
rigid.  
We apply Lemma~\ref{lem:triBK2} to conclude that $m(\q)$ is generic in $M_{d,n}$. 
By assumption $G$ allows for trilateration.
Thus we can now apply Lemma~\ref{lem:triBK}, but with the roles of $\p$ and $\q$
reversed, as well as the roles of $G$ and $H$.
\end{proof}

\section{Paths and Loops}

Now we are ready to tackle the setting of paths and loops. 
Our reasoning will parallel that of Section~\ref{sec:warm}, but we will
need to upgrade most of the ingredients.

\subsection{First case: complete graphs $K_n$}
Our first step is to 
upgrade Theorem~\ref{thm:bk-linear-s}.
Dealing with the possibility of paths will
make us look at 
$L_{d,n}$ instead of $M_{d,n}$. Moreover this will 
require us to understand
the full group of linear automorphisms, instead of the simpler
setting of coordinate permutations.
This is a non-trivial algebraic geometry study, which
we have relegated to a companion paper~\cite{loopsAlg}. 
Here we summarize those results.

Any coordinate permutation that is 
induced by a vertex relabeling
must give rise to a linear automorphism of $L_{d,n}$.
Also due to the squaring construction, the
negation of any coordinate will give rise to a
linear automorphism. 
We call the linear automorphisms arising  
from vertex relabelings and 
coordinate negations the 
\defn{signed vertex relabelings}.
Any uniform scale on $\CC^N$
will also  give rise to  a linear automorphism of $L_{d,n}$
because uniformly 
scaling a configuration $\p$ in $\RR^d$ by a positive factor
produces another configuration 
with edge lengths scaled by the same amount.
Let us refer to the linear automorphism set generated by the above
as the \defn{expected} linear automorphisms of $L_{d,n}$. 

In our application, we are concerned that the measurement 
data of some complicated set of paths could yield a generic 
point in $L_{d,n}$.
If this were to happen, there 
would necessarily be a linear automorphism of $L_{d,n}$ 
that is \emph{not} one of the expected ones.  The 
following theorem rules this out, except 
when $\{d,n\} = \{2,4\}$.

\begin{theorem}[\cite{loopsAlg}]\label{thm:no-regges}
Let $d \ge 1$ and let $n \ge d+2$.  
Assume that $\{d,n\}\neq\{2,4\}$.
Then any linear automorphism $\A$ of $L_{d,n}$ is a scalar multiple
of a signed vertex relabeling.
\end{theorem}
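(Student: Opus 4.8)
The plan is to reduce the theorem to the single assertion that every linear automorphism $\A$ of $L_{d,n}$ is \emph{monomial} --- a generalized permutation matrix $\A=\Lambda\P$ with $\Lambda$ an invertible diagonal matrix and $\P$ a permutation matrix on the $N$ coordinates of $\CC^N$ --- or, equivalently, that $\A$ permutes the $N$ coordinate hyperplanes $H_e:=\{x_e=0\}$, one for each edge $e$ of $K_n$. Granting this, the rest is routine. A monomial map normalizes the subgroup $(\ZZ/2)^N$ of coordinate sign flips, so $\A$ descends through the squaring map $s$ to a linear automorphism $\bar\A=\Lambda^2\P$ of $M_{d,n}$, which is again monomial; using that $M_{d,n}$ is homogeneous, together with the known structure of its linear automorphism group (recall $M_{d,n}$ is linearly isomorphic to $\CS^{n-1}_d$), the diagonal factor is forced to be a scalar, and Theorem~\ref{thm:bk-linear-s} then identifies $\P$ with a vertex relabeling. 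Hence $\Lambda=c\,\S$ for a scalar $c$ and a sign flip $\S$, so $\A=c\,\S\P$ is a scalar multiple of a signed permutation induced by a vertex relabeling; the converse inclusion is clear.

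So all the content lies in proving $\A$ monomial, and this is exactly the step that must use $\{d,n\}\neq\{2,4\}$: $L_{2,4}$ is an irreducible sextic hypersurface in $\CC^6$, and the Regge involution of the tetrahedron is a linear automorphism of it that moves coordinate hyperplanes, hence is not monomial. To single out the coordinate hyperplanes intrinsically I would use the singular stratification of $L_{d,n}$. For $d\ge 2$, $L_{d,n}$ is irreducible, and away from the coordinate hyperplanes $s$ is a local isomorphism, so there $\sing(L_{d,n})$ coincides with $s^{-1}(\sing M_{d,n})=s^{-1}(M_{d-1,n})$; along the coordinate hyperplanes $s$ ramifies, so $\sing(L_{d,n})$ may also pick up components supported on the $H_e$. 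Iterating the ``singular locus'' operator produces a canonical flag containing $s^{-1}(M_{d-1,n})\supseteq\cdots\supseteq s^{-1}(M_{1,n})=L_{1,n}$, and $L_{1,n}$ is \emph{reducible}: it is a union of $(n-1)$-dimensional linear subspaces $V_\sigma$, essentially one for each consistent orientation of the edges of $K_n$ (for $n=3$, the four planes of Figure~\ref{fig:l13}). An automorphism preserves every member of this flag, hence permutes the components $V_\sigma$ together with the extra coordinate-hyperplane components, and the incidence pattern of all these components is tied to the edge set of $K_n$. Since each $H_e$ is the linear span of $H_e\cap L_{d,n}$, if the automorphism permutes this combinatorial data in an edge-set-respecting way it must permute the $H_e$, i.e. be monomial.

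The base of this argument is the case $d=1$ (where $n\ge 3$ and automatically $\{d,n\}\neq\{2,4\}$): here $L_{1,n}$ is itself an explicit arrangement of linear subspaces, and for $n=3$ the four planes are in sufficiently general position that one computes the linear symmetry group by hand and checks it is the expected one. For $n=d+2$ with $d\ge 3$ it may be cleaner to argue on the hypersurface directly: $L_{d,d+2}=\{F=0\}$, where $F$ is the unsquared Cayley--Menger polynomial of degree $2(d+1)$ obtained from the Cayley--Menger determinant by substituting $x_e^2$ for each squared length, so $F\circ\A=cF$ and one must show the linear symmetry group of $F$ is the expected group --- the failure of precisely this statement at $\{d,n\}=\{2,4\}$ being the Regge symmetry. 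For larger $n$ one can also view a $K_{d+2}$-coordinate-subspace section of $L_{d,n}$ as a copy of $L_{d,d+2}$ and use that the overlaps among many such sections leave no room for an exotic automorphism, even when $d=2$ and a single section carries a Regge symmetry.

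The step I expect to be the real obstacle is this rigidity claim itself: proving, uniformly in all $\{d,n\}\neq\{2,4\}$, that the permutation action of a linear automorphism on the canonical components of the singular stratification of $L_{d,n}$ (in particular on the pieces of $L_{1,n}$) determines the edge set of $K_n$ strongly enough to pin $\A$ down to a monomial map. The delicate points are computing $\sing(L_{d,n})$ along the coordinate hyperplanes, where $s$ ramifies, well enough to see which components sit on the $H_e$; and establishing the underlying ``rigidity of the $L_{1,n}$-arrangement,'' a statement purely about a subspace arrangement built from $K_n$ that holds for $n\ge 5$ but genuinely fails for $n=4$. Since $n\ge d+2$, the only such failure is $d=2$, $n=4$, which is exactly the excluded case.
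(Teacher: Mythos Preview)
The paper does not prove this theorem; it is explicitly deferred to the companion paper~\cite{loopsAlg} (see the sentence immediately preceding the statement), so there is no in-paper argument to compare against. Your proposal must therefore be assessed on its own.

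Your overall architecture --- reduce to showing $\A$ is monomial by exploiting the singular stratification of $L_{d,n}$, in particular the reducible bottom stratum $L_{1,n}$, then descend through the squaring map --- is a plausible line of attack, and the paper's own detailed description of $\sing(L_{2,4})$ (Section~\ref{sec:fano}) suggests that singular-locus analysis is indeed part of the actual proof in~\cite{loopsAlg}. Your identification of the $\{2,4\}$ exception with the failure of arrangement rigidity, and of the Regge symmetry as the culprit, is also correct.

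That said, there is a gap in the step you call ``routine.'' You assert that once $\bar\A=\Lambda^2\P$ is a monomial automorphism of $M_{d,n}$, ``the known structure of its linear automorphism group'' forces $\Lambda^2$ to be scalar, after which Theorem~\ref{thm:bk-linear-s} handles $\P$. But the paper explicitly remarks (just after Theorem~\ref{thm:auto2}) that the linear automorphism group of $M_{d,n}$ is \emph{large}: it contains the full congruence action of $GL_{n-1}$ on $\CS^{n-1}_d$. So invoking ``known structure'' does not immediately give what you need. What you actually require is the narrower statement that the \emph{monomial} automorphisms of $M_{d,n}$ are exactly scalars times vertex-relabeling permutations --- equivalently, that no non-scalar diagonal matrix preserves $M_{d,n}$. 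This is believable and may well be established in~\cite{loopsAlg}, but it is a separate claim needing its own argument; it does not follow from Theorem~\ref{thm:bk-linear-s}, which only treats honest permutations.

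Beyond that, you are candid that the substantive content --- computing $\sing(L_{d,n})$ along the ramification locus of $s$, and proving rigidity of the $L_{1,n}$ subspace arrangement for $n\ge 5$ --- is an obstacle you have identified rather than overcome. What you have written is a credible roadmap with the right shape and the right excluded case, but not yet a proof.
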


Interestingly, for $L_{2,4}$, there is, up to scale,  a discrete
set of unexpected linear automorphisms, which we can fully characterize.
Luckily for us, 
when an unexpected linear automorphism is representable  by a matrix with 
real elements, this matrix must include
some negative coefficients.
Such negative numbers cannot occur 
in any measurement ensemble based on paths or loops.

A linear automorphism  $\A$ of $L_{d,n}$ is \defn{real} if its matrix has
only real entries and \defn{non-negative} if its matrix contains only 
real and non-negative entries.

\begin{theorem}[\cite{loopsAlg}]
\label{thm:auto2}

The group of real linear automorphisms of $L_{2,4}$,
up to positive scale,
 is of order $23040$
and is isomorphic to the Weyl group $D_6$.  The subset of 
this group that are represented by matrices consisting of  
non-negative elements 
is a 
subgroup of order $24$ and acts by relabeling the vertices of $K_4$.

\end{theorem}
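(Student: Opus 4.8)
The plan is to present $L_{2,4}$ as a hypersurface, reduce the problem to the linear symmetries of one explicit sextic polynomial, exhibit the ``Regge'' symmetries special to the case $\{d,n\}=\{2,4\}$, recognize the full symmetry group as $W(D_6)$, and establish completeness from the geometry of the singular locus. Since $n=4$ we have $N=6$ and, by Theorem~\ref{thm:Lvariety}, $\dim L_{2,4}=dn-C=5$, so $L_{2,4}\subseteq\CC^6$ is a hypersurface, cut out by the sextic $P(\bl)$ obtained from the cubic defining $M_{2,4}$ --- the $3\times 3$ Gram determinant, equivalently the Cayley--Menger ``degenerate tetrahedron'' polynomial on four points --- by the substitution $m_{ij}\mapsto l_{ij}^2$. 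Because $L_{2,4}$ is irreducible and reduced (Theorem~\ref{thm:Lvariety}), $P$ is irreducible, so every invertible linear $\A$ with $\A(L_{2,4})=L_{2,4}$ satisfies $P\circ\A=\lambda_{\A}\,P$ for a nonzero scalar $\lambda_\A$. Hence $\Aut(L_{2,4})$ is precisely the stabilizer of the line $\CC\cdot P$ under the $GL_6$ action on sextic forms, and $\Aut^+_\RR(L_{2,4})$ is its real part modulo positive scalars; identifying this group is the whole task.

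\emph{Lower bound.} Three families of automorphisms are immediate: the image of $S_4$ permuting the six edge coordinates (vertex relabelings, order $24$, genuine $0/1$ matrices); the coordinate sign flips $(\ZZ/2)^6$ forced by the squaring construction; and the scalar $-I$, which already lies in the sign-flip group. The nonobvious family is the classical \emph{Regge symmetry} of the tetrahedron: there is an explicit linear involution of $\CC^6$ that interchanges the three pairs of opposite edges of $K_4$ while affinely recombining the edge values, and the classical Regge identity --- that this move preserves the squared volume, i.e.\ the Cayley--Menger polynomial --- shows that it preserves $\{P=0\}$. I would then verify that the group $\Gamma$ generated by all of these has order $23040$ and is isomorphic to $W(D_6)$: pick six reflections in $\Gamma$ whose products realize the Coxeter relations of type $D_6$, obtaining a surjection $W(D_6)\twoheadrightarrow\Gamma$, and bound $|\Gamma|$ from above by $23040$ via a faithful action on a small finite set, such as the irreducible components of $\sing(L_{2,4})$ or the Regge orbit of a generic point. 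The identification runs through a basis adapted to the Regge structure rather than the edge coordinates themselves.

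\emph{Upper bound and the nonnegative subgroup --- the main obstacle.} The hard step is showing there is nothing beyond the $W(D_6)$ just constructed. I would argue geometrically: any automorphism permutes the irreducible components of $\sing(L_{2,4})$, which by Theorem~\ref{thm:Mvariety} (transported through the squaring map) come from the loci of affinely degenerate sub-configurations together with their further splitting under the squaring map; one then shows the induced action on this finite incidence structure is faithful with image inside $\Gamma$, which forces equality. A less illuminating alternative is to compute the linear symmetry group of the explicit sextic $P$ directly, which is tractable because $P$ has few monomials and much built-in symmetry. For the final assertion, the nonnegative elements form a finite sub-monoid, hence a subgroup; a nonnegative matrix of finite order has a nonnegative inverse and is therefore a generalized permutation matrix $DP$ with $D$ a positive diagonal matrix and $P$ a permutation. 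Passing $DP$ through the squaring map, $D^2P$ is a monomial linear automorphism of $M_{2,4}$; since the linear automorphisms of $M_{2,4}\cong\CS^{3}_2$ are the congruences $S\mapsto gSg^{\top}$ up to scale (Theorem~\ref{thm:Mvariety}), a monomial one forces $g$ to be monomial, hence $D^2P$ --- and so $DP$, up to a positive scalar --- to be a vertex relabeling of $K_4$. Thus the nonnegative subgroup is exactly the order-$24$ group of vertex relabelings, which completes the proof.
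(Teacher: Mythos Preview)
The paper does not prove Theorem~\ref{thm:auto2} here; it is quoted from the companion paper~\cite{loopsAlg}, so there is no in-paper argument to compare against directly. Your outline has the right shape --- $L_{2,4}$ is an irreducible sextic hypersurface, the Regge involutions together with the sign flips and $S_4$ do generate a copy of $W(D_6)$, and the singular locus (described in Appendix~\ref{sec:fano} as $60$ three-planes) is the natural invariant for the upper bound. Two places, however, need substantially more than you have written.

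\emph{Upper bound.} You assert that the action on the components of $\sing(L_{2,4})$ is faithful ``with image inside $\Gamma$'', but the second clause is exactly what must be proved. Faithfulness is easy (the $60$ three-planes span $\CC^6$), but you still have to show that the combinatorial automorphism group of the incidence structure on those $60$ planes is no larger than $W(D_6)$. That is the substantive step, and your sketch does not touch it. The companion paper handles it by a detailed analysis of that incidence structure together with explicit computation; your alternative of computing the stabilizer of the sextic directly is feasible but is a real computation, not a remark.

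\emph{Nonnegative subgroup.} Your reduction ``nonnegative of finite order $\Rightarrow$ monomial $DP$'' is correct, as is ``$DP$ on $L_{2,4}$ $\Rightarrow$ $D^{2}P$ on $M_{2,4}$''. The next step fails as written. Theorem~\ref{thm:Mvariety} says nothing about $\Aut(M_{2,4})$; and even granting the $S\mapsto gSg^{\top}$ description of $\Aut(\CS^{3}_{2})$, the linear isomorphism $M_{2,4}\cong\CS^{3}_{2}$ is \emph{not} monomial in the edge coordinates $m_{ij}$, so ``monomial in $m_{ij}$'' does not force $g$ to be monomial. What you actually need is that every monomial automorphism of $M_{2,4}$ is a scalar times a vertex relabeling; Theorem~\ref{thm:bk-linear-s} gives this only for $0/1$ permutation matrices, and extending it to positive-diagonal monomials requires a separate argument (for instance, checking how a diagonal rescaling acts on the monomials of the Cayley--Menger cubic). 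Alternatively --- and this is in the spirit of what the present paper actually does elsewhere --- once the full group is known as $23040$ explicit matrices one can simply enumerate the nonnegative ones, exactly as in the Magma check in the proof of Theorem~\ref{thm:verLoop}.
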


\subsection{Small images}
\label{sec:maps}

Proposition~\ref{prop:simplex2} above is a theorem about
edge measurement ensembles where  over all configurations is low
dimensional. Here we need a similar proposition that applies to linear maps acting on 
$L_{d,n}$.
This will also require some non-trivial analysis of 
linear maps acting on $L_{d,n}$ which we relegate to the companion paper~\cite{loopsAlg}.

Let $d \ge 1$.
Recall that $D:= \binom{d+2}{2}$.
In this section,
$\E$ will be 
a $D\times \edgecard$ matrix of rank $r$
where 
$r$ is some number at most $D$.
We will also identify the symbol $\E$ with  
the linear map from 
$\CC^N$ 
to
$\CC^D$ that it induces.
We will restrict this map to its action on $\LN$.
Our goal is to study these restricted linear maps where the 
dimension of  is strictly less than $r$. 
In particular, this will occur when $\E(\LN) = L_{d,d+2}$.

\begin{definition}
We say that $\E$ has 
$K_{d+2}$ support 
if it depends only on measurements supported over the $D$ 
edges corresponding to a $K_{d+2}$ subgraph of $K_n$. 
Specifically,  
all the columns of the matrix 
$\E$ are zero, except for at most $D$ of
them, and these non-zero columns index edges 
contained within a single $K_{d+2}$.
\end{definition}

The following  
is proven in~\cite{loopsAlg}.
\begin{theorem}
\label{thm:linImage}
Let $d\ge 1$ and $n\ge d+2$.
Let $\E$ be  a 
$D\times \edgecard$ matrix with rank $r$.
Suppose that  $\E(L_{d,n})$,
a constructible set,  
is not of dimension $r$.
Then  $r=D$ and $\E$ has $K_{d+2}$ support.
\end{theorem}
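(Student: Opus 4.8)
The plan is to convert $\dim\E(L_{d,n})$ into the generic rank of a differential, then dispatch by hand the cases where the rigidity theory already developed for $M_{d,n}$ suffices, isolating the one genuinely new case (which is where the companion paper \cite{loopsAlg} does its work).

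\textbf{From a dimension to a rank.} Write $l(\cdot)$ for the Euclidean length map, $R(\p)$ for the $\edgecard\times dn$ rigidity matrix of $K_n$, and $\Lambda$ for the diagonal matrix with entries $l_{ij}(\p)$. The chain rule shows that the Jacobian of $\E\circ l$ at $\p$ equals $c\,\E\,\Lambda^{-1}R(\p)$ for a nonzero constant $c$. Since $l$ is a dominant morphism onto $L_{d,n}$ (for $d\ge 2$; when $d=1$, $L_{1,n}$ is a finite union of $(n-1)$-dimensional linear subspaces and the theorem follows from elementary linear algebra, so assume $d\ge 2$), the Sard-theorem argument behind Proposition~\ref{prop:infind} gives $\dim\E(L_{d,n})=\rank\big(\E\,\Lambda^{-1}R(\p)\big)$ for generic $\p$; by Theorem~\ref{thm:Lvariety} a generic $\p$ also makes $l(\p)$ generic in $L_{d,n}$ and $R(\p)$ of rank $dn-C$, so there is no degeneracy issue.

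\textbf{Reduction by support.} Let $S$ be the support of $\E$ --- the set of edges indexing columns that are not identically zero --- and factor $\E=\E'\circ\pi_{\bar S}$, where $\E'\colon\CC^{|S|}\to\CC^{D}$ has rank $r$ and no zero column, so $|S|\ge r$. If $|S|=r$, then $\E'$ is injective, hence $\dim\E(L_{d,n})=\dim\overline{\pi_{\bar S}(L_{d,n})}$; since coordinatewise squaring is finite and commutes with $\pi_{\bar S}$, this equals $\dim M_{d,S}$ in the notation of Definition~\ref{def:maps}. Proposition~\ref{prop:simplex2} then applies verbatim: if this dimension is less than $r$, then $|S|=D$ and $S$ is the edge set of a $K_{d+2}$, which is exactly the conclusion since $r=|S|=D$. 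If instead $|S|>r$ but $S$ is infinitesimally independent, then $M_{d,S}=\CC^{|S|}$, so $\overline{\pi_{\bar S}(L_{d,n})}=\CC^{|S|}$ (using irreducibility of $L_{d,n}$) and $\E'$ is dominant onto a space of dimension $r$, so the image is full-dimensional. Hence the only remaining case to rule out is $|S|>r$ \emph{and} $S$ infinitesimally dependent.

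\textbf{The remaining case (the main obstacle).} Here the reduction to $M_{d,n}$ breaks down: the configuration-dependent rescaling $\Lambda^{-1}$ and the branch structure genuinely distinguish $L_{d,n}$ from $M_{d,n}$, and one must show $\rank(\E\,\Lambda^{-1}R(\p))=r$ directly for generic $\p$. I would attempt a combinatorial induction on $S$ modeled on the proof of Proposition~\ref{prop:simplex}: peel off a vertex $v$ of low $S$-degree, use the $d$ infinitesimal freedoms at $\p_v$ to produce --- after multiplying by the generic diagonal $\Lambda^{-1}$ --- output directions realizing $\min(d,\deg_S v)$ new coordinates not annihilated by $\ker\E'$, and recurse on the edges away from $v$. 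The point to establish is that the generic rescaling cannot collapse the independence supplied by the rigidity matrix. Where this elementary step is not strong enough --- essentially when $S$ contains large rigid circuits, so that $\ker\E'$ could align with Cayley--Menger relations --- I would invoke the classification of deficient-image linear maps on $L_{d,n}$ proved in \cite{loopsAlg}, which rests on the automorphism descriptions in Theorems~\ref{thm:no-regges} and~\ref{thm:auto2} together with a dimension count isolating the exceptional $L_{2,4}$ symmetries. Combining the cases, the hypothesis of the theorem can be met only in the first situation above with $S$ the edges of a $K_{d+2}$, i.e.\ $r=D$ and $\E$ has $K_{d+2}$ support. I expect essentially all the work to lie in this last case: controlling, over a generic configuration, the interplay of $\Lambda^{-1}$, the rigidity matrix of $K_n$, and an arbitrary kernel of $\E'$ when the support carries nontrivial rigidity dependencies.
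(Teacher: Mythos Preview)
The paper does not prove Theorem~\ref{thm:linImage}: it states the result and defers the entire proof to the companion paper \cite{loopsAlg}. So there is no in-paper argument to compare against, and your reduction (Jacobian rank, support factoring, the case split on $|S|$ versus $r$ and on independence of $S$) is genuine content that goes beyond what this paper contains. Your identification of the ``remaining case'' ($|S|>r$ with $S$ infinitesimally dependent) as the locus of real difficulty is also correct.

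The gap is that your handling of that case is circular. You say that where the peeling induction fails you would ``invoke the classification of deficient-image linear maps on $L_{d,n}$ proved in \cite{loopsAlg}.'' That classification \emph{is} Theorem~\ref{thm:linImage}; it is precisely the statement under discussion. So the proposal reduces to: isolate the hard case, then cite the theorem for the hard case. Your parenthetical suggestion that this classification ``rests on'' Theorems~\ref{thm:no-regges} and~\ref{thm:auto2} is not something the present paper asserts, and even if it were, you would then need to sketch \emph{that} derivation rather than cite the conclusion. As written, the proposal correctly locates the obstacle but does not surmount it.

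Two smaller issues. Your one-line treatment of $d=1$ (``follows from elementary linear algebra'') is too quick: $L_{1,n}$ is indeed a finite union of $(n-1)$-dimensional linear subspaces, but showing that $\rank(\E|_{V_i})<r$ on \emph{every} component forces $r=3$ and $K_3$ support is a combinatorial claim about how these subspaces sit in $\CC^N$, not pure linear algebra. Second, inside your ``remaining case'' sits the sub-case $|S|=D$, $S$ a $K_{d+2}$, but $r<D$; here $\E$ already has $K_{d+2}$ support, so to be consistent with the theorem you must show the image has dimension exactly $r$ (ruling this sub-case out), which is itself part of what \cite{loopsAlg} proves and not covered by your easy cases.
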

\begin{remark}
Theorem~\ref{thm:linImage} does not hold when 
$\LN$ is replaced by $M_{d,n}$. 
The linear automorphism group of
$\CS^{n-1}_d$ arises from  all the maps of the form
\[
	\S\mapsto \A^t\S\A 
\]  
with $\A$ an invertible $(n-1)\times (n-1)$ matrix.

Correspondingly, there are linear 
automorphisms of $M_{d,n}$ arising from matrices 
$\A$
that have
dense support. Thus, even if some $\E$ has $K_{d+2}$ support
the matrix $\E\A$ would not, and it 
could still have a small-dimensional image. 

If, in our motivating application, we observed sums of 
\emph{squared} edge lengths, we would have no hope to 
recover $\p$ without some combinatorial information.
\end{remark}

\subsection{Consistent $K_{d+2}$}
\label{sec:consist}

Now we can use these ingredients to upgrade Proposition~\ref{prop:ind}.
We want to show that:  
If we take $D$ values from our data set of path measurements, and they
are \defn{consistent} with the $D$
edge lengths of a $K_{d+2}$ in $\RR^d$, then in
fact \emph{they do arise}, up to scale, in this way.
Likewise, in the loop setting, if they 
``look like'' an appropriate \defn{canonical} set of  $D$
loops, 
then in
fact \emph{they do arise}, up to scale, in this way.

First we generalize our notion of 
measurement ensembles from Definition~\ref{def:ens}.

\begin{definition}\label{def:functionals}
A \defn{length functional} $\alpha$ is a linear mapping from $\LN$ to $\CC$.
We write its application to $\bl \in \LN$ as $\la \alpha,\bl \ra$.
In coordinates, it has the form $\sum_{ij} \alpha^{ij} l_{ij}$, with $\alpha^{ij} \in \CC$.
When $\p$ is a real configuration, and thus $l(\p)$ is well defined,
then we  also define
$\la \alpha, \p \ra:= \la \alpha, l(\p)\ra$.
Similarly, 
we can define a 
\defn{functional measurement ensemble} 
$\pmb{\gamma}$.
Then we define 
$\la \pmb{\gamma}, \bl \ra$ and
$\la \pmb{\gamma}, \p \ra$
to be the sequence that arises from the application of the measurement ensemble
$\pmb{\gamma}$ to $\bl$ and $l(\p)$ respectively.

We can apply to a length functional the adjectives: 
\defn{rational}, \defn{non-negative}, 
\defn{integer} or  \defn{whole} (non-negative integer) if all of its coordinates have these properties.
We say that an integer or whole length functional 
is \defn{$b$-bounded} if all of its coordinates have magnitudes  
no greater than $b$.

An edge multiset  naturally gives rise
to a unique whole length functional. Analogously, an edge multiset measurement ensemble gives
rise to a unique whole length ensemble matrix.

\end{definition}

\begin{theorem}
\label{thm:consist}
Let $d \ge 2$.
Let $\p$ be a   $n$-point configuration in $\RR^d$
such that $l(\p)$ is generic in $\LN$.
Suppose there is a sequence of 
$D$ non-negative rational functionals $\gamma_i$
such that $w_i=\la \gamma_i,\p \ra$ form a 
(necessarily non-negative)
vector
$\w:=(w_1,\dots,w_D)$ that has 
rational rank
$D$  and is in 
$L_{d,d+2}$.

Then
there must be a $(d+2)$-point subconfiguration 
$\p_T$ of $\p$ 
such that
$\la \pmb{\gamma}, \p \ra = \w = s\cdot l(\p_T)$, where $s$ is a positive
scale factor. 
\end{theorem}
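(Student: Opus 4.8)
The plan is to follow exactly the structure of the proof of Proposition~\ref{prop:ind}, but with every ingredient upgraded to the unsquared setting using the tools of Section~\ref{sec:maps}. Let $\E$ be the $D\times N$ matrix whose $i$-th row is the coordinate vector of the length functional $\gamma_i$, so that $\E(l(\p)) = \w$. Since each $\gamma_i$ is rational, $\E$ is a rational matrix; rescaling, we may take $\E$ to be integral, which only changes the eventual scale factor $s$. The hypothesis that $\w$ has rational rank $D$ means precisely that the $D$ numbers $w_i$ are rationally linearly independent, which forces $\E$ to have rank $D$ (a rational linear dependence among the rows of $\E$ would produce one among the $w_i$).

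The first main step is to invoke Theorem~\ref{thm:prin1}: since $l(\p)$ is generic in $\LN$ and $\E(l(\p)) = \w \in L_{d,d+2}$, we conclude $\E(\LN)\subseteq L_{d,d+2}$. In particular the image $\E(\LN)$ has dimension at most $\dim L_{d,d+2} = d(d+2)-C = D-1 < D = r$. Now apply Theorem~\ref{thm:linImage}: an $r$-deficient image with $r = D$ forces $\E$ to have $K_{d+2}$ support, i.e. all nonzero columns of $\E$ are indexed by edges lying inside a single $K_{d+2}$ subgraph on some vertex set $T$ of size $d+2$. So each functional $\gamma_i$ is supported on the edges of this one $K_{d+2}$, and we may regard $\E$ as a $D\times D$ matrix $\E_0$ (now square, still rank $D$, hence invertible) acting on the coordinates $L_{d,d+2}$, with $\E_0(l(\p_T)) = \w$.

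The second main step is to identify $\E_0$ as (a scalar times) a vertex-relabeling signed permutation, using Theorem~\ref{thm:prin2} and the automorphism classification. We have $\E_0(L_{d,d+2}) = \E_0 \pi_{\bar K}(\LN) \subseteq L_{d,d+2}$, where $\pi_{\bar K}$ is the edge-forgetting map onto the $K_{d+2}$; since $\E_0$ is invertible and $l(\p_T)$ is generic in $L_{d,d+2}$ (it is the image of the generic point $l(\p)$ under a coordinate projection realizing $L_{d,d+2}$), Theorem~\ref{thm:prin2} gives that $\E_0$ is a linear automorphism of $L_{d,d+2}$. By Theorem~\ref{thm:no-regges} (for $\{d,d+2\}\neq\{2,4\}$) or Theorem~\ref{thm:auto2} (for $\{d,d+2\}=\{2,4\}$), $\E_0$ is a scalar multiple of a signed permutation; and since each $\gamma_i$ is non-negative, $\E_0$ has non-negative entries, which rules out every genuine sign flip. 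In the exceptional case $L_{2,4}$, Theorem~\ref{thm:auto2} says the non-negative automorphisms form exactly the order-$24$ vertex-relabeling subgroup, so again $\E_0 = s\cdot \P$ for $\P$ a vertex-relabeling permutation and $s>0$. Therefore $\w = \E_0(l(\p_T)) = s\cdot \P(l(\p_T)) = s\cdot l(\p_{T'})$ where $\p_{T'}$ is the relabeled subconfiguration, and setting $\p_T := \p_{T'}$ we obtain $\la\pmb\gamma,\p\ra = \w = s\cdot l(\p_T)$ as claimed; the whole-number scaling in the final theorem statements will later follow from the fact that $\E_0$, being a non-negative integer matrix, forces $s$ to be a positive integer when the $\gamma_i$ arise from honest paths.

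I expect the main obstacle to be the bookkeeping around rank and support, specifically: correctly deducing from ``rational rank of $\w$ equals $D$'' that $\E$ has rank exactly $D$ over $\CC$ (one must be careful that rational rank of the output lower-bounds the rank of the rational matrix, since $l(\p)$ itself need not have full rational rank), and then cleanly passing from ``$\E$ has $K_{d+2}$ support'' (a statement allowing fewer than $D$ nonzero columns a priori) to ``$\E$ restricts to an invertible $D\times D$ map'' — this last uses rank $D$ together with the support bound to conclude there are exactly $D$ nonzero columns and they span the coordinates of one $K_{d+2}$. The automorphism-classification step is powerful enough that once $\E_0$ is known to be a square invertible automorphism with non-negative entries, the conclusion is essentially immediate; the genericity transfer $l(\p_T)$ generic in $L_{d,d+2}$ is routine via Lemma~\ref{lem:genPush} applied to the projection $\pi_{\bar K}$.
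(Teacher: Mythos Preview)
Your proposal is correct and follows essentially the same approach as the paper's proof: form the matrix $\E$ from the $\gamma_i$, use rational rank $D$ of $\w$ (via the contrapositive of Lemma~\ref{lem:depC}) to get $\rank\E = D$, apply Theorem~\ref{thm:prin1} for containment in $L_{d,d+2}$, apply Theorem~\ref{thm:linImage} for $K_{d+2}$ support, factor $\E = \A\pi_{\bar K}$ with $\A$ invertible, then use Theorem~\ref{thm:prin2} and the automorphism classification (Theorems~\ref{thm:no-regges} and~\ref{thm:auto2}) together with non-negativity to force $\A$ to be a positive scalar times a vertex-relabeling permutation. Your bookkeeping concerns in the last paragraph are handled exactly as the paper does, and your use of genericity of $l(\p_T)$ via Lemma~\ref{lem:genPush} is a slightly more explicit justification than the paper gives for invoking Theorem~\ref{thm:prin2}, but the logic is the same.
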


\begin{proof}
We can use the $D$ functionals, $\gamma_i$, as the rows of a matrix $\E$. We identify this matrix with the linear map from
$\CC^N$ to $\CC^D$ that it induces. 
%We restrict this map to its action on $\LN$.
This matrix $\E$ maps
$l(\p)$ to $\w$, which we have assumed to be in 
$L_{d,d+2}$.
Since $d\ge 2$, then from Theorem~\ref{thm:Lvariety}, $L_{d,n}$ is irreducible.

 From Theorem~\ref{thm:prin1} 
we see that 
$\E(\LN) \subseteq L_{d,d+2}$. 

From Lemma~\ref{lem:depC} and the assumed rational rank of
$\w$,
the rank of $\E$ must be $D$.
Since  of $\E(\LN)$ is not $D$-dimensional,
then from Theorem~\ref{thm:linImage},
$\E$ is $K_{d+2}$ supported.

Let $\pi_{{K}}$ be the edge  map 
where $K$ comprises the edges 
of this $K_{d+2}$, and where $K$ is ordered 
such that 
$\pi_{{K}}(L_{d,n}) = L_{d,d+2}$.
Let $\K$ be the matrix representing $\pi_{K}$.
Then $\E$ can be written in the form  
$\A \K$ where
$\A$ is a $D \times D$ non-singular and
non-negative rational matrix.

We have   $\A(L_{d,d+2}) = \A(\pi_{{K}}(L_{d,n})) 
= \E(L_{d,n})
\subseteq L_{d,d+2}$.
Thus, from Theorem~\ref{thm:prin2}, $\A$ must be the matrix of  a
real non-negative linear
automorphism of $L_{d,d+2}$. From 
Theorems~\ref{thm:no-regges} (for $d \ge 3$)
and \ref{thm:auto2} (for $d=2$)
$\A$
arises from 
a permutation on $d+2$ vertices and a positive global scale.
As $\E=\A\K$ for such an $\A$, 
there must exist an (ordered) $(d+2)$-point subconfiguration $\p_T$
of $\p$, such that 
$\la \pmb{\gamma}, \p \ra = \w = s\cdot l(\p_T)$.
\end{proof}

Theorem~\ref{thm:verLoop}, in 
Section~\ref{sec:canon} below, is the
generalization to the case of loop ensembles. 
 
\begin{remark}
\label{rem:scale}
If the $\gamma_i$ are 
whole valued, then $s$ must be an integer, 
greater than or equal to $1$
for any such
$\p_T$.
This is because, in the proof above, the matrices $\E$ and 
$\A$ will be integer valued.
\end{remark}

\begin{remark}\label{rem:rat-rank}
The rational rank $D$ hypothesis is essential as the following example in dimension $2$ shows.  Let 
$\gamma_i$ be any functional, and measure using the ensemble
$(3\gamma_i, 4\gamma_i, 
5\gamma_i, 5\gamma_i, 4\gamma_i, 3\gamma_i)$. 
These measurement values (with rational rank $1$) correspond
to a $K_4$ made by gluing ``345 triangles'' together, no matter what $\p$ is.
In fact, using an arithmetic 
construction from \cite{AE45}, we can even make infinitely many 
non-congruent rational rank $1$ measurement sets that have no 
repeated measurement values or $3$ collinear points.

When we are proving  ``global rigidity'' results in Section~\ref{sec:ugr}, 
the assumed trilateration sequence automatically gives rational rank $D$.
On the other hand, a reconstruction algorithm 
(see Section~\ref{sec:recon}) based on Theorem~\ref{thm:consist} 
will have to find the trilateration sequence as it goes.  
The examples here show that such an algorithm has to test rational rank.
\end{remark}

\subsection{Loop Setting}
\label{sec:canon}

We also wish to alter Theorem~\ref{thm:consist} so that 
it can be applied to the loop setting.
In particular, instead of looking for  
measurements of  $D$ edges of a $K_{d+2}$ we will
look for $D$ \emph{canonical}
measurements over a $K_{d+2}$. 
Indeed, we consider two such canonical measurements:
one to identify a $K_{d+2}$ ex-nihilo, and one to identify 
a $K_{d+2}$ using a known $(d+1)$-point subconfiguration along with 
$d+1$ additional measurements.

\begin{definition}
Given a single $K_{d+2}$, with ordered vertices and edges,
we can describe the  $D$
measurements described in Definition~\ref{def:contained}
using a fixed \emph{canonical}
$D\times D$ matrix $\N^d_1$. Each row represents
the edge multiplicities of one measurement loop.
For notational convenience, we order the rows of this matrix so that
each of the first $C$ rows is supported  only over the $C$ edges of the first
$d+1$ points.

In $2$ dimensions, 
we associate the columns of this matrix with the 
following edge
ordering:
$\{1,2\}, \{1,3\}, \{2,3\}, \{1,4\}, \{2,4\}, \{3,4\}$.
This then gives us
the following \defn{tetrahedral measurement matrix} that measures
three pings and three triangles
\ba
\N^2_1:=
\begin{pmatrix}
2&0&0& 0&0&0\\
0&2&0& 0&0&0\\
1&1&1& 0&0&0 \\
0&0&0& 2&0&0 \\
1&0&0& 1&1&0 \\
0&1&0& 1&0&1 
\end{pmatrix}.
\ea
See Figure~\ref{fig:pt} (bottom left) for the $2$-dimensional case.

Given an initial $K_{d+1}$, with ordered vertices and edges,
we can describe an ordered $D$
measurements describing the trilateration of an additional  vertex
off of the first $d+1$ vertices, as defined in Definition~\ref{def:ensemble},
using fixed a $D\times D$ matrix $\N^d_2$. 
The first $C$ rows measure the  edges of the initial $K_{d+1}$
subconfiguration, and the remaining $d+1$ rows 
measure the appropriate pings and triangles. 

In $2$ dimensions, this gives us
the following \defn{trilateration measurement matrix}
that measures three edges,
one ping, and two triangles
\ba
\N^2_2:=
\begin{pmatrix}
1&0&0& 0&0&0\\
0&1&0& 0&0&0\\
0&0&1& 0&0&0 \\
0&0&0& 2&0&0 \\
1&0&0& 1&1&0 \\
0&1&0& 1&0&1 
\end{pmatrix}.
\ea
See Figure~\ref{fig:pt} (bottom right) for the $2$-dimensional case.
\end{definition}

\begin{customthm}{\ref*{thm:consist}$'$}
\label{thm:verLoop}
With these definitions in place,
we can alter the condition
in Theorem~\ref{thm:consist} 
that $\w$ describes a point of $L_{d,d+2}$,  to the condition that
$\w$ describes a of 
point of $\N^d_i(L_{d,d+2})$. And we can generalize the
conclusion to read:
$\la \pmb{\gamma}, \p \ra = \w = s\cdot  \N^d_i(l(\p_T))$.
\end{customthm}
\begin{proof}
We follow the structure of the proof of Theorem~\ref{thm:consist}.
From Lemma \ref{lem:bij}, $\N^d_i(L_{d,d+2})$ is an irreducible 
variety.
Our loop assumptions give us 
$\w = \E(\bl) \in \N^d_i(L_{d,d+2})$ and thus from genericity of $\bl$, 
$\E(L_{d,n})\subseteq \N^d_i(L_{d,d+2})$. 

As above, $\E$ has $K_{d+2}$  support.
Let $\pi_{{K}}$ be the edge  map 
where $K$ comprises the edges 
of this $K_{d+2}$, and where $K$ is ordered 
such that 
$\pi_{{K}}(L_{d,n}) = L_{d,d+2}$.
Let $\K$ be the matrix representing $\pi_{K}$.
The matrix 
$\E$ can be written in the form  
$\B \K$ where
$\B$ is a $D \times D$ non-singular and non-negative rational
matrix.

We have   $\B(L_{d,d+2}) = \B(\pi_{{K}}(L_{d,n})) 
= \E(L_{d,n})
\subseteq \N^d_i(L_{d,d+2})$.
It follows 
from Theorem~\ref{thm:prin2}
that $\A:=(\N^d_i)^{-1}\B$ 
is the matrix of a linear automorphism of 
$L_{d,d+2}$. Thus we are left with determining the 
linear automorphisms with matrices $\A$ such that 
$\N^d_i\A=\B$ is non-negative.

For $d\ge 3$, from Theorem~\ref{thm:no-regges},
we see  
that this only occurs when $\A$ is 
a positive scale of a vertex relabeling.
For $d=2$, we need to explicitly
do a non-negativity check on our $\N^2_i\A$ over all 
linear automorphism matrices $\A$ of $L_{2,4}$,
as characterized in Theorem~\ref{thm:auto2} and described in detail
in~\cite{loopsAlg}.
This only requires checking $23040$ matrices,
for both $\N^2_1$ and $\N^2_2$
and has been done in the Magma computer algebra system~\cite{loopsAlg}. 
In both cases, non-negativity only arises for 
$\A$ that are positive scales of a vertex relabeling. 
%(See supplemental script.)

Thus $\E$ is of the form
$\N^d_i\A\K$ for such an $\A$, and the result follows.
\end{proof}

\subsection{Trilateration}
\label{sec:ugr}

Now we can upgrade our results in Section~\ref{sec:tri1} to prove
our main theorems.

\begin{lemma}\label{lem:partial-trilat2}
Let $d\ge 2$ and let $\p$ and $\q$ be configurations
of $n$ and $n'$ points respectively, 
so that 
%$\p$ is generic and 
$l(\p)$ is generic in 
$L_{d,n}$.  
We also suppose that no two points in $\q$
are coincident.
Let
$\pmb{\alpha}$ 
be an edge multiset measurement ensemble and
$\pmb{\beta}$ be a path or loop ensemble such that
$\la \pmb{\alpha}, \p\ra = \la \pmb{\beta}, \q\ra$.

Suppose
that we have two 
``already visited'' subconfigurations
$\p_V$ and $\q_{V'}$ with 
$\p_V =\q_{V'}$.

Suppose  we can 
find $\pmb{\delta}$, a subset of  
$d+1$ paths or loops in 
$\pmb{\beta}$, that can used to
nicely trilaterate 
some unvisited
vertex $\q_{i'}\in \q_{\bar{V'}}$ over 
some visited ${d+1}$ point
subconfiguration $\q_{R'}$ of $\q_{V'}$.

Then we can find an unvisited
$\p_{i}\in \p_{\bar{V}}$ such that 
the two subconfigurations $\p_{V\cup\{i\}}$
and $\q_{V'\cup\{i'\}}$ are equal.

\end{lemma}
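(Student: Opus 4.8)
The plan is to mirror the proof of Lemma~\ref{lem:partial-trilat}, replacing the squared-length tools ($M_{d,n}$, Proposition~\ref{prop:ind}, Lemma~\ref{lem:mds}) by their unsquared analogues ($L_{d,n}$, Theorem~\ref{thm:consist} resp.\ Theorem~\ref{thm:verLoop}, and Lemma~\ref{lem:noSim}), and dealing with the extra scale factor that the unsquared theory introduces.

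First I would let $\q_{T'}$ be the $(d+2)$-point subconfiguration consisting of the points of $\q_{R'}$ together with $\q_{i'}$, and set $\w$ to be the canonical $D$-vector of measurements over $\q_{T'}$: in the path setting $\w := l(\q_{T'})$, and in the loop setting $\w := \N^d_2\bigl(l(\q_{T'})\bigr)$, ordered so that the first $C$ coordinates are supported on the $K_{d+1}$ over $\q_{R'}$. Since $\q_{T'}$ is a genuine real configuration in $\RR^d$, we have $\w \in L_{d,d+2}$ (resp.\ $\w \in \N^d_2(L_{d,d+2})$). The key bookkeeping step is to exhibit $\w$ as $\E\bigl(l(\p)\bigr)$ for a $D \times N$ matrix $\E$ whose rows are non-negative whole length functionals: the last $d+1$ rows, which should reproduce the trilateration functionals $\pmb{\delta}\subseteq\pmb{\beta}$, can be taken to be the corresponding length functionals of $\pmb{\alpha}$, because $\la\pmb{\alpha},\p\ra=\la\pmb{\beta},\q\ra$ forces these values to agree with $\la\pmb{\delta},\q\ra$; the first $C$ rows should record the edge lengths within $\q_{R'}$, and since $\q_{R'}=\p_R$ is a subconfiguration of $\p_V=\q_{V'}$, those lengths are just coordinates of $l(\p)$ selected by an edge-forgetting map $\pi_{\bar{K}}$. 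The hypothesis that $\pmb{\delta}$ nicely trilaterates $\q_{i'}$ says exactly that the $D$ coordinates of $\w$ have rational rank $D$.

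Next I would apply Theorem~\ref{thm:consist} in the path case (resp.\ Theorem~\ref{thm:verLoop} in the loop case) to the functionals forming the rows of $\E$: since $l(\p)$ is generic in $L_{d,n}$, $\w$ has rational rank $D$, and $\w$ lies in $L_{d,d+2}$ (resp.\ $\N^d_2(L_{d,d+2})$), there must be a $(d+2)$-point subconfiguration $\p_T$ of $\p$ with $\w = s\cdot l(\p_T)$ (resp.\ $\w = s\cdot\N^d_2(l(\p_T))$) for some integer $s\ge 1$ (Remark~\ref{rem:scale}). The genuinely new step, compared with Lemma~\ref{lem:partial-trilat}, is to pin $s=1$. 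For this I restrict to the first $C$ coordinates: they say that the $(d+1)$-point subconfiguration $\q_{R'}$ is similar, with ratio $s$, to the corresponding subconfiguration $\p_{R_0}$ of $\p_T$. But $\q_{R'}=\p_R$ is itself a subconfiguration of $\p$, so $\p_R$ and $\p_{R_0}$ are two subconfigurations of $\p$ on $d+1\ge 3$ points that are similar; by Lemma~\ref{lem:noSim} (applicable since $l(\p)$ is generic) they must be the same points in the same order, which forces $s=1$ and $\p_{R_0}=\p_R$.

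Finally, with $s=1$ we have $l(\p_T)=l(\q_{T'})$, so by Lemma~\ref{lem:mds} these two $(d+2)$-point configurations are congruent via some $\sigma$. Since $\sigma$ carries $\q_{R'}$ onto $\p_{R_0}=\p_R=\q_{R'}$, and $\q_{R'}$ (being $d+1$ generic points) affinely spans $\RR^d$, $\sigma$ is the identity, hence $\p_T=\q_{T'}$. Letting $\p_i$ be the point of $\p_T\setminus\p_R$, we get $\p_i=\q_{i'}$; and if $\p_i$ had already been visited in $\p_V$, the same location would already be occupied by a point of $\q_{V'}$, contradicting both $\q_{i'}\in\q_{\bar{V'}}$ and the no-coincident-points assumption on $\q$. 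Thus $\p_{V\cup\{i\}}=\q_{V'\cup\{i'\}}$. I expect the scale-elimination step and the unified path/loop bookkeeping (choosing $\N^d_2$ versus the identity, and matching $\pmb{\delta}$ against a subsequence of $\pmb{\alpha}$) to be the only places requiring real care; the rest runs parallel to Lemma~\ref{lem:partial-trilat}.
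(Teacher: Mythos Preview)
Your proposal is correct and follows essentially the same route as the paper's proof: set up $\w=\N(l(\q_{T'}))$, invoke Theorem~\ref{thm:consist}/\ref{thm:verLoop} to produce $\p_T$ with $\w=s\cdot\N(l(\p_T))$, then use Lemma~\ref{lem:noSim} on the overlap $\q_{R'}=\p_R$ versus $\p_{R_0}$ to force the similarity to be the identity, and finish as in Lemma~\ref{lem:partial-trilat}. The only cosmetic difference is that you isolate the ``$s=1$'' conclusion as a separate step before invoking Lemma~\ref{lem:mds}, whereas the paper carries the unknown scale through as a similarity $\sigma$ and kills both the scale and the congruence ambiguity in one stroke by observing that $\sigma$ fixes the $d+1$ points of $\q_{R'}$.
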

\begin{proof}
Let $\q_{T'}$ be a subconfiguration consisting of, in some order, all
the points of $\q_{R'}$ along with $\q_{i'}$. 
Let $\w := \N(l(\q_{T'}))$,
with $\N=\I$ in the path setting and 
$\N=\N^d_2$ in the loop setting.
We have $\w \in L_{d,d+2}$. By assumption, $\w$  
has  rational rank $D$.

We have
$\la \pmb{\alpha}, \p\ra = \la \pmb{\beta}, \q\ra$ and
$\p_V = \q_{V'}$.  Thus, the assumption that 
$\pmb{\delta} \subseteq \pmb{\beta}$ nicely trilaterates 
an unvisited vertex in $\q$ implies that 
we can use $\pmb{\alpha}$ to 
find a measurement ensemble $\pmb{\gamma}$ 
of $D$ measurements so that $\la \pmb{\gamma}, \p\ra = \w$.
The ensemble $\pmb{\gamma}$ plays the role of $E$ in 
the proof of Lemma \ref{lem:partial-trilat}, and it 
is constructed in a similar way: from $d+1$ measurements
in $\pmb{\alpha}$ and $C$ distances imputed from a set of 
$d+1$ points in $\p_V$.

Theorem~\ref{thm:consist} in the path setting (or 
Theorem~\ref{thm:verLoop} in the loop setting) can be applied using 
$\p$ together with this 
$\pmb{\gamma}$ and $\w$.
This guarantees a 
$(d+2)$-point subconfiguration 
$\p_{T}$ of $\p$ such that
$\w = t \cdot \N(l(\p_{T}))$,
with $\N=\I$ in the path setting and  $\N=\N^d_2$ in the loop 
setting. 
%Here $t \ge 1$ is an integer scale factor (see Remark~\ref{rem:scale}). 
From Lemma~\ref{lem:mds}, 
we  
conclude that $\p_T$ and $\q_{T'}$ are related by a similarity.  

Since $\q_{R'}$ is a subconfiguration of $\q_{T'}$, 
$\q_{R'}$ must be similar to the associated 
subconfiguration 
of $\p_{T}$, which we may call
$\p_{R_0}$.
From genericity of $l(\p)$  in $L_{d,n}$
and Lemma~\ref{lem:noSim}, 
$\q_{R'}$ is similar to no other
subconfiguration of $\p$. 
Meanwhile, 
$\q_{R'}$ is a subconfiguration of $\q_{V'}$ and thus
also equal to  some subconfiguration 
$\p_{R}$
of $\p_{V}$.
Thus $\p_R$, $\p_{R_0}$ and $\q_{R'}$
must all be equal.
Since the similarity $\sigma$
that maps $\q_{T'}$ to $\p_{T}$
fixes the $d+1$ points of $\q_{R'}$, $\sigma$
must be the identity and 
$\p_T=\q_{T'}$.

Let $\p_{i}$ be the ``new'' point in $\p_{T}\setminus \p_{R}$,
which must be equal to $\q_{i'}$.
If $\p_{i}$ was already visited in $\p_{V}$, then the same position would have already been visited by some point in $\q_{V'}$. This together with the fact that no points are coincident in $\q$ would contradict 
the assumption that $\q_{i'}\in \q_{\bar{V'}}$. Thus 
$\p_{V\cup\{i\}}=\q_{V'\cup\{i'\}}$.
\end{proof}

Applying the above iteratively yields the following:

\begin{lemma}
\label{lem:punchline1}
Let  $d \ge 2$. 
Let $\p$ be a configuration of $n$ 
points 
such that $l(\p)$ is generic in $L_{d,n}$.
Let $\pmb{\alpha}$ be 
an edge multiset
measurement ensemble. 
Let $\v:=\la \pmb{\alpha},\p \ra$.

Suppose that there is a 
configuration $\q$ of $n'\ge d+2$ points
with no two points in $\q$ coincident.
And suppose that
$\pmb{\beta}$ is a path (resp. loop) measurement ensemble
that allows for trilateration,  and such that
$\pmb{\beta}$ trilaterates $\q$ nicely, 
and such that 
$\la \pmb{\beta}, \q \ra$ also equals $\v$.

Then, there is a sequence of indices $S$ so that, up to congruence,
$s\cdot \p_{S} = \q$, with $s$ an integer $\ge 1$.
Moreover, 
the vertices appearing in $S$ are exactly those that are 
endpoints of edges in the support of $\pmb{\alpha}$. 
Under the 
vertex relabeling implicit in $S$, we have 
$\pmb{\alpha} = s\cdot \pmb{\beta}$.
\end{lemma}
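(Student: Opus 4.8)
The plan is to prove Lemma~\ref{lem:punchline1} by iterating Lemma~\ref{lem:partial-trilat2}, exactly mirroring the structure of the proof of Lemma~\ref{lem:triBK} in the edge setting, but now tracking the scale factor $s$. First I would handle the \emph{base case}. Since $\pmb{\beta}$ allows for trilateration, it contains a base $K_{d+2}$ over a $(d+2)$-point subconfiguration $\q_{T'}$ of $\q$; in the path setting this is the $D$ edges of the $K_{d+2}$, and in the loop setting it is the canonical ping/triangle set encoded by $\N^d_1$. Set $\w := \N(l(\q_{T'}))$ with $\N = \I$ (path) or $\N = \N^d_1$ (loop), so $\w \in L_{d,d+2}$ or $\w \in \N^d_1(L_{d,d+2})$ respectively. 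Because $\pmb{\beta}$ trilaterates $\q$ nicely, $\w$ has rational rank $D$. Using $\la \pmb{\alpha},\p\ra = \la \pmb{\beta},\q\ra$, the values of $\w$ appear among the measurements of $\p$ via some ensemble $\pmb{\gamma}$ of size $D$ of non-negative rational (indeed whole) functionals, so we may apply Theorem~\ref{thm:consist} (path) or Theorem~\ref{thm:verLoop} (loop) to $\p$, which is legitimate since $l(\p)$ is generic in $L_{d,n}$. This yields a $(d+2)$-point subconfiguration $\p_T$ of $\p$ with $\w = s\cdot \N(l(\p_T))$ for some integer $s \ge 1$ (Remark~\ref{rem:scale}). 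By Lemma~\ref{lem:mds}, after factoring a similarity into $\p$, we get $\q_{T'} = s\cdot \p_T$ — note the scale enters here and is recorded once and for all.

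Next comes the \emph{inductive step}. Having fixed the similarity, I would set $V := T$, $V' := T'$ and maintain the invariant $\q_{V'} = s\cdot \p_V$. To reuse Lemma~\ref{lem:partial-trilat2} verbatim I would pass to the rescaled configuration $\p' := s\cdot\p$ (equivalently, carry the factor $s$ through each application). Since $\pmb{\beta}$ allows for trilateration, at each stage there is a trilateration sequence $\pmb{\delta}\subseteq\pmb{\beta}$ of $d+1$ functionals nicely trilaterating an unvisited $\q_{i'}$ over a visited $(d+1)$-point $\q_{R'}\subseteq\q_{V'}$, with the $D$ trilaterating values of rational rank $D$ by niceness. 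Applying Lemma~\ref{lem:partial-trilat2} (with $\p'$ in the role of $\p$, and the same scale bookkeeping) produces an unvisited $\p_i$ with $\q_{V'\cup\{i'\}} = s\cdot\p_{V\cup\{i\}}$. Iterating over all $n' - (d+2)$ trilateration steps exhausts $\q$; since $\pmb{\beta}$ visits every vertex of $\q$, we obtain an ordering $S$ of a subset of the vertices of $\p$ with $\q = s\cdot\p_S$ up to congruence, and (because $\q$ has no coincident points and $\p$ is generic, so all its inter-point distances are distinct) the support of $S$ is exactly the set of vertices appearing, which is forced to be the support of $\pmb{\alpha}$ by a counting/consistency argument as in Lemma~\ref{lem:triBK}.

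Finally, to recover the ensemble equality $\pmb{\alpha} = s\cdot\pmb{\beta}$: genericity of $\p$ forces all the coordinates of $l(\p)$ (and hence all the distinct entries of $l(\q) = s\cdot l(\p_S)$) to be rationally independent in a strong enough sense that each measurement value in $\v$ determines the whole functional that produced it uniquely — the argument here is the unsquared analogue of the ``no two distinct edges have the same length'' step in Lemma~\ref{lem:triBK}, applied coefficient-by-coefficient. More carefully, one notes that a non-negative whole functional applied to a generic $l(\p)$ cannot equal any other non-negative whole functional applied to it unless the two functionals differ by the vertex relabeling induced by $S$ together with the scale $s$; this is where the automorphism classification (Theorems~\ref{thm:no-regges},~\ref{thm:auto2}) and the non-negativity of path/loop functionals are doing their work behind the scenes. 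Matching $\pmb{\alpha}$ with $s\cdot\pmb{\beta}$ entry-by-entry then follows.

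The main obstacle I expect is the scale bookkeeping and the final ensemble-matching step rather than the inductive skeleton: Lemma~\ref{lem:partial-trilat2} is stated without an explicit scale in its conclusion (it concludes equality, not equality-up-to-scale), so one must be careful that the single global $s$ extracted at the base case is consistent with what each inductive step produces — in principle Theorem~\ref{thm:consist}/Theorem~\ref{thm:verLoop} could hand back a \emph{different} integer scale at a later step. The resolution is that once the first similarity (including its scale) is fixed and folded into $\p$, the overlap condition (each new $K_{d+2}$ shares a rigid $(d+1)$-point face with the already-reconstructed, already-scaled part, and by Lemma~\ref{lem:noSim} that face sits in only one place) pins the later scales to equal $1$ relative to the rescaled $\p$, i.e.\ they all agree with the original $s$. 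Making this rigidity-of-the-overlap argument airtight, and then deducing the coefficientwise identity $\pmb{\alpha} = s\cdot\pmb{\beta}$ from genericity, is the delicate part; everything else is a faithful transcription of the Section~\ref{sec:tri1} argument with $M_{d,n}$ replaced by $L_{d,n}$ and Proposition~\ref{prop:ind} replaced by Theorems~\ref{thm:consist} and~\ref{thm:verLoop}.
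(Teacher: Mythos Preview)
Your proposal follows essentially the same route as the paper: establish the base $K_{d+2}$ via Theorem~\ref{thm:consist} or Theorem~\ref{thm:verLoop}, absorb the resulting integer scale $s$ once (the paper factors $s$ into $\p$ and $1/s$ into $\pmb{\alpha}$; your $\p' := s\cdot\p$ is an equivalent device), and then iterate Lemma~\ref{lem:partial-trilat2}. Your worry about later steps producing a different scale is correctly resolved: once the base similarity is fixed, each subsequent application of Lemma~\ref{lem:partial-trilat2} concludes literal equality on the $(d+1)$-point overlap via Lemma~\ref{lem:noSim}, forcing the step's similarity to be the identity.

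The one place you overcomplicate things is the final ensemble-matching step. You invoke the automorphism classification (Theorems~\ref{thm:no-regges} and~\ref{thm:auto2}) ``behind the scenes'' to argue that two whole functionals giving the same value on a generic $\p$ must differ only by relabeling and scale. That is both unnecessary and not quite the right statement. The paper simply cites Theorem~\ref{thm:functional}: for generic $\p$, $\langle \alpha,\p\rangle = \langle \alpha',\p\rangle$ forces $\alpha = \alpha'$ as functionals, full stop. After the relabeling $S$ and the scale $s$ have already been absorbed, this gives $\pmb{\alpha} = \pmb{\beta}$ entry-by-entry directly, with no further appeal to the automorphism group.
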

\begin{proof}

For the base case, 
the nice trilateration assumed in $\pmb{\beta}$
guarantees a $K_{d+2}$ contained in  $\pmb{\beta}$
over a $(d+2)$-point  subconfiguration  $\q_{T'}$ of  $\q$.
Define 
$\w :=   \N(l(\q_{T'}))$
with $\N=\I$ in the path setting and  $\N=\N^d_1$ in the loop 
setting. From the niceness assumption, we can 
find such a $K_{d+1}$ so that these $w_i$ have rational rank $D$. 
We have 
$\w \in \N(L_{d,d+2})$.

Using the fact that
$\la \pmb{\alpha}, \p\ra = \la \pmb{\beta}, \q\ra$ 
we can apply 
Theorem~\ref{thm:consist} in the path setting, or 
Theorem~\ref{thm:verLoop} in the loop setting,
to this $\w$ and $\p$ 
and an appropriate subensemble $\pmb{\gamma}$ of $\pmb{\alpha}$.
We conclude 
that
there is a $(d+2)$-point 
subconfiguration  $\p_{T}$ of 
%$\q_{S'}$ 
$\p$ 
such that
$\w = s \cdot \N(l(\p_{T}))$,
with $\N=\I$ in the path setting and  $\N=\N^d_1$ in the loop 
setting.
Here $s \ge 1$ is an integer scale factor (see Remark~\ref{rem:scale}).
Also,  
from Lemma~\ref{lem:mds},
up to a similarity, we have
$\p_T = \q_{T'}$.

In order to use the simpler terminology of equality instead of 
similarity going forward, we will 
apply the similarity to $\p$
and replace 
each functional  $\alpha$ in $\pmb{\alpha}$ by 
$(1/s)\alpha$.
Then, to proceed inductively, assume that we have 
two ``visited'' subconfigurations such that 
$\p_{V}=\q_{V'}$.  Initially $V = T$ and $V' = T'$.
With this setup, we may now follow the trilateration of $\q$, 
iteratively applying
Lemma~\ref{lem:partial-trilat2} until we have visited all of  
$\q$.  At the end of the process, $\p_V = \q_{V'}$ with 
$\q_{V'}$ a reordering of $\q$.
Inverting this ordering, we have $\q=\p_{S}$, where $S$ is 
an ordering of the visited points in $\p$.

Since $l(\p)$ is generic in $L_{d,n}$, 
from Theorem~\ref{thm:functional}, no two distinct functionals can give
the same measurement. 
Since $\q$ is a subconfiguration of our generic $\p$, then 
no two distinct functionals can give the same measurement.
This means there is a \emph{unique} way for  $\v$ to arise from
$\p$ and a unique way for $\v$ to arise from $\q$.  Hence, 
after vertex relabeling from $S$, we have
$\pmb{\alpha}=
\pmb{\beta}$.
Since the vertices with endpoints in the 
support of $\pmb{\beta}$ correspond exactly to the points of $\q$, 
then the vertices that are endpoints of the edges in the  support of $\pmb{\alpha}$ are exactly $S$, as in the statement.
\end{proof}

Now we can prove one of our main theorems.

\begin{proof}[Proof of Theorem~\ref{thm:punchlineALG}]
First we remove from $\v$ the measurements which do not
appear in $\v^-$. We also remove the associated edge multisets from
$\pmb{\alpha}$.
Since $\p$ is generic, then $l(\p)$
is generic in $L_{d,n}$ from 
Theorem~\ref{thm:Lvariety}.
Then we simply apply 
Lemma~\ref{lem:punchline1}. 
\end{proof}

Next we want to use a genericity assumption on
$\p$ to automatically obtain 
genericity for $l(\q)$ in $L_{d,n}$
under the assumption that $n'=n$.

\begin{definition}
Let $n \ge d+1$.
A path or loop  measurement  ensemble
is \defn{infinitesimally rigid} in $d$ dimensions
if, starting at some (equiv. any) generic 
real 
configuration $\p$,
there are no differential motions of $\p$ that preserve 
all of the measurement values, except for differential congruences.
\end{definition}

\begin{remark}
We defined infinitesimal rigidity of an edge measurement ensemble 
in terms of a generic real of complex configuration $\p$.  For 
paths and loops, since we are using unsquared measurements, we  
restrict our attention to real configurations, so that the measurement 
map can be defined.
\end{remark}

\begin{lemma}
\label{lem:punchline2}
In dimension $d\ge 2$, let  $\p$ and $\q$ be two 
configurations with the same number of points $n \ge d+1$.
Suppose that 
$\pmb{\alpha}$
is a path or loop
measurement ensemble that is 
infinitesimally rigid in $d$ dimensions.
Suppose that 
$\pmb{\beta}$
is an edge multiset
measurement ensemble.
And suppose that  
$\v:=\la \pmb{\alpha},\p \ra =
\la \pmb{\beta},\q \ra$.

If $\p$ is a generic configuration, 
then $l(\q)$ is generic in $L_{d,n}$.
\end{lemma}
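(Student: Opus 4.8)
The proof will closely follow the proof of Lemma~\ref{lem:triBK2}, with the unsquared measurement variety $L_{d,n}$ playing the role of the squared one $M_{d,n}$, and with Theorem~\ref{thm:Lvariety} (irreducibility of $L_{d,n}$ for $d \ge 2$) substituting for the corresponding statement in Theorem~\ref{thm:Mvariety}. First I would introduce the length-ensemble analogues of the maps from Definition~\ref{def:maps}: for a measurement ensemble $\pmb\alpha$ with length functionals packed as the rows of a matrix, let $l_{\pmb\alpha}(\cdot) = \la\pmb\alpha,\cdot\ra$ be the composition of the Euclidean length map $l(\cdot)$ with the linear map defined by that matrix, and let $L_{d,\pmb\alpha}$ be the Zariski closure of its image over all complex $d$-dimensional configurations. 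Since $L_{d,n}$ is irreducible (here is where $d\ge 2$ is used) and $L_{d,\pmb\alpha}$, $L_{d,\pmb\beta}$ are images of it under linear maps, both are irreducible varieties defined over $\QQ$.

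The key dimension input is that $\pmb\alpha$ is infinitesimally rigid in $d$ dimensions, which by the definition just given means the differential of $l_{\pmb\alpha}$ at a generic configuration has rank $dn - C$; hence (by the same Sard's-theorem argument as in Proposition~\ref{prop:infind}/\ref{prop:infrig}) $L_{d,\pmb\alpha}$ has dimension $dn-C$. Meanwhile $L_{d,\pmb\beta}$ is the closure of the image of $L_{d,n}$ under a linear map, so $\dim L_{d,\pmb\beta} \le \dim L_{d,n} = dn-C$. The hypothesis $\v = \la\pmb\alpha,\p\ra = \la\pmb\beta,\q\ra$ puts $\v$ in both $L_{d,\pmb\alpha}$ and $L_{d,\pmb\beta}$. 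Now I claim $L_{d,\pmb\alpha} = L_{d,\pmb\beta}$: if not, $L_{d,\pmb\alpha}\cap L_{d,\pmb\beta}$ is a $\QQ$-variety of dimension strictly less than $dn-C$ by irreducibility of $L_{d,\pmb\alpha}$, so it contains no generic point of $L_{d,\pmb\alpha}$; but $\p$ generic forces $l(\p)$ generic in $L_{d,n}$ (Theorem~\ref{thm:Lvariety}) and hence $\v$ generic in $L_{d,\pmb\alpha}$ (by Lemma~\ref{lem:genPush}, since $L_{d,\pmb\alpha}$ is a linear image of $L_{d,n}$), a contradiction. So $\v$ is a generic point of $L_{d,\pmb\beta}$.

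Finally, $L_{d,\pmb\beta}$ is the Zariski closure of the image of $L_{d,n}$ under the linear map given by $\pmb\beta$, and the two varieties have the same dimension $dn-C$, so by Lemma~\ref{lem:genPull} the preimage of the generic point $\v$ under this linear map is a generic point of $L_{d,n}$; that preimage is exactly $l(\q)$, which is therefore generic in $L_{d,n}$ as claimed. The main obstacle — and the reason this lemma is placed after the companion-paper results — is verifying that the dimension bookkeeping genuinely holds in the unsquared setting: one needs $L_{d,n}$ to be irreducible of the expected dimension $dn-C$ (which fails for $d=1$, explaining that restriction) and one needs the notion of infinitesimal rigidity for a length ensemble to translate, via Sard, into the image-dimension statement for $L_{d,\pmb\alpha}$. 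Both are available: irreducibility and dimension from Theorem~\ref{thm:Lvariety}, and the image-dimension translation by the argument in the proof sketch of Proposition~\ref{prop:infind}, which, as noted in Remark~\ref{rem:complex-rigid}, works verbatim in the complexified setting and is insensitive to whether we take squared or unsquared lengths.
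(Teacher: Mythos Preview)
Your proposal is correct and follows essentially the same approach as the paper: the paper's own proof is the single sentence ``The proof follows exactly like that of Lemma~\ref{lem:triBK2}, using $L_{d,n}$ instead of $M_{d,n}$ and our linear measurement processes instead of $\pi_{\bar{G}}$ and $\pi_{\bar{H}}$,'' and you have simply (and accurately) spelled out what that entails. The only minor looseness is the phrase ``that preimage is exactly $l(\q)$''---the linear map given by $\pmb\beta$ need not be injective on $L_{d,n}$, but Lemma~\ref{lem:genPull} only requires that $l(\q)$ lie \emph{in} the preimage of the generic point $\v$, which it does.
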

\begin{proof}
Since $d\ge2$, we know that  $L_{d,n}$ is irreducible (Theorem~\ref{thm:Lvariety}), and so too is 
any linear image of it 
(see Definition \ref{def:cons}).
We define
$L_{d,\pmb{\alpha}}$ and 
$L_{d,\pmb{\beta}}$ 
to be the Zariski closures of s of $L_{d,n}$ under the 
linear maps corresponding to $\pmb{\alpha}$
and $\pmb{\beta}$, 
respectively. They are both defined over $\QQ$.
We  define the measurement map
$l_{\pmb{\alpha}}(\p)$
acting
on (real) configuration space
to be the map that takes
$\p$ to $\la {\pmb{\alpha}}, l(\p)\ra$.

If $\pmb{\alpha}$ is  infinitesimally rigid then, 
as in the proof of 
Proposition~\ref{prop:infind},
we see that the real dimension of the
 image $s$ of 
$l_{\pmb{\alpha}}(\p)$ over all real 
configurations
is $dn-C$ (constant rank theorem).  
The set $s$ is semi-algebraic (using quantifier elimination).
Let $V$ be the (complex) Zariski closure of $s$.
From Lemma~\ref{lem:zclose}, the complex dimension of
$V$ equals the real dimension of $s$.
Since we have 
\(
    s\subseteq L_{d,\pmb{\alpha}} 
\)
and $L_{d,\pmb{\alpha}}$ is Zariski closed, we have 
\(
V \subseteq L_{d,\pmb{\alpha}} 
\).
Using this containment and the fact that $L_{d,\pmb{\alpha}}$
a linear image of $L_{d,n}$ we obtain
\[
    dn - C \le \dim L_{d,\pmb{\alpha}} \le 
    \dim L_{d,n} = dn - C,
\]
which shows that equality holds throughout.
Using the fact that $L_{d,\pmb{\beta}}$
a linear image of $L_{d,n}$ we obtain
$\dim L_{d,\pmb{\beta}} \le dn - C$.

The proof then follows  like that of Lemma~\ref{lem:triBK2}.

\end{proof}

And now we can prove our other main theorem.

\begin{proof}[Proof of Theorem~\ref{thm:punchline}]
By assumption, $\pmb{\alpha}$ allows for trilateration.
A path or loop 
measurement ensemble that allows for 
trilateration is always infinitesimally
rigid.  (The ability to trilaterate makes  a generic real
$\p$  uniquely determined (globally rigid) from its labelled 
measurements. Thus, one cannot continuously change $\p$ while
keeping the measurements fixed (local rigidity).  
If the ensemble was not infinitesimally rigid
at a generic $\p$, then using the 
proof from~\cite{asimow}, it could not be locally rigid.)
So from Lemma~\ref{lem:punchline2} we know that $l(\q)$ is generic in $L_{d,n}$.
We next argue that this trilateration must be nice:
During a trilateration step, the $D$ functionals
are linearly independent. So from Lemma~\ref{lem:Cdep}
and the genericity of $\p$, the $D$ measurement values
must have rational rank $D$.

So we can now apply
Lemma~\ref{lem:punchline1}
with the roles of $\p$ and $\q$ reversed as well as the roles of
$\pmb{\alpha}$ and $\pmb{\beta}$.

\end{proof}

\section{Reconstruction Algorithm}
\label{sec:recon}
A straightforward iterative application of 
Theorem~\ref{thm:punchlineALG} 
leads to a 
real-computation algorithm for 
reconstructing an unknown generic configuration $\p$ from 
an unlabeled path or loop measurement ensemble
that allows for trilateration.
The algorithm  performs a brute force combinatorial search for 
a trilateration sequence, testing validity along the way using 
rational rank tests and 
Cayley-Menger determinants.  
This generalizes the TRIBOND algorithm from~\cite{dux2}.  
The TRIBOND algorithm only works on edge-length ensembles,
while ours works in the path and loop setting.

Theorem~\ref{thm:punchlineALG}
% (see also Theorem~\ref{thm:consist})
tells us 
that, assuming genericity of $\p$,
we can apply trilateration greedily
without backtracking.
At each step, 
if we can interpret some of our measurement data
as being consistent with a nicely trilaterated point set $\q$, then this
is a certificate of correctness for that step,
up to scale. Meanwhile, if we also assume that
our data arose from an ensemble that 
allows for trilateration (which must be nice by Lemma~\ref{lem:Cdep}), then this means that
by searching, we will be able to find such a certifiably nicely trilaterated 
configuration $\q$, and with the same number of points as $\p$.
Moreover (see below), we can also determine the correct scale.

For the path (resp.~loop) algorithm we have the following specification.

\vspace{0.25 cm}

\noindent
{\bf Input:} dimension, bounce bound, and real-valued
data set, $(d, b, \v)$.\\
\noindent
{\bf Assumption:} Data set $\v$ arises from some generic configuration
$\p$ of $n$ points (for some $n$) in $\RR^d$, under some
$b$-bounded path (resp.~loop) ensemble that allows for trilateration.\\
\noindent
{\bf Output:} A configuration $\q$, where $\q$ is related to $\p$
through a vertex relabeling and a Euclidean congruence.

\vspace{0.25 cm}

\begin{theorem}
\label{thm:alg}
There is an algorithm in a real computational model
that solves the generic path or loop reconstruction problem.
Its running time is polynomial in $|\v|$ and $b$, and exponential in $d$.
\end{theorem}

The algorithm first exhaustively searches for all ordered 
$D$-tuples of $\v$ that describe
$K_{d+2}$ subconfigurations of $\p$. Each of these is treated as a 
``candidate base'': We do not know at first which of these candidates will ultimately 
form the base of a complete trilateration sequence. 
Each candidate base is then grown into an expanding sequence of 
``candidate subconfigurations" by finding new points that connect to an 
existing candidate subconfiguration through a trilaterating set of 
$d+1$ edges or loops. 

To expand a candidate subconfiguration by one point, we search 
exhaustively over all subsets of $d+1$ points in the configuration
and subsets of $d+1$-tuples of values in $\v$ to find a valid 
trilateration step.  
Besides the Cayley-Menger determinant test, 
a valid step is also required to  
have the appropriate
rational rank. 
If the values of $\w$ are not
rationally independent, 
Lemma~\ref{lem:Cdep} implies that 
there is a rational relation
on its underlying functionals.
In the $b$-bounded
setting,  
Lemma~\ref{lem:depC} then implies that the
coefficients describing this relation are bounded integers. 
This bound is polynomial in $b$.
Thus, we can simply do an exhaustive search
for possible integer relations satisfying this 
bound.

If a new point is found, we extend the candidate
subconfiguration and we continue  searching exhaustively over
the extended configuration. If an already-localized point 
is found, we ignore it and continue to search for a trilateration 
step that produces a new point.  If, after trying all the 
possibilities, we do not find a new point, 
the candidate subconfiguration is maximal, 
and we stop searching for this subconfiguration.  
The whole algorithm stops when no candidate subconfiguration 
can be expanded any further. 

Theorem~\ref{thm:consist} only gives us a $K_{d+2}$
configuration up to a scale. 
Fortunately, as we grow a candidate base, 
as in the proof of Lemma~\ref{lem:punchline1},
we will only be able to do so 
using the same shared scale.
Still, since our algorithm uses a number of 
candidate bases, it may
be the case that we reconstruct various subsets of $\p$ at various scales. 
In more detail, there may be a subensemble of 
$\pmb{\alpha}$ 
that is actually of the form of a trilateration sequence,
but with each path (resp.~loop) measured $s$ times per measurement.
During the trilateration, we treat
these as if they were unscaled, which is effectively
equivalent to scaling the length of each underlying edge by $s$. This results in the
reconstruction of a candidate configuration $s\cdot\p$ that is a
scaled up version of the true configuration. 

The existence of at least one correctly scaled reconstruction is
guaranteed by the trilateration assumption on the underlying
$\pmb{\alpha}$.
Thus, at the end of the trilateration process,
we 
identify the true configuration as the reconstructed subset
with the smallest scale among all configurations with
the same maximal number of points. This configuration, $\q$, will be equal (up to vertex relabeling and congruence) to the one true configuration $\p$.

\subsection{Ideas for Efficiency}
There are a few interesting ideas for reducing the cost of the 
the rational rank test.

First of all, testing for rational rank can be done
much more efficiently 
using the basis reduction algorithm for integer relation testing~\cite{ir1,ir2}.
This algorithm works under a real computational model that includes a 
constant-time floor operation.
Roughly speaking, given a  vector $\w$ 
of 
$D$ real numbers and an integer bound $B$, 
the integer relation testing algorithm will
either report that there is no 
``small'' integer linear relation on $\w$ having  integer coefficients
with vector norm less than $B$, or it will provide
a ``medium-sized''
integer relation with vector norm smaller than 
$2^{D/2} B$.
If the algorithm reports that no small relation exists, then
we can conclude that our $\w$ has the necessary rational rank.
If the algorithm returns a 
medium sized relation we know that we do not have the necessary rational rank.
(In fact, we know 
from Lemma~\ref{lem:depC}, that there must also be a small integer relation.)
For fixed $d$, this algorithm has running
time $O(\log(B))$, which for us is $O(\log(b))$.

Secondly,
in the two dimensional setting, we have shown in~\cite{loopsAlg}
that the only linear subspaces of dimension $3$
or higher that are contained in $L_{2,4}$ are those making up  its singular locus.
When a linear map has rank less than $6$, 
then, from Theorem~\ref{thm:linImage},   of $L_{2,n}$ will be a Zariski open subset
of a linear subspace of $\CC^6$. Under the genericity assumption on $\p$, that space will be contained in $L_{2,4}$. 
Thus, if we know or can verify that a non-singular point
$\w$ of $L_{2,4}$
has rational rank at least $3$, then it must
have rational rank $6$. This condition can be verified on any three values of $\w$.

Finally, the rational rank test can  be completely omitted under 
certain assumptions about the measurement process of the 
generic configuration $\p$.
For example, in the loop setting with dimension 3,
suppose we assume that $\pmb{\alpha}$
consists only of pings and triangles, with each
passing through $\p_1$.
This is not an 
unreasonable assumption for our 
signal processing scenario described in Section~\ref{sec:intro}.
Under this assumption, it can be shown that if
$\w$ consists of $10$ distinct values, then it has rational rank $10$, 
and thus no explicit test is needed.
% \begin{question}
% In $3$ dimensions, 
% are there weaker assumptions on $\pmb{\alpha}$
% that allow for a more efficient rational rank test?
% \end{question}

\subsection{Practical Application}\label{sec:accuracy}

As is often the case in computational geometry,
if we implement an algorithm that relies on exact real 
computation with a digital computer, we obtain an 
approximation that does not come with  the same guarantees.
We are currently experimenting with
such methods in ongoing research, and there are recent experiments by others~\cite{nam2020super} in the loop setting with dimension 3 and with $\pmb{\alpha}$ comprising pings and triangles through $\p_1$. But to date, it remains unclear to us what conditions are required for a numerical implementation to succeed with approximate loop lengths.

In the unlabelled edge-only setting, the greedy
TRIBOND algorithm has been extended to the more robust
(but expensive) 
LIGA algorithm~\cite{dux1,dux2}. 
LIGA 
ranks its reconstruction candidates to minimize error. 
Moreover,
LIGA allows for
backtracking during the reconstruction process.  For non-generic 
or approximate input, incorrect
early decisions can often be detected as erroneous using
subsequent data; they can then be undone. 
  Such ideas could prove useful in the path or loop
setting as well.

\appendix

\section{Algebraic Geometry Preliminaries}\label{sec:geometry}

We summarize the needed definitions and facts about
complex algebraic varieties. For more see~\cite{harris}.
%Extra useful facts that we will not formally need will be in 
%brackets.

In this section, $N$ and $D$ will represent arbitrary numbers.

\begin{definition}
A (complex embedded affine) \defn{variety} 
(or \defn{algebraic set}), $V$,
is a (not necessarily strict)
subset of $\CC^N$, for some $N$,
that is defined by the simultaneous
vanishing of a finite set of polynomial equations 
with coefficients in $\CC$
in the 
variables $x_1, x_2, \ldots, x_N$ which are associated with the 
coordinate axes of $\CC^N$.
We say that $V$ is \defn{defined over} $\QQ$ if it can 
be defined by polynomials with coefficients in $\QQ$.

% A variety can be stratified as a union of a finite number of
% complex analytic submanifolds of $\CC^N$.

A finite union of varieties is a variety.
An arbitrary intersection of varieties is a variety.

A  variety $V$ is \defn{reducible} if it is the proper union of two
varieties $V_1$ and $V_2$. 
(Proper means that $V_1$ is not contained in $V_2$ and 
vice-versa.)
Otherwise (assuming it is non-empty) it is called
\defn{irreducible}.
A non-empty variety has a unique decomposition as a finite proper
union of its
maximal irreducible subvarieties called \defn{components}.
(Maximal means that a component cannot be 
 contained in a larger  irreducible 
subvariety of $V$.)

A variety $V$ has a well-defined (maximal) \defn{dimension} $\Dim(V)$, 
which will agree with the largest $D$ for which there
is an open subset of~$V$, in the standard  topology, 
that is a $D$-dimensional complex submanifold of $\CC^N$.

Any strict subvariety $W$ of an 
irreducible variety $V$ must be of strictly lower dimension.

 The \defn{local dimension} $\Dim_\genericpoint(V)$ at a point $\genericpoint$ in a variety $V$
 is the
 dimension of the highest-dimensional irreducible component of $V$ that contains $\genericpoint$.
A point $\genericpoint$ is called
\defn{smooth} in 
a variety $V$ if the dimension of the 
Zariski tangent space (see~\cite{harris})
equals $\Dim_\genericpoint(V)$. Otherwise $\genericpoint$ is called
\defn{singular} in $V$.
The \defn{locus} of singular points of $V$ is denoted $\sing(V)$.
A smooth point has 
an open neighborhood in ~$V$, in the standard  topology, 
that is a $\Dim_\genericpoint(V)$-dimensional complex submanifold of $\CC^N$.

\end{definition}

\begin{remark}
\label{rem:idealtheoretic}
We will say that an ideal in $L[X_1,\ldots,X_N]$
is \defn{defined over} a field $K \subset L$ if it can be generated by polynomials
with coefficients in $K$.

In the present setting, there is no difference between set-theoretic and ideal-theoretic
notions of the field of definition of a variety.
Suppose that $V$ is cut out as $V(J')$ for some ideal $J'$ 
of 
$\CC[X_1,\ldots,X_N]$
that is 
defined over $\QQ$. Since $\CC$ is algebraically closed, we know that $J:=I(V)=\sqrt{J'}$ (Nullstellensatz).
Meanwhile, since $\QQ$ is a perfect field, the radical ideal $\sqrt{J'}$ 
is also defined over $\QQ$ (see~\cite[tag 030V]{stacks-project}).
\end{remark}

\begin{definition}
\label{def:cons}
A \defn{constructible set} $S$ is a set that can be defined using
a finite number of Boolean set operations over a finite number of
varieties.
We say that $S$ is \defn{defined over} $\QQ$ if the varieties can 
be defined by polynomials with coefficients in $\QQ$.

The \defn{Zariski closure} of $S$ is the smallest variety $V$ containing it.
The set $S$ has the same dimension as its Zariski closure $V$.
If $S$ is defined over $\QQ$, then so too is $V$ (see Theorem~\ref{thm:closeDef} below).

 of a variety $V$ of dimension $D$
under a polynomial map $m$ is a constructible set $S$ of dimension
at most $D$.
If $V$ is irreducible, then so 
too is the Zariski closure of $S$.
(We say that $S$
is \defn{irreducible}.)
If $V$ and $m$ are
defined over $\QQ$, then so too is 
$S$~\cite[Theorem 1.22]{basu}.
\end{definition}

Next we define the notion of generic points in a variety. The motivation
is that nothing algebraically special (and which 
is expressible with rational 
coefficients)
is allowed to happen at such points.
Thus, any such algebraic property holding at one generic point must hold at all generic points.
\begin{definition}
\label{def:gen}
A point in an irreducible variety or
constructible set $V$ defined over $\QQ$ is called 
\defn{generic} if its coordinates do not satisfy any algebraic equation 
with coefficients in $\QQ$ besides those that are satisfied by every point
in $V$.  

% The set of generic points has full measure 
% %and are  (standard topology) dense 
% in $V$.

A generic real point in $\RR^N$ as in Definition~\ref{def:genR}
is also a generic point in $\CC^N$,
considered as a variety, as in the current definition.

\end{definition}

\begin{lemma}
\label{lem:genPush}
Let  $C$ and $M$ be irreducible affine varieties,
and $m$ be a polynomial map, all defined over $\QQ$, 
such that the Zariski closure of $m(C)$ is equal to $M$.
If there exists a polynomial $\phi$,
defined over $\QQ$, that does not vanish identically
over $M$ but does vanish at $m(\p)$ for some 
$\p \in C$, then 
there is a polynomial $\psi$,
defined over $\QQ$, that does not vanish identically
over $C$ but does vanish at $\p$.

Thus,
if $\p \in C$ is generic in $C$, 
then $m(\p)$ is generic in $M$.
\end{lemma}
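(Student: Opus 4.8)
The plan is to take $\psi := \phi \circ m$, the pullback of $\phi$ along the map $m$, and to verify it has the required three properties. First I would note that since $\phi$ has coefficients in $\QQ$ and each component function of $m$ is a polynomial with coefficients in $\QQ$, the composite $\psi$ is again a polynomial with coefficients in $\QQ$. Second, $\psi(\p) = \phi(m(\p)) = 0$ by the hypothesis that $\phi$ vanishes at $m(\p)$, so $\psi$ vanishes at $\p$.

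The only point requiring an argument is that $\psi$ does \emph{not} vanish identically on $C$. Suppose, for contradiction, that it did. Then $\phi(m(\x)) = 0$ for every $\x \in C$, i.e.\ $\phi$ vanishes on the image $m(C)$. The zero locus $\{\y : \phi(\y) = 0\}$ is Zariski closed, so it contains the Zariski closure of $m(C)$, which by hypothesis equals $M$. Hence $\phi$ vanishes identically on $M$, contradicting the hypothesis on $\phi$. This proves the first assertion of the lemma; note that irreducibility of $C$ and $M$ has not yet been used.

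For the ``Thus'' clause, let $\p \in C$ be generic. If $m(\p)$ were not generic in $M$, then by Definition~\ref{def:gen} there would exist a polynomial $\phi$ defined over $\QQ$ that vanishes at $m(\p)$ but does not vanish identically on $M$. The first part of the lemma then supplies a polynomial $\psi$ defined over $\QQ$ that vanishes at $\p$ but not identically on $C$, contradicting the genericity of $\p$ in $C$. (Irreducibility of $C$ and $M$ enters only because the notion of generic point is defined for irreducible varieties.) I do not expect any serious obstacle here: the entire proof is the pullback construction together with the observation that the vanishing set of a polynomial is Zariski closed, so the only thing to be careful about is invoking ``the Zariski closure of $m(C)$ is $M$'' at exactly the right step.
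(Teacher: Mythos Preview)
Your proof is correct and is essentially identical to the paper's: both set $\psi = \phi\circ m$, observe it is defined over $\QQ$ and vanishes at $\p$, and show it is not identically zero on $C$ by noting that otherwise $m(C)$, and hence its Zariski closure $M$, would lie in the zero locus of $\phi$. Your write-up is in fact slightly more complete, since you spell out the contrapositive for the ``Thus'' clause, which the paper leaves implicit.
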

\begin{proof}
The polynomial is simply $\psi(\x) = \phi(m(\x))$.  This $\psi$ is clearly defined over $\QQ$.
The subset of $X \subseteq M$ for which $\phi$
vanishes is proper, by assumption, and a subvariety, by construction.
Thus ,  $m(C)$, cannot be contained in $X$:  Indeed, if it were, then the Zariski 
closure of $m(C)$ would be a subset of $X$, and then not all of $M$.  Hence, there is an 
$\x \in C$ so that $\phi(m(\x))\neq 0$. Thus $\psi$ is not identically zero over $C$.
\end{proof}

\begin{lemma}
\label{lem:genPull}
Let  $L$ and $M$ be irreducible affine varieties of the same dimension,
and $s$ be a polynomial map, all defined over $\QQ$,
 such that the Zariski closure of $s(L)$ equals $M$.
If there exists a polynomial $\psi$,
defined over $\QQ$, that does not vanish identically
over $L$ but does vanish at some $\bl \in L$, then 
there is a polynomial $\phi$,
defined over $\QQ$, that does not vanish identically
over $M$ but does vanish at $s(\bl)$.

Thus,
if $\bl \in L$ is not generic in $L$, then $s(\bl)$ is not generic in $M$.
\end{lemma}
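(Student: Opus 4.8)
The plan is to run the argument of Lemma~\ref{lem:genPush} ``in the other direction'': rather than pulling a bad polynomial back through $s$, I will push forward the bad locus that $\psi$ cuts out and take its Zariski closure to produce $\phi$ downstairs. Concretely, first set $Z := \{\x \in L : \psi(\x) = 0\}$. Since $\psi$ does not vanish identically on the irreducible variety $L$, the set $Z$ is a \emph{proper} subvariety of $L$, so $\Dim(Z) < \Dim(L)$. Note $Z$ is defined over $\QQ$, being cut out inside $L$ by the single equation $\psi$, all over $\QQ$.

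Next I consider the image $s(Z)$. By the standard bound on dimensions of images under polynomial maps (recorded in the algebraic geometry preliminaries), $s(Z)$ is constructible with $\Dim(s(Z)) \le \Dim(Z) < \Dim(L) = \Dim(M)$, where the final equality is the hypothesis. Let $W$ be the Zariski closure of $s(Z)$; then $W$ is a subvariety of $M$ of dimension strictly less than $\Dim(M)$, hence a \emph{proper} subvariety of the irreducible $M$. Moreover $W$ is defined over $\QQ$: since $Z$ and $s$ are over $\QQ$, so is $s(Z)$, and hence so is its Zariski closure $W$. Because $W \subsetneq M$ with $M$ irreducible and both defined over $\QQ$, some polynomial $\phi$ over $\QQ$ vanishes on all of $W$ but not identically on $M$ (if every $\QQ$-polynomial in a defining set of $W$ vanished on $M$, then $M \subseteq W$).

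It then remains only to check that $\phi$ is the witness we want. From $\psi(\bl) = 0$ we get $\bl \in Z$, hence $s(\bl) \in s(Z) \subseteq W$, so $\phi(s(\bl)) = 0$; and $\phi$ does not vanish identically on $M$ by construction. This proves the main assertion. The closing ``Thus'' is just the contrapositive packaging: if $\bl$ is not generic in $L$, Definition~\ref{def:gen} supplies such a $\psi$, and the $\phi$ just produced shows $s(\bl)$ is not generic in $M$.

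The only step requiring genuine care — and the reason the equal-dimension hypothesis cannot be dropped — is the dimension count: without $\Dim(L) = \Dim(M)$ one can only conclude $\Dim(s(Z)) \le \Dim(L) - 1$, which need not be less than $\Dim(M)$. Since $\Dim(M) \le \Dim(L)$ holds automatically (as $\overline{s(L)} = M$), the hypothesis forces $\Dim(L) = \Dim(M)$, and this is exactly what makes $W$ a \emph{proper} subvariety of $M$. Everything else is bookkeeping about fields of definition and the elementary fact that a proper subvariety of an irreducible variety is cut out by a polynomial not identically vanishing on it.
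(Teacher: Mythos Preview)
Your proof is correct and follows exactly the route the paper sketches: take the vanishing locus $Z$ of $\psi$ in $L$, push it forward under $s$, observe that its closure $W$ is a proper $\QQ$-subvariety of $M$ by the dimension count, and extract $\phi$ from a defining set of $W$. The paper's own argument is only a three-line sketch of this same idea, so you have simply filled in the details (and correctly identified where the equal-dimension hypothesis is used).
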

\begin{proof}[Sketch]
Since $L$ is irreducible, the vanishing locus of $\psi$ must be 
of lower dimension. This subvariety
must map under $s$ into to a lower-dimensional subvariety of $M$ (defined
over $\QQ$). This guarantees the existence of an appropriate $\phi$.
\end{proof}

With our notion of generic fixed, we can prove the two
principles of Section~\ref{sec:intro}.

\begin{lemma}
\label{lem:bij}
If $\A$ is a bijective linear map 
on $\CC^N$, then  under $\A$
of a variety $V$ is a variety
of the same dimension.
If $V$ is irreducible, then so too is this image.
\end{lemma}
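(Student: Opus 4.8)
The plan is to use the fact that a bijective linear map $\A$ on $\CC^N$ is a polynomial automorphism of affine space: both $\A$ and $\A^{-1}$ are given by linear (hence polynomial) coordinate functions. First I would show $\A(V)$ is a variety. Write $V$ as the common zero set of polynomials $f_1,\dots,f_k$ with coefficients in $\CC$. A point $\y\in\CC^N$ lies in $\A(V)$ if and only if $\A^{-1}(\y)\in V$, i.e. if and only if $f_i(\A^{-1}(\y))=0$ for all $i$. Since $\A^{-1}$ is linear, each $g_i:=f_i\circ\A^{-1}$ is again a polynomial in the coordinates of $\CC^N$, so $\A(V)$ is precisely the common zero set of $g_1,\dots,g_k$, hence a variety.

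Next, for the dimension claim, I would observe that $\A$ restricts to a bijection $V\to\A(V)$ whose inverse is the restriction of $\A^{-1}$, and that $\A$ is a homeomorphism of $\CC^N$ in the standard topology which, being a biholomorphism of $\CC^N$, carries complex submanifolds to complex submanifolds of the same dimension. Using the characterization of $\Dim$ recalled in the preliminaries as the largest $D$ for which $V$ contains a standard-open subset that is a $D$-dimensional complex submanifold of $\CC^N$, applying $\A$ shows $\Dim(\A(V))\ge\Dim(V)$, and applying $\A^{-1}$ gives the reverse inequality, so $\Dim(\A(V))=\Dim(V)$.

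Finally, for irreducibility I would argue by contraposition. Suppose $\A(V)$ is reducible, say $\A(V)=W_1\cup W_2$ with $W_1,W_2$ varieties neither contained in the other. Applying the first step to the linear bijection $\A^{-1}$ shows $\A^{-1}(W_1)$ and $\A^{-1}(W_2)$ are varieties, and $V=\A^{-1}(W_1)\cup\A^{-1}(W_2)$; since $\A^{-1}$ is a bijection, neither $\A^{-1}(W_i)$ is contained in the other, so this union is proper and $V$ is reducible. Hence if $V$ is irreducible, so is $\A(V)$.

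I do not expect a genuine obstacle here: everything reduces to the observation that $\A$ and $\A^{-1}$ pull back polynomials to polynomials and act as homeomorphisms of the ambient space. The only point requiring mild care is to make sure the notions of properness of a decomposition and of dimension match those fixed in the algebraic-geometry preliminaries, both of which transfer immediately because $\A$ is a bijection of $\CC^N$.
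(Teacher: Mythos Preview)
Your proposal is correct and follows essentially the same approach as the paper: both arguments hinge on the observation that $\A^{-1}$ is again linear, so $\A(V)$ is the preimage of $V$ under $\A^{-1}$ and hence cut out by the pulled-back polynomials $f_i\circ\A^{-1}$. The only minor differences are cosmetic: for dimension the paper invokes the general fact that polynomial maps cannot raise dimension (applied to both $\A$ and $\A^{-1}$) rather than the submanifold characterization, and for irreducibility the paper implicitly relies on the preliminary fact that polynomial images of irreducible sets are irreducible, whereas you give the direct contrapositive argument.
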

\begin{proof}
Let $S := \A(V)$. 
Since $\A$ is bijective, then
there is also a map $\A^{-1}$ acting on $\CC^N$,
and $S$ must be the inverse image
of $V$ under this map.
Thus, by pulling back the defining equations of $V$ through
this map, we see that $S$ must be a variety.

The dimension  follows 
from the fact that maps cannot raise dimension, and our map
is invertible.
\end{proof}

\begin{proof}[Proof of Theorem~\ref{thm:prin1}]
Suppose $\E(V)$ does not lie in $W$. Then the preimage
$\E^{-1}(W)$, which is a variety defined over $\QQ$, does not
contain $V$, and the inclusion of $\genericpoint$ in this preimage
would render $\genericpoint$  a non-generic point of $V$.  
\end{proof}

\begin{proof}[Proof of Theorem~\ref{thm:prin2}]
From Theorem~\ref{thm:prin1}, $\A(V) \subseteq V$.
From Lemma~\ref{lem:bij}, $A(V)$ is an algebraic 
subvariety of $V$ of the same dimension, which 
from the assumed irreducibility must be $V$ itself.
\end{proof}

For completeness, we next prove the following theorem.  Its  
proof was suggested by Brian Osserman.
\begin{theorem}
\label{thm:closeDef}
Let $S$ be a constructible set defined over $\QQ$.
Then its Zariski closure is also defined over $\QQ$. 
\end{theorem}

The following material is a bit more technical.
\begin{lemma}\label{lem:ext}
    Let $S$ be a constructible set in $\CC^N$ which is defined over $\QQB$.
    Let $V$ be the Zariski closure of $S$. Let $J$ be the ideal of $V$.
    Let $s$ be the $\QQB$ points of $S$. Let $v$ be the Zariski closure of $s$ in $\QQB^N$.
    Let $j$ be the ideal of $v$.
    Then $J=j^*$, the complex extension of $j$.
\end{lemma}
\begin{proof}
As a constructible set, $S = \bigcup_i (U_i \cap Z_i)$ where $U_i$ is Zariski open and $Z_i$ is Zariski closed.
After splitting into irreducible components, each $Z_i$ can be assumed to be irreducible.
From~\cite[Tag 038I]{stacks-project}, each $Z_i$ is still defined over $\QQB$.
We can also assume that $Z_i \subsetneq U_i$ (otherwise, we can just remove this term from the union).
Thus we can write $S=\bigcup_i (Z_i-X_i)$ where $Z_i$ and $X_i$ are both Zariski closed,
$Z_i$ is irreducible and $Z_i \nsubseteq X_i$.
A finite union operation will commute with complex extensions and so 
henceforth, we can just look at the single component case where
$S=Z-X$. 

Let $I(Z)$ and $I(X)$ be the associated  ideals
in
$\CC[X_1,\ldots,X_N]$, both defined over $\QQB$.
$I(Z)$ is prime and $I(X)$ is radical. Moreover, by assumption, $I(X) \nsubseteq I(Z)$.
Under these assumptions, 
$S$ is a non-empty, Zariski open subset of an irreducible $Z$, so it is dense in $Z$;
the Zariski closure $V$ of $S$ will just be $Z$, with ideal $J=I(Z)$.

Now we restrict $S$, $Z$, and $X$ to $\QQB^N$, where we 
switch to lower case and write
$s=z-x$.
Associated with $z$ and $x$, we have the 
ideals $I(z)$ and $I(x)$ in $\QQB[X_1,\ldots,X_N]$.

Next we show that $I(Z)$ is the complex extension of $I(z)$ (and likewise for $I(X)$):
Since $I(Z)$ is defined over $\QQB$, we can pick a set  of polynomials with coefficients in $\QQB$ that
generate the complex ideal $I(Z)$. These same polynomials define an ideal, $m$
in $\QQB[X_1,\ldots,X_N]$.
The ideal $m$ is still prime (as we are making the field smaller).
Since they share the
same generating polynomials, the complex extension
of $m$ is $I(Z)$.
We now have $z=V(m)$, and since $\QQB$ is closed, from the  Nullstellensatz  we have $I(z)=m$, which proves our claim.
Moreover $z$ must be  irreducible.
A similar argument shows that under complex extension, $I(x)$ 
(which is likewise radical) becomes
$I(X)$. 

Meanwhile, since for  the extensions we have $I(X) \nsubseteq I(Z)$, 
we must also have $I(x) \nsubseteq I(z)$. This gives us $z \nsubseteq x$.
Thus $s$ is a non-empty, open subset of an irreducible $z$, and so it is dense in $z$;
as above, the Zariski closure $v$ of $s$ will just be $z$, with ideal $j=I(z)$.
Since $I(Z)$ is the complex extension of $I(z)$,
we see that $J$ is the complex extension of $j$.
\end{proof}

\begin{lemma}\label{lem:aut}
Let  $s$ be a set of points in $\QQB^N$ which is
invariant under the Galois group
$\operatorname{Aut}(\QQB/\QQ)$.
Then the Zariski closure $v$ of $s$ in $\QQB^N$
is defined over $\QQ$.
\end{lemma}
\begin{proof}
Suppose that $f_1,\ldots,f_n$   cut out $v$. 
For each $i$, consider all the Galois conjugates of $f_i$.
Each $f_i$ has finitely many coefficients, and each of them
lies in a finite-degree extension of $\QQ$.  Hence 
each of the $f_i$ has a finite Galois orbit.  The 
union of these conjugates is a finite collection of polynomials $g_1,\ldots,g_m$.

Each $g_i$ vanishes on $s$, by the assumed Galois invariance
of $s$.  Thus each $g_i$ is in $I(s)$. By the definition of Zariski closure,
we have $I(s)=I(v)$. And so we have each $g_i$ vanishing on $v$.

Now, for each $1\le i\le m$, we define $h_i = e_i(g_1,\ldots,g_m)$, 
where $e_i$ is the $i$th elementary symmetric polynomial.
Each $h_i$ is Galois invariant, since applying a Galois automorphism 
to $h_i$ will  permute the $g_j$, and $e_i$ is symmetric.
As Galois invariant polynomials, the $h_i$ are defined over $\QQ$.

Since all $g_i$ vanish over $v$, we know that $v \subseteq V(\{h_i\})$.
For the other direction, let $p \notin v$. Then one of our  $f_i$ does not vanish at $p$. So there is some specific
$i_0$ so that 
$g_{i_0}(p)\neq 0$. 

Recall that Vieta's formula says that, for a univariate polynomial $P(x)$, if 
\[
  P(x)=  (x - r_1)\cdots (x - r_m) = x^m + a_{m-1}x^{m-1} + \cdots + a_0
\]
then 
\[
    e_i(r_1, \ldots, r_m) = (-1)^{i}a_{m-i}
\]

Getting back to our setting, 
for each $1\le k\le m$, let us fix the numbers 
%$r_k := e_k(g_1(p),...g_m(p))$.
$r_k := g_k(p)$.
Suppose that for all $i$, we had   $e_i(r_1, \ldots, r_n) = 0$, then in Vieta's formula,
we would have $P(x)=x^m$, which would mean that all of the roots 
$(r_1, \ldots, r_n)$
of $P(x)$ were $0$.
But we have at least one $r_{i_0}\neq 0$, a contradiction.
So there must be some  ${i_1}$ so that
 $e_{i_1}(r_1,\ldots,r_m) \neq 0$. 
But this means that 
$
h_{i_1}(p)
=
e_{i_1}(g_1(p),\ldots,g_m(p))
\neq 0$.
Thus $v\supseteq V(\{h_i\})$ 
and so $v = V(\{h_i\})$. 
Since the $h_i$ are defined over $\QQ$, this
establishes the lemma.
\end{proof}
\begin{remark}
The lemma can also be proven in a related but more abstract fashion by first establishing that the Zariski closure, $v$, is Galois 
invariant and then directly appealing to \cite[Tag 038B]{stacks-project}.
\end{remark}

\begin{proof}[Proof of Theorem~\ref{thm:closeDef}]
Let us consider $s$, the $\QQB^N$ restriction of $S$, along with its Zariski closure $v$.
Since $s$ is defined using the vanishing and non-vanishing of polynomials defined over $\QQ$, 
the set $s$ must be invariant to $\operatorname{Aut}(\QQB/\QQ)$. From Lemma~\ref{lem:aut}, its Zariski closure  $v$ in
$\QQB^N$ must be cut out (set theoretically) as $V(j')$ for some ideal $j'$ 
of 
$\QQB[X_1,\ldots,X_N]$
that is 
defined over $\QQ$. 
As in Remark~\ref{rem:idealtheoretic}, since $\QQB$ is closed,
$j:=I(v)$ is equal to $\sqrt{j'}$, its radical ideal in  $\QQB[X_1,\ldots,X_N]$,
which is also defined over $\QQ$.
Meanwhile from Lemma~\ref{lem:ext}, the ideal $J$ of the (complex) Zariski closure of $S$ is just
the complex extension of $j$. Thus $J$ too must be defined over $\QQ$.
\end{proof}

\begin{lemma}
\label{lem:zclose}
Let $s$ be a real semi-algebraic set. 
Let $V:=V(s)$ be its complex Zariski closure.
Then the complex dimension of $V$ equals the real dimension of $s$.
\end{lemma}
\begin{proof}
We say that $s$ is irreducible if its real Zariski closure is irreducible.
If $s$ is not irreducible, we can write it is as a finite proper union
of irreducible semi-algebraic sets $s_k$. We have $\Dim(s) = \max_k(\Dim(s_k))$,
and so we can work over each component separately. Thus, in what follows, we will 
assume that $s$ is irreducible.

Let $i := i(s)$ be the ideal of real polynomials vanishing on $s$, and
$v := v(i(s))$ be the real Zariski closure. (We use lower case $v$ and $i$ to
make it clear that this is all done in the real setting.)
Let $I := I(s)$ be the ideal of complex  polynomials vanishing on $s$, and
$V := V(I(s))$ be the complex Zariski closure. 
As described in 
\cite[Section 2.8]{boch}, the real dimension of $s$
equals the real dimension of $v$.
Meanwhile, from \cite[Proof of Lemma 6]{Whitney},
the ideal $I$ equals $i^*$, the complex extension of $i$.
From~\cite[Proof of Lemma 9]{Whitney},
for any radical real ideal $j$, 
the maximal Jacobian rank 
of $V(j^*)$ must equal
the maximal Jacobian rank of 
of $v(j)$.
Since $v$ is irreducible, then 
from~\cite[Lemma 7]{Whitney}, so too is $V$.
Since $v$ and $V$ are both irreducible, this maximal Jacobian rank
gives us the real (resp. complex) codimensions of $v$ and $V$.
\end{proof}

\section{Rational Functionals and Relations}\label{sec:rational:funcs}
In this section, we prove some generally useful facts about
rational functionals and relations acting on generic point configurations $\p$.

\begin{theorem}
\label{thm:functional}
Let $\bl$ be a generic point in $\LN$ with  $d \ge 2$,
and let $\alpha$ be a rational length functional. 
Suppose $\la \alpha, \bl\ra=0$, then 
$\alpha=0$. Likewise (due to linearity), 
if $\la \alpha,\bl\ra=\la \alpha',\bl\ra$, then $\alpha=\alpha'$.

Similarly, let $\p$ be a generic configuration in $\RR^d$ with $d\geq 2$.
Suppose $\la \alpha, \p \ra=0$, then 
$\alpha=0$. Likewise, if 
$\la \alpha, \p\ra=\la \alpha',\p\ra$, then $\alpha=\alpha'$.
\end{theorem}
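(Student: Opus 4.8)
The plan is to derive all four assertions from a single core fact: no nonzero rational length functional vanishes at a generic point of $L_{d,n}$. The ``$=\alpha'$'' versions then follow immediately by applying this to $\alpha-\alpha'$, which is again a rational functional; and the configuration versions follow from Theorem~\ref{thm:Lvariety}, which for $d\ge 2$ says that $l(\p)$ is generic in $L_{d,n}$ whenever $\p$ is generic in $\RR^d$, together with the convention $\la\alpha,\p\ra=\la\alpha,l(\p)\ra$ of Definition~\ref{def:functionals}. So it suffices to treat the case of a generic $\bl\in L_{d,n}$ with $\la\alpha,\bl\ra=0$ and conclude $\alpha=0$.

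The first step is to observe that $\bl\mapsto\la\alpha,\bl\ra=\sum_{ij}\alpha^{ij}l_{ij}$ is a polynomial (indeed a linear form) with rational coefficients, and that, since $d\ge 2$, Theorem~\ref{thm:Lvariety} guarantees $L_{d,n}$ is irreducible and defined over $\QQ$. By the defining property of a generic point of such a variety, a rational polynomial vanishing at one generic point must vanish on the whole variety. Hence $\la\alpha,\cdot\ra$ vanishes identically on $L_{d,n}$; equivalently, $L_{d,n}$ is contained in the hyperplane $\{\sum_{ij}\alpha^{ij}l_{ij}=0\}$. What remains is to show this forces $\alpha=0$, i.e.\ that $L_{d,n}$ is not contained in any hyperplane, i.e.\ that it linearly spans $\CC^N$.

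For that I would use the sign-flip symmetry of $L_{d,n}$: since $L_{d,n}$ is the preimage of $M_{d,n}$ under coordinatewise squaring, negating any single coordinate is a linear automorphism of $L_{d,n}$ (as noted just before Theorem~\ref{thm:no-regges}). Fix an edge $kl$ and let $\bl'$ be obtained from $\bl\in L_{d,n}$ by negating the $kl$-coordinate; then both $\la\alpha,\bl\ra$ and $\la\alpha,\bl'\ra$ vanish, and subtracting gives $2\alpha^{kl}l_{kl}=0$ for every $\bl\in L_{d,n}$. Taking $\bl=l(\p_0)$ for any generic real configuration $\p_0$ (which lies in $L_{d,n}$ and has all coordinates nonzero) forces $\alpha^{kl}=0$, and ranging over all edges $kl$ gives $\alpha=0$.

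The only step with any content is the last one — that $L_{d,n}$ spans $\CC^N$ — and the point to be careful about there is that one cannot simply transport the (easy) analogous fact for $M_{d,n}$ through the squaring map, since that map is nonlinear; the sign-flip trick is exactly what lets the argument run directly on $L_{d,n}$. It is also worth recording why the hypothesis $d\ge 2$ is essential: for $d=1$ the variety $L_{1,n}$ is reducible (for instance $L_{1,3}$ is a union of four hyperplanes), each component lies in a hyperplane, and the statement genuinely fails, since a rational functional vanishing on one component need not be zero.
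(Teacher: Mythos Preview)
Your proof is correct and follows essentially the same approach as the paper's: both use genericity to conclude that $\alpha$ vanishes identically on $L_{d,n}$, then exploit the coordinate sign-flip symmetry of $L_{d,n}$ together with the existence of points having all nonzero coordinates to force $\alpha=0$. The only cosmetic difference is that you isolate each coefficient $\alpha^{kl}$ via a single sign-flip and subtraction, whereas the paper flips signs simultaneously to align every nonzero term with the sign of its coefficient and obtain a strictly positive value of $\la\alpha,\cdot\ra$.
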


Recall from Theorem~\ref{thm:Lvariety} that, assuming $d\geq 2$, 
$\LN$ is irreducible, hence it has generic points.
Additionally, when $\p$ is a  generic  configuration, then 
$l(\p)$ is generic in 
$\LN$.

\begin{proof}
The equation $\la \alpha, \bl\ra=0$ describes an algebraic equation over $\LN$ 
with coefficients in $\QQ$ and that vanishes at $\bl$. 
Genericity of $\bl$ implies that $\alpha$ vanishes
over all of $\LN$.

Suppose, for a contradiction, that $\alpha\neq 0$.
First observe that there are points $\x$ in $\LN$
that have all real and non-zero coordinates.  Because
$\LN$ is symmetric under sign-negation, given 
$\alpha$, we can construct from $\x$ a point 
$\x'$ in $L_{d,n}$ so that the coefficient $\alpha^{ij}$ and 
$\x'_{ij}$ have the same sign for all $ij$
with $\alpha^{ij}\neq 0$.
But then $\langle \alpha,\x'\rangle > 0$, which is a
contradiction.  Hence $\alpha = 0$.

\end{proof}
\begin{remark}
When $d=1$, there 
can be  generic  configurations $\p$ such that
$\la \alpha, \p\ra=0$ with $\alpha \neq 0$.
For example, suppose $n=3$, and let
$\alpha_{12}=1, \alpha_{23}=1, \alpha_{13}=-1$.
Then, $\la \alpha, \p\ra=0$, whenever we have the order $\p_1 \leq \p_2 \leq \p_3$, 
or the reverse order $\p_1 \geq \p_2 \geq \p_3$, 
and
$\la \alpha, \p\ra\neq 0$ otherwise. 
In this case,  $L_{1,3}$ is reducible,
and we have an equation that vanishes identically on one component of the variety
but not on the others.
\end{remark}

The following is useful to tell when a set of rational functionals
is linearly dependent.

\begin{lemma}
\label{lem:depC}
Let $\p$ be a configuration, $\alpha_i$ a sequence of $k$ 
rational functionals, and $v_i := \la \alpha_i,\p \ra$. Suppose that
the functionals $\alpha_i$ 
are linearly dependent. Then, there
is a linear 
dependence that can be expressed as 
$\sum_i c^i \alpha_i=0$, where the
coefficients $c^i$ are rational, not all vanishing.
Moreover, this gives us the 
relation $\sum_i c^i v_i =0$ with the same coefficients.  If the functionals $\alpha_i$ are
integer and $b$-bounded, then we can find such coefficients $c^i$ that are integers, 
bounded in magnitude by $(k')^{k'/2}b^{k'}$.
\end{lemma}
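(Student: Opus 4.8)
The plan is to separate the statement into a purely linear-algebraic part and a bookkeeping part about bounding the coefficients. First I would observe that the linear dependence of the $\alpha_i$ as elements of the $\QQ$-vector space $\QQ^N$ of rational functionals immediately yields a nontrivial relation $\sum_i c^i \alpha_i = 0$ with $c^i \in \QQ$: the $\alpha_i$ span a subspace of $\QQ^N$, and since they are (by hypothesis) linearly dependent over $\CC$, they are also linearly dependent over $\QQ$ (a matrix with rational entries has the same rank over $\QQ$ as over $\CC$, so its kernel already contains a rational vector). Applying such a relation coordinatewise and pairing with $\bl = l(\p)$ (or more precisely pairing the identity of functionals with the fixed point $\p$) gives $\sum_i c^i v_i = \la \sum_i c^i\alpha_i, \p\ra = \la 0, \p\ra = 0$ with the same coefficients; this is just linearity of $\la\,\cdot\,,\p\ra$, so no genericity of $\p$ is needed here.

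The remaining and only substantive point is the magnitude bound when the $\alpha_i$ are integer-valued and $b$-bounded. Here I would argue as follows. Pass to a maximal linearly independent subset of the $\alpha_i$; after reindexing, say $\alpha_1,\dots,\alpha_{r}$ are independent and $\alpha_{r+1}$ is dependent on them, with $r \le k-1$. Then $\alpha_{r+1}$ lies in the span of $\alpha_1,\dots,\alpha_r$, and we may extract an $(r{+}1)\times(r{+}1)$ submatrix $B$ of the $(r{+}1)\times N$ integer matrix with rows $\alpha_1,\dots,\alpha_{r+1}$ that has rank $r$ (its rows are dependent, its first $r$ rows independent). The kernel of $B$ is one-dimensional and, by Cramer's rule, is spanned by the vector of signed $r\times r$ minors of $B$. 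These minors are integers, and each is a sum of $r!$ terms each of which is a product of $r$ entries of magnitude at most $b$; a cleaner bound comes from Hadamard's inequality, but the slicker route to the stated $b^{k-1}$ is to expand a single minor along one row and induct, or simply to note that each such $r\times r$ determinant of entries bounded by $b$ is bounded by $b^{r} \cdot (\text{something})$ — so I would instead use the standard fact that an $r\times r$ integer matrix with entries bounded by $b$ has determinant bounded by $r!\,b^r$ and then argue that the specific structure here (many entries $0$ in a $b$-bounded whole functional over edges) lets one do better, OR, more honestly, I expect the paper means the bound $b^{k-1}$ to follow from choosing $B$ cleverly so that it is essentially triangular after column operations. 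In any case, clearing to a primitive integer vector only decreases magnitudes, so the resulting $c^i$ are integers bounded by the determinant bound.

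The main obstacle is pinning down the exact constant $b^{k-1}$ rather than a weaker $k!\,b^{k-1}$-type bound; the existence of \emph{some} integer relation and the relation $\sum c^i v_i = 0$ are routine. I would resolve this by taking the relation supported on a minimal dependent subset $\{\alpha_{i_0},\dots,\alpha_{i_m}\}$ with $m \le k-1$, writing the coefficients as the $m\times m$ minors (Cramer) of the coefficient matrix of the first $m$ of these functionals expressed in a basis, and bounding each such $m\times m$ determinant of $b$-bounded integers; since $m \le k-1$, and by choosing the minimal dependent set the relevant determinant is of size at most $(k-1)\times(k-1)$, one gets the magnitude bound $b^{k-1}$ after a careful (but elementary) expansion. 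If that constant turns out to need a mild extra factor, the downstream applications (Remark~\ref{rem:rat-rank}, Section~\ref{sec:rr}) are unaffected since they only use that the bound is polynomial in $b$ for fixed $d$.
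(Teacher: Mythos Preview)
Your approach is essentially the same as the paper's: both take a minimally dependent subset of the $\alpha_i$ and extract integer coefficients as $r\times r$ minors (Cramer/cofactor expansion) of the resulting integer matrix. The paper additionally splits into the two cases $r<N$ and $r=N$, using cofactor expansion in the first case and the adjugate formula in the second, but the underlying mechanism is identical to yours.

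Your hesitation about the constant is well-founded, and in fact you are being more careful than the paper. The paper simply asserts that the relevant $k'\times k'$ minors are ``bounded in magnitude by $b^{k'}$'' and gives no further argument; but a $k'\times k'$ integer matrix with entries in $[-b,b]$ can have determinant as large as $k'!\,b^{k'}$ (and exceeds $b^{k'}$ already for $k'=2$), so the stated bound $b^{k-1}$ is off by a combinatorial factor. There is no hidden structural reason in the paper's argument that removes this factor. As you correctly note, the applications in Section~\ref{sec:rr} only use that the bound is polynomial in $b$ for fixed $k$, so the slip is harmless; your version with the honest $(k-1)!\,b^{k-1}$ (or Hadamard) bound is the correct statement.
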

\begin{proof}
Let $k'<k$ be the dimension of the span of the $\alpha_i$.

i) Let us look at the case $k' < \edgecard$.

Pick a subset of the $\alpha_i$ that is minimally linearly dependent
with size 
$k'+1$.
Let us use these as the $k'+1$ rows of a matrix $\M$ with
$\edgecard$ columns.
Each
of its minors of size $k'+1$ must
vanish. 

Pick $k'$ columns that are linearly independent. Append to these, 
one column made up of $k'+1$ variables. The condition that the
determinant of this $(k'+1)\times (k'+1)$ matrix vanishes gives us
a non-trivial linear homogeneous equation in the variables.
In the $b$-bounded setting, Hadamard's bound implies that the 
coefficients of this equation have magnitude at most 
$(k')^{k'/2}b^{k'}$.
As every column of $\M$ is in the span of our chosen $k'$ columns,
the entries in each column of $\M$  must satisfy this equation.
Thus we have found a rational relation on the $k'+1$ rows of $\M$, giving us
a rational relation on the $\alpha_i$.

ii) Let us look at the case $k' = \edgecard$.

Pick a subset of the $\alpha_i$ of size $\edgecard$ that is 
linearly independent.
Let us use these as the rows of a square non-singular matrix $\M$.
Pick one more functional $\beta$ from the $\alpha_i$. Let us think of 
$\beta$ as a row vector of length $\edgecard$.
Since $\beta$ is in the span of our selected rows, we have
$[\beta \, {\rm adj}(\M)] \M = \beta [\det(\M)]$.
Here ``adj'' denotes the adjugate matrix.
This gives us a rational relation between the rows of 
$\M$ and $\beta$, with the coefficients
in brackets above.
Again in the $b$-bounded setting, the coefficients
are bounded in magnitude by $(k')^{k'/2}b^{k'}$.

In both cases i) and ii), the relation on the $v_i$ follows
immediately.
\end{proof}

\begin{lemma}
\label{lem:Cdep}
Let $\p$ be a generic configuration in two or more dimensions.
Let $\alpha_i$ be a  sequence  of $k$ 
rational functionals.
Let $v_i := \la \alpha_i,\p \ra$.
Suppose there is a sequence 
of $k$
rational coefficients $c^i$, not all zero, such that 
$\sum_i c^i v_i=0$.
Then, there is a linear dependence in the functionals $\alpha_i$.
\end{lemma}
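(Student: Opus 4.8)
The statement is essentially the converse of Lemma~\ref{lem:depC}: it says that for generic $\p$ (in dimension $d \ge 2$), a rational linear relation among the \emph{measurement values} $v_i = \la \alpha_i, \p\ra$ can only happen if there is already a rational linear relation among the \emph{functionals} $\alpha_i$ themselves. The plan is to argue the contrapositive. Suppose the $\alpha_i$ are linearly \emph{independent}. I want to conclude that the $v_i$ are rationally linearly independent, i.e.\ no nontrivial rational relation $\sum_i c^i v_i = 0$ holds.

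\medskip

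So assume $\sum_i c^i v_i = 0$ with rational $c^i$ not all zero. Set $\beta := \sum_i c^i \alpha_i$. This is a rational length functional, and by linearity $\la \beta, \p\ra = \sum_i c^i \la \alpha_i, \p\ra = \sum_i c^i v_i = 0$. Since $\p$ is a generic configuration in $\RR^d$ with $d \ge 2$, by Theorem~\ref{thm:Lvariety} the point $l(\p)$ is generic in $\LN$, and $\LN$ is irreducible. Now apply Theorem~\ref{thm:functional}: a rational functional $\beta$ with $\la \beta, \p\ra = 0$ at a generic $\p$ must be the zero functional, so $\beta = 0$. But $\beta = \sum_i c^i \alpha_i$, so this is exactly a nontrivial rational linear dependence among the $\alpha_i$, contradicting our assumption of independence (in the contrapositive reading) — and in the form stated in the lemma, it directly exhibits the desired linear dependence in the $\alpha_i$.

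\medskip

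In short, the proof is just: form the functional $\beta = \sum_i c^i \alpha_i$, observe it kills the generic $\p$, invoke Theorem~\ref{thm:functional} to conclude $\beta = 0$, and read off the dependence. There is no real obstacle here; the only thing to be careful about is that Theorem~\ref{thm:functional} genuinely requires $d \ge 2$ (the remark following it shows $d = 1$ fails, and indeed $L_{1,3}$ is reducible, so $\p$ cannot be generic in the relevant sense), which is why the hypothesis ``two or more dimensions'' appears in the statement. One should also note that $\beta$ having rational coefficients is immediate since the $c^i$ and the $\alpha^{ij}$ are all rational.

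\begin{proof}
Suppose there are rational coefficients $c^i$, not all zero, with $\sum_i c^i v_i = 0$. Define the length functional $\beta := \sum_i c^i \alpha_i$, which has rational coefficients since each $\alpha_i$ and each $c^i$ does. By linearity of the pairing,
\[
\la \beta, \p \ra = \sum_i c^i \la \alpha_i, \p \ra = \sum_i c^i v_i = 0.
\]
Since $d \ge 2$, Theorem~\ref{thm:Lvariety} tells us that $\LN$ is irreducible and that $l(\p)$ is generic in $\LN$; equivalently, $\p$ is a generic configuration in the sense needed to apply Theorem~\ref{thm:functional}. That theorem now gives $\beta = 0$, i.e.\ $\sum_i c^i \alpha_i = 0$ with the $c^i$ rational and not all zero. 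This is a linear dependence among the functionals $\alpha_i$.
\end{proof}
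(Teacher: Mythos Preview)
Your proof is correct and follows essentially the same approach as the paper: form $\beta = \sum_i c^i \alpha_i$, use linearity to see $\la \beta, \p\ra = 0$, and invoke Theorem~\ref{thm:functional} to conclude $\beta = 0$. The paper's version is slightly terser (it does not explicitly cite Theorem~\ref{thm:Lvariety}), but the argument is identical.
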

\begin{proof}
\ba 
0&=& \sum_i c^i v_i \\
&=& \sum_i c^i \la \alpha_i, \p \ra \\
&=& \la \sum_i c^i \alpha_i, \p \ra 
\ea
Then, from Theorem~\ref{thm:functional}, $\sum_i c^i \alpha_i$ must
be the zero functional.
\end{proof}

\end{document}